  \def\Spnr{Sp(d,\R)}
  \def\Gltwonr{GL(2d,\R)}
\newcommand{\ft}{Fourier transform}
\newtheorem{tm}{Theorem}[section]
\newtheorem{lemma}[tm]{Lemma}
\newtheorem{theorem}{Theorem}[section]
\newtheorem{corollary}[theorem]{Corollary}
\newtheorem{definition}[theorem]{Definition}
\newtheorem{example}[theorem]{Example}
\newtheorem{proposition}[theorem]{Proposition}%%
\newtheorem{remark}[theorem]{Remark}
\newcommand{\beqa}{\begin{eqnarray*}}
\newcommand{\eeqa}{\end{eqnarray*}}
\newcommand{\field}[1]{\mathbb{#1}}
\newcommand{\bR}{\field{R}}        %  real numbers
\newcommand{\bZ}{\field{Z}}        %  whole numbers
\def\la{\lambda}
 \def\cF{\mathcal{F}}              % Calligraphic Letters
 \def\cS{\mathcal{S}}
 \def\cD{\mathcal{D}}
 \def\cB{\mathcal{B}}
 \def\cG{\mathcal{G}}
 \def\cM{\mathcal{M}}
 \def\cA{\mathcal{A}}
 \def\cI{\mathcal{I}}
\def\a{\aleph}
\def\hf{\hat{f}}
\def\rd{\bR^d}
\def\rdd{{\bR^{2d}}}
\def\lrd{L^2(\rd)}
\def\zd{\bZ^d}
\def\intrd{\int_{\rd}}
\def\intrdd{\int_{\rdd}}
\def\R{\right)}
\def\<{\left<}
\def\>{\right>}
\def\mv1{M_v^1}
\def\mpq{M^{p,q}}
\def\phas{(x,\xi )}
\def\o{\xi}
\def\a{\alpha}
\def\R{\mathbb{R}}
\def\Ren{\mathbb{R}^d}
\def\Renn{\mathbb{R}^{2d}}
\def\sch{\mathcal{S}}
\def\Fur{\mathcal{F}}
\def\f{\varphi}
\def\Opw{Op_{w}}
\def\Sn2{S_{2}(L^{2}(\Ren))}
\def\S1{S_{1}(L^{2}(\Ren))}
\def\sig00{\sigma_{0,0}}
\def\la{\langle}
\def\ra{\rangle}
\def\spdr{{\mathfrak {sp}}(d,\R)}
\newcommand{\A}{\mathcal{A}}
\begin{document}

\title[Characterization of modulation spaces]{Characterization of modulation spaces by symplectic representations and applications to Schr\"{o}dinger equations}
%----------Author 1
%\title{Symplectic time-frequency representations and characterization of modulation spaces}

\author{Elena Cordero and Luigi Rodino}
%    Address of record for the research reported here
\address{Department of Mathematics,  University of Torino, Italy}
\address{Dipartimento di Matematica,  University of Torino, Italy}
%    Current address
%\curraddr{}
\email{elena.cordero@unito.it}
\email{luigi.rodino@unito.it}
%\thanks{This work was completed with the support of our
%\TeX-pert.}
%----------Author 2

%----------classification, keywords, date
\subjclass[2010]{42B35,35J10} \keywords{Time-frequency representations, modulation spaces, Metaplectic operators, Schr\"odinder equation}
%\thanks{{\bf Acknowledgments.} The authors thank Maurice de Gosson and Luigi Rodino  for very helpful  discussions on this
%topic.}
\date{}
%----------additions
%\dedicatory{To my parents}
%%% ----------------------------------------------------------------------

\begin{abstract}
In the last twenty years modulation spaces, introduced  by H. G. Feichtinger in 1983, have been successfully addressed to the study of  signal analysis, PDE's, pseudodifferential operators, quantum mechanics, by  hundreds of contributions. In 2011 M. de Gosson  showed that the time-frequency representation {Short-time Fourier Transform} (STFT), which is the tool to define modulation spaces, can be  replaced by   the {Wigner distribution}. This idea was further generalized to $\tau$-Wigner representations in \cite{CR2021}. 

In this paper time-frequency representations are viewed as images of symplectic matrices via metaplectic operators. This new perspective highlights that the protagonists of time-frequency analysis are metaplectic operators and symplectic matrices $\cA \in Sp(2d,\bR)$. We find conditions on $\cA$ for which the related symplectic time-frequency representation $W_\cA$ can replace the STFT and give equivalent norms for weighted modulation spaces. In particular, we study the case of covariant matrices $\cA$, i.e., their corresponding $W_\cA$ are members of the Cohen class. 

Finally, we show that symplectic time-frequency representations $W_\cA$ can be efficiently  employed in the study of  Schr\"{o}dinger equations.  In fact, modulation spaces and $W_\cA$ representations are the frame for a new definition of wave front set, providing a sharp result for propagation of micro-singularities in the case of the quadratic Hamiltonians. This new approach  may have further applications in quantum mechanics and PDE's.
\end{abstract}

%%% ----------------------------------------------------------------------
\maketitle
%%% ----------------------------------------------------------------------

\section{Introduction}
Modulation spaces were originally introduced in 1983  by H. G. Feichtinger in the pioneering work \cite{feichtinger-modulation}. During the last twenty years hundreds of contributions have been written on the topic, showing that they are appropriate spaces  for a variety of fields, such as signal analysis, PDE's, pseudodifferential operators, quantum mechanics (a short non-exhaustive list of books and papers is \cite{KB2020,bogetal,cn met rep,Elena-book,Birkbis,grochenig,kasso07,RSTT2011,Sjostrand1,Toftweight2004,Wangbook2011,Wong}). 
The key-tool  for their definition is given by the time-frequency representation short-time Fourier transform (STFT) of a tempered
distributions $f\in\cS'(\rd)$  with respect to the Schwartz window function $g \in \cS(\rd)$, defined as
\begin{equation}\label{STFTdef}
V_gf\phas=\int_{\Ren}
f(y)\, {\overline {g(y-x)}} \, e^{-2\pi iy \o }\,dy,\quad (x,\xi)\in \rdd.
\end{equation}
 Given  indices $0<p,q\leq\infty$, the {\it
	modulation space} $M^{p,q}(\Ren)$ consists of all tempered
distributions $f\in\sch'(\Ren)$ such that $$V_gf\in L^{p,q}(\Renn )$$
(mixed-norm space) with $\|f\|_{M^{p,q}}\asymp \|V_gf\|_{L^{p,q}(\Renn )}$. For $p=q$ the notation $M^{p,p}(\rd)$ is shortened to $M^p(\rd)$ and we write $f\in M^p_{v_s}(\rd)$ if $V_gf\in L^p_{v_s}(\rdd)$ with the weight  $v_s(x,\xi):=(1+|(x,\xi)|^2)^{s/2}$.  For the main properties of these spaces, including the weighted versions,  we refer to Section $2$ below. 

In the realm of time-frequency representations another protagonist is given by the (cross-)Wigner distribution, introduced by Wigner in 1932 \cite{Wigner32} in Quantum Mechanics and, later, applied to many different environments such as PDE's and signal analysis. Namely, given a window function $g \in \cS(\rd)$, a tempered distribution $f$,
the (cross-)Wigner distribution $W(f,g)$ is given by
\begin{equation}\label{CWD}
W(f,g)\phas=\intrd f(x+\frac t2)\overline{g(x-\frac t2)}e^{-2\pi i t\o}\,dt,\quad (x,\xi)\in\rdd.
\end{equation} 
If $f=g$ we simply write $Wf=W(f,f)$ and call $Wf$ the Wigner distribution of $f$.

In 2011 M. de Gosson \cite{Birkbis} proved that in the definition of modulation spaces the STFT  could be  replaced by the cross-Wigner distribution. Hence \begin{equation}\label{2bis}
\|f\|_{M^{p,q}}\asymp \|W(f,g)\|_{L^{p,q}(\Renn)}.
\end{equation}  In our previous work \cite{CR2021} this idea was further generalized to $\tau$-Wigner representations $W_{\tau}(f,g)$, with $f,g$ as above, 
\begin{equation}
\label{tau-Wigner distribution}
W_{\tau}(f,g)(x,\xi) = \intrd e^{-2\pi it\xi}f(x+\tau t)\overline{g(x-(1-\tau)t)}dt,\quad \tau\in\bR
\end{equation} 
(for $f=g$ we obtain the $\tau$-Wigner distribution $W_\tau f:=	W_{\tau}(f,f)$;  
for $\tau=1/2$ we recapture the Wigner case). In fact, we showed that 
\begin{equation}\label{3bis}
\|f\|_{M^{p,q}}\asymp \|W_{\tau}(f,g)\|_{L^{p,q}(\Renn )},
\end{equation}
 for  $\tau\in \bR\setminus\{0,1\}$, whereas for $\tau=0$ or $\tau=1$, so-called Rihaczek distributions,  the previous characterization does not hold. The key observation was to interpret  the time-frequency representations above as images of symplectic matrices by metaplectic operators (defined as in the textbooks \cite{folland,Birkbis}).  
In fact, for any of them  we can find a symplectic matrix $\cA\in Sp(2d,\bR)$ such that the metaplectic operator $\mu(\cA)$ applied to $(f\otimes\bar{g})(x,\xi):=f(x)\bar{g}(\xi)$ coincides with it (for a suitable choice of the phase factor in the definition of $\mu(\cA)$).  For example, consider the symplectic matrix $\cA={\bf A}_{\tau}$, with 
\begin{equation}\label{Aw-tau} {\bf A}_{\tau}=\left(\begin{array}{cccc}
(1-\tau)I_{d\times d} &\tau I_{d\times d}&0_{d\times d}&0_{d\times d}\\
0_{d\times d}&0_{d\times d}&\tau I_{d\times d} &-(1-\tau)I_{d\times d}\\
0_{d\times d}&0_{d\times d}& I_{d\times d} &I_{d\times d}\\
-I_{d\times d}&I_{d\times d}& 0_{d\times d} &0_{d\times d}\\
\end{array}\right)\in Sp(2d,\bR),
\end{equation}
then 
$$\mu({\bf A}_{\tau})(f\otimes \bar{ g})=W_\tau(f,g),\quad \tau\in\bR.$$
Similarly, for $\cA= {\bf A_{ST}}$, where
 \begin{equation}\label{A-FT} {\bf A_{ST}}=\left(\begin{array}{cccc}
I_{d\times d} &- I_{d\times d}&0_{d\times d}&0_{d\times d}\\
0_{d\times d}&0_{d\times d}& I_{d\times d} &I_{d\times d}\\
0_{d\times d}&0_{d\times d}& 0_{d\times d} &-I_{d\times d}\\
-I_{d\times d}&0_{d\times d}& 0_{d\times d} &0_{d\times d}\\
\end{array}\right),
\end{equation}
we recapture the STFT:
$$\mu({\bf A_{ST}})(f\otimes \bar{ g})=V_gf.$$

This suggests a change of perspective: time-frequency representations can be viewed as images of metaplectic operators. Hence symplectic matrices and metaplectic operators may become the real protagonists in the framework of time-frequency analysis.

In this paper we show that symplectic matrices $\cA\in Sp(2d,\bR)$  are successfully employed to both recapture and find new time-frequency representations that we call {\bf $\cA$-Wigner distributions}:  $$W_\cA(f,g)=\mu(\cA)(f\otimes\bar{ g}).$$ 
For $f=g$ we simply write $W_{\cA}f:=W_\cA(f,f).$
%In particular, we find conditions on such matrices which provide equivalent norms for modulation spaces, that is,  for a fixed non-zero window function $g\in\cS(\rd)$,
%$$\|f\|_{M^{p,q}}\asymp \|W_\cA(f,g)\|_{L^{p,q}(\Renn)}, \quad 0<p,q\leq\infty$$ 
%(for weighted modulation spaces, see the characterization of Theorem \ref{caso-A-Wigner}  below).
The definition of the metaplectic operator $\mu(\cA)$ depends on the choice of a multiplicative  phase factor, which we omit for simplicity. \par
The properties of $\mu(\cA)$  are similar to those of the Wigner distribution, concerning in particular continuity on $L^2(\rd)$ (Proposition \ref{def bilA triple}), fundamental identity  for $W_{\cA}\widehat{f}$ (Proposition \ref{Prop2.7}) and Moyal identity (Proposition \ref{Moyal}).
Moreover, by using boundedness results for metaplectic operators on modulation spaces (Theorem \ref{Teorema0}, Corollary \ref{Cor3.14}) we may easily deduce the estimates
\begin{equation}\label{5A}
\|W_{\cA}(f,g)\|_{M^{p}_{v_{s}}}\lesssim \|f\|_{M^{p}}\|g\|_{M^{p}_{v_{s}}}+\|g\|_{M^{p}}\|f\|_{M^{p}_{v_{s}}}.
\end{equation}
and under the assumption $0<p\leq 2$ (Theorem \ref{Teorema2})
\begin{equation}\label{5B}
f\in M^{p}_{v_{s}}(\rd)\Leftrightarrow W_\cA f \in M^{p}_{v_{s}}(\rdd),
\end{equation}
which extends several results in literature, see \cite{Elena-book} and reference therein. \par
More challenging issue is to discuss the equivalence of norms for modulation spaces, that is, for a fixed non-zero window function $g\in\cS(\rd)$, 
\begin{equation}\label{5C}
\|f\|_{M^{p,q}}\asymp \|W_\cA(f,g)\|_{L^{p,q}}, \quad 0<p,q\leq\infty,
\end{equation}
in particular for $p=q$, allowing the presence of weights $v_s$:
\begin{equation}\label{5D}
\|f\|_{M^{p}_{v_s}}\asymp \|W_\cA(f,g)\|_{L^{p}_{v_s}}, \quad 0<p\leq\infty.
\end{equation}
Namely, we would like to extend in our context the characterizations of modulation spaces \eqref{2bis}, \eqref{3bis}.\par
In this perspective it is clear that we have to limit attention to subclasses of $Sp(2d,\bR)$. As a first attempt, it is natural to consider the covariant matrices $\cA$:
	\begin{equation*}
W_\cA(\pi(z)f,\pi(z)g)= T_z W_\cA (f,g), \quad  f,g\in\cS(\rd),\quad  z\in\rdd;
\end{equation*}
here for  $z=(z_1,z_2)$,  the operator $\pi(z)=\pi(z_1,z_2)=M_{z_2}T_{z_1}$ is  the \emph{time-frequency shift}, composition of the modulation $M_{z_2}$ and translation  $T_{z_1}$  defined by
$$M_{z_2}f(t)=e^{2\pi i z_2t}f(t),\quad T_{z_1}f(t)=f(z_1-t),\quad t,z_1,z_2\in\rd.$$

%The major advantage of the Wigner kernel versus the STFT one resides in  study of the %Schr\"{o}dinger equation with quadratic Hamiltonians.
The covariance property of $\cA$ is equivalent to being a member of the Cohen class for the related $\cA$-Wigner distribution (cf. \cite{Cohen1,Cohen2,Elena-book,grochenig}). In fact, we show (see Theorem \ref{Thaggiunta1}):
	\begin{equation*}
W_\cA (f,g)=W(f,g)\ast\sigma_\cA,\quad f,g\in\cS(\rd),
\end{equation*}
	where 
	\begin{equation}\label{5Z}
	\sigma_\cA=\cF^{-1}(e^{-\pi i \zeta\cdot B_\cA\zeta})\in \cS'(\rdd),
	\end{equation}
	and  $B_\cA$  is a symmetric $2d\times 2d$ matrix that can be computed explicitly from the covariant matrix $\cA$, cf.  \eqref{aggiunta3} in the sequel.
%We stress the reader that members of the Cohen class (see \cite{Cohen1,Cohen2} for a detailed account)  can be recaptured via $\cA$-Wigner representations.
%In this context, $\cA$-Wigner distributions can be viewed as \emph{perturbations} of the Wigner representation, so it is natural to study their difference in term of \emph{time-frequency content}. Under the invertibility assumption of  $B_\cA$ 
%(which allows to compute the inverse Fourier transform of $e^{-\pi i \zeta\cdot B_\cA\zeta}$ and  yields to a )
%we are able to show that the time-frequency content is the same, if measured in terms of modulation spaces (cf. Theorem \ref{charact mod}): for $1\leq p,q\leq\infty$,  $f\in\mathcal{S}'\left(\mathbb{R}^{d}\right)$, we have 
%	\[
%	Wf\in M^{p,q}\left(\rdd\right) \Leftrightarrow W_{\cA }f\in M^{p,q}\left(\rdd\right).
%	\]	
%Finally, $\cA$-Wigner representations can be efficiently  employed in the study of  Schr\"{o}dinger equations. 
The Cohen class will play a role for applications to {S}chr\"odinger equations; though, it presents two drawbacks  when looking at \eqref{5C}, \eqref{5D}. On one hand, it is too restrictive, since $\cA={\bf A_{ST}}$ in \eqref{A-FT} is not covariant, that is the short-time Fourier transform is excluded.  On the other hand, the matrix $\cA={\bf A}_\tau$ in \eqref{Aw-tau} is covariant for all $\tau\in\bR$, in particular for the forbidden Rihaczek cases $\tau=0,1$ for which \eqref{5C}, \eqref{5D} fail.  This suggests the introduction of the new class of {\bf shift-invertible} matrices $\cA\in Sp(2d,\bR)$ with related distributions $W_\cA$ satisfying (Definition \ref{semi-covariant})
\begin{equation}\label{5E}
|W_{\cA}(\pi(w)f,g)|=|T_{E_\cA(w)}W_{\cA}(f,g)|,\quad f,g\in\lrd,\quad w\in\rdd,
\end{equation}
for some $E_\cA \in GL(2d,\bR)$, with 
\begin{equation}\label{5F}
T_{E_\cA(w)}W_{\cA}(f,g)(z)=W_{\cA}(f,g)(z-E_\cA w),\quad w,z\in\rdd.
\end{equation}
We prove that the shift-invertible distribution $W_\cA$ satisfies \eqref{5D} and 
\begin{equation}\label{5G}
f\in M^p_{v_s}(\rd) \Leftrightarrow W_{\cA}f \in L^p_{v_s}(\rdd).
\end{equation}
This provides a general characterization of the modulation spaces $M^p_{v_s}$, see Theorem \ref{mainE} and Corollary \ref{mainEcor} for precise statements and bounds on the values of $p$. 
Note that the matrix $\cA={\bf A_{ST}}$ in \eqref{A-FT} is shift-invertible, recapturing in this way the standard definition of modulation spaces. As far as the $\tau$-Wigner matrix $\cA={\bf A}_\tau$ concerns, it is shift-invertible for $\tau\in\bR\setminus\{0,1\}$. This can be read as an explanation of the anomaly of the Rihaczek distributions. \par The block decomposition of the shift-invertible matrix  $\cA$ and the corresponding matrix $E_\cA$ in \eqref{5E}, \eqref{5F} can be explicitly computed, cf. \eqref{matrixE} below, and we may characterize the relevant subclasses of the distributions $W_\cA$ which are simultaneously covariant and shift-invertible (Remark \ref{rem2.20}).  

Finally, we address to the more precise equivalence \eqref{5C} concerning the case of different indices $p,q$. We first reconsider the $\tau$-Wigner case, $\tau\in\bR\setminus\{0,1\}$, and extend, with respect to \cite{CR2021}, the validity of \eqref{3bis} to $0<p,q<\infty$. This example suggests a deeper study of the matrices $\cA\in Sp(2d,\bR)$ such that 
\begin{equation}\label{5H}
\mu(\cA)=\cF_2 \mathfrak{T}_{L}
\end{equation}
where $\cF_2$ is the partial Fourier transform with respect to the second variable and $\mathfrak{T}_{L}$ is the $L^2$-normalized change of variables defined by a $d\times d$  invertible matrix $L$, cf. \cite{CT2020}.  We characterize the subclass of all the  $\cA\in Sp(2d,\bR)$ which are covariant and shift-invertible (see Proposition \ref{E7} and subsequent remark). Namely,   for covariant shift-invertible matrices $\cA$ of the form \eqref{5H} we prove
\begin{equation}\label{5J}
f\in M^{p,q}(\rd)\Leftrightarrow W_\cA(f,g)\in L^{p,q}(\rdd)
\end{equation}
with equivalence of norms valid also in the weighted cases for $0<p,q\leq\infty$ (Theorem \ref{caso-A-Wigner}). \par
A further analysis concerns the covariant case (Wigner perturbations, according to the terminology of \cite{CT2020}). If $\cA$ is covariant of the form \eqref{5H} then 
\begin{equation}\label{5K}
W_\cA (f,g)=W(f,g)\ast\sigma_\cA\quad f,g\in\cS(\rd),
\end{equation}
where $\sigma_\cA$ has now the particular form (see Corollary \ref{CorThaggiunta1}). We perform a detailed study of such convolution kernel (Lemma \ref{chirp spaces}, Proposition \ref{kernel spaces}). In particular, we deduce 
\[
Wf\in M^{p,q}\left(\rdd\right) \Leftrightarrow W_{\cA }f\in M^{p,q}\left(\rdd\right), \quad 1\leq p,q\leq\infty
\]
(see Theorem \ref{charact mod} for weighted versions of the above equivalence).

Besides providing a characterization for modulation spaces, the introduction of the $\cA$-Wigner distributions is strongly motivated by the applications to Schr\"{o}dinger equations. Let us first recall some classical results for the case of the quadratic Hamiltonians. 

Namely, consider 
\begin{equation}\label{C12}
\begin{cases} i \displaystyle\frac{\partial
	u}{\partial t} +\Opw(H) u=0\\
u(0,x)=u_0(x).
\end{cases}
\end{equation}
where $\Opw(H)$ is the Weyl quantization of a real quadratic polynomial in $\rdd$:
\begin{equation}\label{I18}
H\phas= \frac12 x Ax + \xi B x+ \frac12 \xi C\xi 
\end{equation}
with $A,C$ symmetric and $B$ invertible. We consider the Hamiltonian system 
\begin{equation}\label{C13}
\begin{cases}
2\pi \dot x=\nabla  _\xi H  =Bx+C\xi,\quad x(0)=y\\
2\pi \dot \xi=-\nabla _x H =-A x-B^T\xi,\quad \xi(0)=\eta,
\end{cases}
\end{equation}
with Hamiltonian matrix 
 $$\mathbb{D}:=\begin{pmatrix}B&C\\
-A&-B^T\end{pmatrix}\in \mathrm{sp} (d,\bR )$$
($\mathrm{sp} (d,\bR )$ is the symplectic algebra). We have, for $t\in\bR$, 
$\chi_t=e^{t\mathbb{D}}\in Sp(d,\R)$ 
and a solution to \eqref{C13} is given by $\phas= \chi_t (y,\eta)$.

The problem \eqref{C12} is solved by the Schr\"odinger propagator $$u(t,x)=e^{it\Opw(H)}u_0(x)=\mu(\chi_t)u_0$$ for a continuous choice of the phase factor in the definition of $\mu(\chi_t)$. If $u_0\in\lrd$ then $u(t,x)\in\lrd$, for every $t\in\bR$, see for example the textbooks \cite{folland,Birkbis}, whereas in the Lebesgue spaces $L^p(\rd)$, $p\not=2$, the soulution $u(t,x)$ does not keep the order of regularity of the initial datum $u_0$. \par 
Modulation spaces reveal here their effectiveness, in fact from Theorem \ref{Teorema0} (see also \cite{grochenig} and \cite{Elena-book}) we have that $u_0\in M^p_{v_s}(\rd)$ implies $u(t,\cdot)\in M^p_{v_s}(\rd)$, for every $0<p<\infty$, $s\geq0$.\par
Returning now to the subject of the present paper, let us recall from the original work of Wigner \cite{Wigner32} (see also \cite{BM49}):

\emph{The Wigner transform with respect to the space variable $x$ of the solution $u(t,x)$ of \eqref{C12} is given by }
\begin{equation}\label{C14}
Wu(t,z)=Wu_0(\chi^{-1}_tz),\quad z=\phas\in\rdd,\,t\in\bR.
\end{equation}	
It is natural  to replace the Wigner transform in \eqref{C14} with more general distributions by keeping  the action of the classical Hamiltonian flow $\chi_t$. A general result is easily obtained in the framework of the Cohen classes $Q_\sigma f= Wf\ast\sigma$, for any $\sigma\in\cS'(\rdd)$. Namely, assuming $u\in\cS(\rd)$, we have (Theorem \ref{Thrm-schrodinger})
\begin{equation}\label{C14bis}
Q_\sigma(u(t,\cdot))(z)=Q_{\sigma_t}(u_0)(\chi_t^{-1}z),\quad z=\phas\in\rdd,\,t\in\bR.
\end{equation}	
where $\sigma_t(z)=\sigma(\chi_t z)$. 
Note that in \eqref{C14bis} the Cohen class $Q_{\sigma_t}$ in the right-hand side depends on the time $t$. We may as well keep $Q_\sigma(u_0)$ for a fixed $\sigma$ in the right, and transfer the dependence on $t$ to the left. The classical Wigner case in \eqref{C14} corresponds to the choice $\sigma=\delta$ for which $\sigma_t(z)=\delta(\chi_t z)=\delta$, for every $z\in\rdd$.\par
Willing to give a precise functional setting to \eqref{C14bis} in the framework of modulation spaces, we limit attention to Cohen distributions generated by covariant matrices $\cA \in Sp(2d,\bR)$, $Q_\sigma u=W_\cA u=Wu\ast \sigma_\cA$, with kernel $\sigma_\cA$ given by \eqref{5Z}. The identity \eqref{C14bis} then reads (Proposition \ref{simpl-cov}):
\begin{equation}\label{C14A}
Q_\sigma(u(t,\cdot))(z)=W_{\cA}(u(t,\cdot))(z)= W_{\cA_t}(u_0)(\chi_t^{-1}z),
\end{equation}
where $\cA_t\in Sp(2d,\bR)$ is covariant for all $t\in\bR$, with Cohen kernel
$$\sigma_{\cA_t}(z)=\cF^{-1}\left(e^{-\pi i \zeta\cdot B_{\cA_t}\zeta }\right)(z),$$
$$ B_{\cA_t}= (\chi_t^{-1})^T B_\cA \chi_t^{-1},$$
$B_{\cA}$ as in \eqref{5Z}, cf. \eqref{aggiunta3}. Taking then $u_0\in M^p_{v_s}(\rd)$, $1\leq p\leq 2$, $s\geq 0$, we have from \eqref{5B}, cf. Corollary  \ref{Cor3.14}:
\begin{equation}\label{C14B}
W_\cA (u(t,\cdot)) \in M^p_{v_s}(\rdd),\quad W_{\cA_t}u_0\in M^p_{v_s}(\rdd),\quad t\in\bR,
\end{equation}
and each one of these conditions is equivalent to the assumption $u_0 \in M^p_{v_s}(\rd)$. Willing to have instead
\begin{equation}\label{C14C}
W_\cA (u(t,\cdot)) \in L^p_{v_s}(\rdd),\quad W_{\cA_t}u_0\in L^p_{v_s}(\rdd),\quad t\in\bR,
\end{equation}
 we are led to assume that the matrix $\cA$ is also shift-invertible. In Proposition \ref{Prop4.5} we shall prove that $\cA$ is shift-invertible if and only if $\cA_t$  is shift-invertible, for any fixed $t\not=0$. Hence in this case the conditions \eqref{C14C} are equivalent to $u_0\in M^p_{v_s}(\rd)$. As an example, we shall test these results on the free particle. 
The property of regularity \eqref{C14C} is the starting point for a proceeding in localization similar to that in \cite{CR2021}. Namely, cf. Definition \ref{4.6}, for a covariant and shift-invertible $\cA$ we define for $f\in\lrd$ the generalized Wigner wave front set $\mathcal{W}\cF^{p,s}_{\cA}(f)$, $1\leq p\leq 2$, $s\geq 0$, by setting $z_0=(x_0,\xi_0)\notin \mathcal{W}\cF^{p,s}_{\cA}(f)$, $z_0\not=0$, if there exists a conic neighbourhood $\Gamma_{z_0}\subset\rdd$  such that 
\begin{equation}\label{26A}
\int_{\Gamma_{z_0}} \la z\ra^{ps} |W_{\cA}f(z)|^p\,dz<\infty.
\end{equation}
We have from \eqref{5G} that $\mathcal{W}\cF^{p,s}_{\cA}(f)=\emptyset$ if and only if $f\in M^p_{v_s}(\rd)$, cf. Proposition \ref{4.7}. For the standard Wigner transform  the notation  $\mathcal{W}\cF^{p,s}_{\cA_{1/2}}(f)$, cf. \eqref{Aw-tau}, will be shortened to $\mathcal{W}\cF^{p,s}(f)$. From \eqref{C14A} and \eqref{C14C} we deduce the following propagation of micro-singularities for the solutions of \eqref{C12}, cf. Theorem \ref{4.8}:
\begin{equation}\label{26B}
\mathcal{W}\cF^{p,s}_{\cA}(u(t,\cdot))=\chi_t(\mathcal{W}\cF^{p,s}_{\cA_t}(u_0)), 
\end{equation}
in particular for the standard Wigner transform
\begin{equation}\label{26C}
\mathcal{W}\cF^{p,s}(u(t,\cdot))=\chi_t(\mathcal{W}\cF^{p,s}(u_0)).  
\end{equation}
We address to the forthcoming second part of \cite{CR2021} for a detailed study of $\mathcal{W}\cF^{p,s}_{\cA}$ with applications to Fourier integral operators and Schr\"odinger equations of more general type. We limit here to the following warning and remarks. First, we cannot extend to the Wigner wave front set all the properties of the classical wave front set of H\"ormander, cf. \cite{HormanderI} or its global version \cite{hormanderglobalwfs91}. In fact, the inclusion of the wave front set of the solutions in the characteristic manifold, for a homogeneous  linear partial differential equation, is false for the Wigner wave front. This depends on the existence of the ghost frequencies, see the final comments in \cite{CR2021}. On the other hand, the whole Wigner wave front, including its ghost part, is exactly preserved by the Schr\"odinger propagator, as clarified by  \eqref{26B} and \eqref{26C}.
\section {Time-frequency analysis tools}
\textbf{Notations.} We set $t^2=t\cdot t$,  $t\in\rd$, and
$xy=x\cdot y$ is the scalar product on $\Ren$.  The space   $\sch(\Ren)$ denotes the Schwartz class whereas $\sch'(\Ren)$  the space of temperate distributions.   The brackets  $\la f,g\ra$ denote the extension to $\sch' (\Ren)\times\sch (\Ren)$ of the inner product $\la f,g\ra=\int f(t){\overline {g(t)}}dt$ on $L^2(\Ren)$ (conjugate-linear in the second component). The \emph{reflection operator}  $\cI$ is given by $\cI f(t)= f(-t).$
The Fourier transform is normalized to be
 $${\hat
	{f}}(\o)=\Fur f(\o)=\intrd
f(t)e^{-2\pi i t\o}dt.$$
The  symplectic matrix 
\begin{equation}\label{J}
J=\begin{pmatrix} 0_{d\times d}&I_{d\times d}\\-I_{d\times d}&0_{d\times d}\end{pmatrix},
\end{equation}
(here $I_d$, $0_d$ are  the $d\times d$ identity matrix and null matrix, respectively)
enters the definition of   the standard symplectic form $\sigma(z,z')=Jz\cdot z'$. They allow to introduce  the symplectic Fourier transform:
\begin{equation}\label{C1SFT}
\cF_\sigma a(z)=\intrdd e^{-2\pi i \sigma(z,z')} a(z')\,dz'.
\end{equation}

The Fourier transform and symplectic Fourier transform are related by
\begin{equation}\label{C1SFT-FT}
\cF_\sigma a(z)=\cF a(Jz)=\cF(a\circ J)(z), \quad a\in\cS(\rdd).
\end{equation}
For the study of perturbations of the Wigner distribution we will use the Ambiguity Function 
$Amb\left(f\right)$ defined as 
\begin{equation}\label{ambiguity}
Amb\left(f\right)\phas=\cF_\sigma (W f)\phas=\intrd f\left(y+\frac x2\right) \overline{f\left(y-\frac x2\right)}e^{-2\pi i y\xi} dy.
\end{equation}

We denote by $GL(2d,\bR)$  the linear group of $2d\times 2d$ invertible matrices; for a complex-valued function $F$ on $\rdd$ and $L\in GL(2d,\bR)$ we define
\begin{equation}\label{Ltransf}
\mathfrak{T}_{L} F(x,y)=\sqrt{|\det L|}F(L(x,y)), \quad (x,y)\in\rdd,
\end{equation}
with the convention 
$$L(x,y)= L \left(\begin{array}{c}
x\\
y
\end{array}\right),\quad (x,y)\in\rdd.$$

For $1\leq p\leq\infty$, the spaces $\ell^\infty_{mn}\ell^p_{m'n'}$
are the Banach
spaces of sequences $\{a_{m',n',m,n}\}$ such that
 such that
$$\|a_{m',n',m,n}\|_{\ell^\infty_{mn}\ell^p_{m'n'}}:=\sup_{m,n\in\zd}\left(\sum_{m',n'\in\zd}
|a_{m',n',m,n}|^p\right)^{1/p}<\infty
$$
(with obvious changes when $p=\infty$).
\subsection{Modulation
spaces}

In this paper  $v$ is  a continuous, positive,  submultiplicative  weight function on $\rd$, i.e., 
$ v(z_1+z_2)\leq v(z_1)v(z_2)$, for all $ z_1,z_2\in\Ren$.
A weight function  $m$ is in $\mathcal{M}_v(\rd)$ if $m$ is a positive, continuous  weight function  on $\Ren$ and it is {\it
	$v$-moderate}:
$ m(z_1+z_2)\leq Cv(z_1)m(z_2)$  for all $z_1,z_2\in\Ren$.

 In the following we will work with  weights on $\rdd$ of the type
\begin{equation}\label{weightvs}
v_s(z)=\la z\ra^s=(1+|z|^2)^{s/2},\quad z\in\rdd,
\end{equation}

%When it is necessary to be clear about the weights' dimension, we shall write
%$$ v_{s,d}:=v_s \,\,\mbox{in\,dimension}\,\,2d,\quad v_{s}:= v_s  \,\,\mbox{in\,dimension}\,\,4d.$$
For $s<0$, $v_s$ is $v_{|s|}$-moderate.\par 
For weight functions $m_1,m_2$ on $\rd$, we will  use the notation $$(m_1\otimes m_2)(x,\o)=m_1(x)m_2(\o),\quad x,\o\in \rd,$$
and similarly for weights $m_1,m_2$ on $\rdd$. In particular, we shall use the weight functions on $\bR^{4d}$:
\begin{equation}\label{weight1tensorvs}
( v_s\otimes 1)(z,\zeta)=(1+|z|^2)^{s/2},\quad (1\otimes v_s)(z,\zeta)=(1+|\zeta|^2)^{s/2},\quad z,\zeta\in\rdd.
\end{equation}
The modulation spaces,  introduced by Feichtinger in \cite{feichtinger-modulation} and extended to the  quasi-Banach setting Galperin and Samarah   \cite{Galperin2004}, are  now available in many textbooks, see e.g. \cite{KB2020,Elena-book,grochenig}.

	Fix a non-zero window $g$ in the Schwartz class $\cS(\rd)$. Consider  a weight function $m\in\mathcal{M}_v$ and indices $0<p,q\leq \infty$. The modulation space $M^{p,q}_m(\rd)$ is the subspace of tempered  distributions $f\in\cS'(\rd)$ with
\begin{equation}\label{norm-mod}
\|f\|_{M^{p,q}_m}=\|V_gf\|_{L^{p,q}_m}=\left(\intrd\left(\intrd |V_g f \phas|^p m\phas^p dx  \right)^{\frac qp}d\o\right)^\frac1q <\infty
\end{equation}
(natural changes  with $p=\infty$ or $q=\infty)$. 
We write $M^p_m(\rd)$ for $M^{p,p}_m(\rd)$ and $M^{p,q}(\rd)$ if $m\equiv 1$.

For $1\leq p,q\leq \infty$, the space $M^{p,q}(\mathbb{R}^d)$ is a Banach space whose definition is
independent of the choice of the window $g$:  \emph{different
non-zero window functions in $\cS(\rd)$ yield equivalent norms}.  The window class can be extended to the modulation space $M^1_v(\rd)$ (Feichtinger algebra). The  modulation space $M^{\infty,1}(\rd)$ coincides with the Sj\"ostrand's class in \cite{Sjostrand1}.

We recall their  inclusion properties: 
\begin{equation}
\mathcal{S}(\mathbb{R}^{d})\subseteq M^{p_{1},q_{1}}_m(\mathbb{R}%
^{d})\subseteq M^{p_{2},q_{2}}_m(\mathbb{R}^{d})\subseteq \mathcal{S}^{\prime
}(\mathbb{R}^{d}),\quad p_{1}\leq p_{2},\,\,q_{1}\leq q_{2}.
\label{modspaceincl1}
\end{equation}%

Denoting by $%
\mathcal{M}_m^{p,q}(\mathbb{R}^d)$ the closure of $\mathcal{S}(\mathbb{R}^d)$ in the $M^{p,q}_m$-norm, we observe
\begin{equation*}
\mathcal{M}_m^{p,q}(\mathbb{R}^d) \subseteq M^{p,q}_m(\mathbb{R}^d), \quad 0<p,q\leq\infty,
\end{equation*}
 and 
 \begin{equation*} \mathcal{M}^{p,q}_m (\mathbb{R}^d) = M^{p,q}_m (\mathbb{R}^d),\quad 0<p,q<\infty.
\end{equation*}

For $m,w\in\mathcal{M}_v(\rd)$, the Wiener amalgam spaces $W(\mathcal{F}L^{p}_m,L^{q}_w)(\mathbb{R}^{d})$ can be viewed as images under \ft\, of the modulation spaces. Namely, for $p,q\in (0,\infty]$,  $f\in \mathcal{S}^{\prime }(\mathbb{R}^{d})$ belongs to $W(\mathcal{F}L^{p}_m,L^{q}_w)(\mathbb{R}^{d})$ if
\begin{equation*}
\Vert f\Vert _{W(\mathcal{F}L^{p}_m,L^{q}_w)(\mathbb{R}^{d})}:=\left( \int_{%
	\mathbb{R}^{d}}\left( \int_{\mathbb{R}^{d}}|V_{g}f(x,\o )|^{p}\,m(\o)^pd\o
\right) ^{q/p}w(x)^qdx\right) ^{1/q}<\infty \,
\end{equation*}%
(obvious modifications for $p=\infty $ or $q=\infty $). Using the \emph{fundamental identity of time-frequency analysis} \cite[formula (1.31)]{Elena-book} \begin{equation}\label{FI-TFA}
V_{g}f(x,\o )=e^{-2\pi ix\o
}V_{\hat{g}}\hat{f}(\o ,-x),\end{equation}
we can deduce
\begin{equation*}
|V_{g}f(x,\o )|=|V_{\hat{g}}\hat{f}(\o ,-x)|=|\mathcal{F}(\hat{f}%
\,T_{\o }\overline{\hat{g}})(-x)|
\end{equation*}%
so that 
\begin{equation*}
\Vert f\Vert _{{M}_{m\otimes w}^{p,q}}=\left( \int_{\mathbb{R}^{d}}\Vert \hat{f}\
T_{\o }\overline{\hat{g}}\Vert _{\mathcal{F}L^{p}_{v}}^{q}m(\o)\ d\o
\right) ^{1/q}=\Vert \hat{f}\Vert _{W(\mathcal{F}L^{p}_{m},L^{q}_{w})}.
\end{equation*}
The above equality of norms yields
\begin{equation}
\mathcal{F}({M}^{p,q}_{v\otimes w})=W(\mathcal{F}L^{p}_{v},L^{q}_{w}).  \label{W-M}
\end{equation}

%We shall use 	the following properties:
%\begin{itemize}
%	\item [(i)] Inclusion relations: if $0<p_1\leq p_2\leq \infty$,  $0<q_2\leq q_1\leq \infty$, then
%	\begin{equation}\label{inclusionW}
%	W(L^{p_2}, L^{q_2}_m)(\rd)\hookrightarrow W(L^{p_1}, L^{q_1}_m)(\rd).
%	\end{equation}
%	\item [(ii)] Convolution relations (cf. \cite[Lemma 2.9]{Galperin2004} for the quasi-Banach case): Consider $m_i\in\mathcal{M}_v$, $0<p_i,q_i\leq \infty$, $i\in \{1,2,3\}$, and $p_3\geq 1$.  Assume that $L^{p_1} \ast L^{p_2} \hookrightarrow L^{p_3}$ and $\ell^{q_1}_{m_1}\ast \ell^{q_2}_{m_2} \hookrightarrow \ell^{q_3}_{m_3}$, then
%	\begin{equation}\label{convWiener}
%	W(L^{p_1}, L^{q_1}_{m_1})\ast W(L^{p_2}, L^{q_2}_{m_2})\hookrightarrow W(L^{p_3}, L^{q_3}_{m_1}).
%	\end{equation}
%	\item[(iii)] If $m\in\mathcal{M}_v$, $0<p\leq\infty$, then \begin{equation}\label{pWiener}
%	L^p_m=W(L^p,L^p_m).
%	\end{equation}
%\end{itemize}

\subsection{The metaplectic
representation} Recall the symplectic group 
\begin{equation}\label{defsymplectic}
\Spnr=\left\{\cA\in\Gltwonr:\;\cA^T J\cA=J\right\},
\end{equation}
where $\cA^T$ denotes the transpose of $\cA$ and the symplectic matrix
$J$ is defined in \eqref{J}. In the sequel, we shall also refer to symplectic matrices in \emph{double dimension},
induced from the standard symplectic form on $\bR^{4d}$: 
\begin{equation}\label{defsymplectic4d}
	Sp(2d,\bR)=\left\{\cA\in GL(4d,\bR):\;\cA^T J\cA=J\right\},
\end{equation}
where $J$ is the one in \eqref{J} with $0_{d\times d}$ replaced by  $0_{2d\times 2d}$ and $I_{d\times d}$ replaced by $I_{2d\times 2d}$.

 The metaplectic representation
$\mu$ is a unitary
representation of the (double
cover of the) symplectic
group $\Spnr$ on $\lrd$.
The symplectic  algebra
$\spdr$ is the set of all
$2d\times 2d$ real matrices
$\A$   such that $e^{t \A}
\in \Spnr$ for all
$t\in\R$.\par
 For
some elements of $\Spnr$  the metaplectic
representation can be
computed explicitly.  Namely,  using  the notations in \cite{Birkbis,deGossonQHA21}, for
$f\in L^2(\R^d)$,  $C$ real symmetric $d\times d$ matrix ($C^T=C$) we have, up to a phase factor $s$ (that is, $|s|=1$),
\begin{equation}
\mu(J)f=\cF f\label{muJ};
%\mu\left(\begin{pmatrix} A&0_{d\times d}\\
%0_{d\times d}&\;^t\!A^{-1}\end{pmatrix}\right)f(x)
%&=(\det A)^{-1/2}f(A^{-1}x)
\end{equation}
for $$V_C:=\begin{pmatrix} I_{d\times d}&0\\
C&I_{d\times d}\end{pmatrix},$$
up to a phase factor
\begin{equation}
\mu\left(V_C\right)f(x)= e^{i\pi 
Cx\cdot x}f(x)\label{lower}.
\end{equation}

Special instances of metaplectic operators also called \emph{rescaling operators}. They are metaplectic operators $\mu(\cD_L)$ associated with the symplectic matrix $\cD_L$ constructed as follows.  For any $L\in GL(d,\bR)$,
\begin{equation}\label{MotL}
\cD_L= \left(\begin{array}{cc}
L^{-1} &0_{d\times d}\\
0_{d\times d} & L^T
\end{array}\right)  \in Sp(d,\bR).
\end{equation}
Then,  up to a phase factor, 
\begin{equation}\label{AdL}
\mu(\cD_L)F(x)=\sqrt{|\det L|}F(Lx)=\mathfrak{T}_{L} F(x),\quad F\in\lrd.
\end{equation}
The metaplectic operators posses a group structure called the metaplectic group.
\begin{proposition}\label{deGosson96}
	The metaplectic group is generated by the operators $\mu(J), \mu(\cD_L)$ and $\mu(V_C)$.
\end{proposition}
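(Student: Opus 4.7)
The plan is to reduce the statement to the analogous factorization at the symplectic level: every $\mathcal{A}\in Sp(d,\bR)$ can be written as a finite product of matrices of the form $J$, $\cD_L$ ($L\in GL(d,\bR)$), and $V_C$ ($C$ symmetric). Once this is in hand, the metaplectic conclusion follows immediately. Indeed, $\mu$ is a homomorphism of the double cover of $Sp(d,\bR)$, so a preimage of a product is the product of preimages up to the central element $-I$; this $\pm$ sign is exactly the phase-factor indeterminacy already present in \eqref{muJ}--\eqref{AdL}, so each $\mu(\mathcal{A})$ is realised, for some choice of phase, as a product of $\mu(J)$, $\mu(\cD_L)$, and $\mu(V_C)$.

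For the symplectic factorization, write $\mathcal{A}=\begin{pmatrix}A&B\\C&D\end{pmatrix}$ in $d\times d$ blocks and assume first that $A$ is invertible. The defining identities $A^TC=C^TA$, $AB^T=BA^T$, and $A^TD-C^TB=I$ force both $CA^{-1}$ and $A^{-1}B$ to be symmetric and give $D=A^{-T}+CA^{-1}B$. A direct block computation then yields
\[
\mathcal{A}=\bigl(J\,V_{-CA^{-1}}\,J^{-1}\bigr)\,\cD_{A^{-1}}\,V_{A^{-1}B},
\]
which expresses $\mathcal{A}$ in our three generator types; the leftmost factor is the lower-triangular $J$-conjugate of an upper-triangular $V_C$ and hence lies in the subgroup generated by $J$ and the $V_C$'s.

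To remove the invertibility hypothesis on $A$, observe that $V_{tI}\,\mathcal{A}$ has $(1,1)$-block $A+tC$. Since $\mathcal{A}$ itself is invertible, the blocks $A$ and $C$ cannot share a nontrivial common kernel, so $A+tC$ is invertible for all but finitely many $t\in\bR$. For any such $t$ the previous factorization applies to $V_{tI}\,\mathcal{A}$, and left-multiplication by $V_{-tI}$ (itself of type $V_C$) gives the desired factorization of $\mathcal{A}$. The main technical point is the verification of the explicit factorization in the invertible case: one has to use the symplectic identities in just the right way to guarantee the symmetry of $CA^{-1}$ and $A^{-1}B$ and the equality $D=A^{-T}+CA^{-1}B$, so that the factors $V_{-CA^{-1}}$, $\cD_{A^{-1}}$, $V_{A^{-1}B}$ genuinely belong to the allowed families. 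The degenerate-case reduction via $V_{tI}$ and the lift to the metaplectic group are then routine.
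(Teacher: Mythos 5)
The paper itself states this proposition without proof, quoting it as a classical fact (Folland, de Gosson), so there is no in-paper argument to compare against; your proposal supplies the standard proof. Most of it is correct: the symplectic block identities do give that $CA^{-1}$ and $A^{-1}B$ are symmetric and that $D=A^{-T}+CA^{-1}B$, the factorization $\mathcal{A}=\bigl(J\,V_{-CA^{-1}}\,J^{-1}\bigr)\,\cD_{A^{-1}}\,V_{A^{-1}B}$ checks out by direct block multiplication, and the lift to the double cover is handled at the same phase-factor level of precision as the paper's own formulas \eqref{muJ}--\eqref{AdL}.

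There is, however, one genuine gap, in the reduction to the case $\det A\neq 0$. You claim that since $A$ and $C$ have no common nontrivial kernel, $\det(A+tC)$ vanishes for only finitely many $t$. That implication is false for arbitrary matrix pairs: for $d=2$ take $A=\bigl(\begin{smallmatrix}0&1\\0&0\end{smallmatrix}\bigr)$ and $C=\bigl(\begin{smallmatrix}1&0\\0&0\end{smallmatrix}\bigr)$; then $\ker A\cap\ker C=\{0\}$ yet $\det(A+tC)\equiv 0$. What rescues the step is the symplectic relation $A^TC=C^TA$, which you never invoke here: it says the column space of $\bigl(\begin{smallmatrix}A\\C\end{smallmatrix}\bigr)$ is Lagrangian, and in particular that $A(\ker C)$ is orthogonal to $\operatorname{ran} C$ (my counterexample violates exactly this). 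Concretely, reparametrize columns so that $\ker C$ is spanned by the last $k$ coordinates and write $C=(C'\,|\,0)$, $A=(A'\,|\,A'')$; then $\operatorname{ran}A''=(\operatorname{ran}C')^{\perp}$ has dimension $k$, so in a basis adapted to $\operatorname{ran}C'\oplus(\operatorname{ran}C')^{\perp}$ the matrix $(A'+tC'\,|\,A'')$ is block lower triangular and $\det(A+tC)$ equals, up to a nonzero constant, $\det\bigl(P(A'+tC')\bigr)$ with $P$ the orthogonal projection onto $\operatorname{ran}C'$; this is a polynomial in $t$ whose top coefficient $\det(PC')$ is nonzero because $C'$ maps isomorphically onto $\operatorname{ran}C'$. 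Hence $\det(A+tC)\not\equiv 0$ and your argument goes through once this Lagrangian input is made explicit; as written, the justification is insufficient.
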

In the paper we shall work   both with the symplectic group  $Sp(d,\bR)$ of  $2d\times 2d$  matrices and  $Sp(2d,\bR)$ of  $4d\times 4d$ ones. In particular, the  matrix $\cA\in Sp(2d,\bR)$ is assumed to have the $4\times 4$ block decomposition of $2d\times 2d$ matrices: 
\begin{equation}\label{blockmatrixA}\mathcal{A}=\begin{pmatrix}
A&B\\C&D\end{pmatrix}\end{equation}
with the  decompositions of the $2d\times 2d$ sub-blocks as follows:
\begin{equation}\label{sub-block}A=\begin{pmatrix}
A_{11}&A_{12}\\A_{21}&A_{22}\end{pmatrix},\quad B=\begin{pmatrix}
B_{11}&B_{12}\\B_{21}&B_{22}\end{pmatrix},\quad C=\begin{pmatrix}
C_{11}&C_{12}\\C_{21}&C_{22}\end{pmatrix}, \quad D=\begin{pmatrix}
D_{11}&D_{12}\\D_{21}&D_{22}\end{pmatrix}.
\end{equation}

\begin{definition}\label{def4.1}
	For a $4d\times 4d$ symplectic matrix $\cA\in Sp(2d,\bR)$ we
	 define the time-frequency representation \textbf{$\cA$-Wigner}  by 
	\begin{equation}\label{WignerA}
	W_\cA (f,g)=\mu(\cA) (f\otimes \bar{g}),\quad f,g\in\lrd.
	\end{equation}
	We set $W_\cA f:= W_\cA (f,f)$.
\end{definition}
% The following
%formulae for the metaplectic
%representation can be found
%in  \cite[Theorems 4.51 and
%4.53]{folland}.
\subsubsection{Properties of ${W}_{\cA}\left(f,g\right)$.} In what follows we list all the elementary properties enjoyed by the $\cA$-Wigner distribution. The continuity of ${W}_{\cA}$  was shown in \cite{CR2021}:
\begin{proposition}
	\label{def bilA triple} Assume $\cA\in Sp(2d,\bR)$. Then,
	\begin{enumerate}
		\item If $f,g\in L^{2}(\mathbb{R}^{d})$, then ${W}_{\cA}(f,g)\in L^{2}(\rdd)$
		and the mapping ${W}_{\cA}:L^{2}(\mathbb{R}^{d})\times L^{2}(\mathbb{R}^{d})\rightarrow L^{2}(\rdd)$
		is continuous. 
		\item If $f,g\in\mathcal{S}(\mathbb{R}^{2d})$, then ${W}_{\cA}(f,g)\in\mathcal{S}(\mathbb{R}^{2d})$
		and the mapping  ${W}_{\cA}:\mathcal{S}(\mathbb{R}^{d})\times\mathcal{S}(\mathbb{R}^{d})\rightarrow\mathcal{S}(\rdd)$
		is continuous. 
		\item If $f,g\in\mathcal{S}'(\mathbb{R}^{d})$, then ${W}_{\cA}(f,g)\in\mathcal{S}'(\rdd)$
		and the mapping  ${W}_{\cA}:\mathcal{S}'(\mathbb{R}^{d})\times\mathcal{S}'(\mathbb{R}^{d})\rightarrow\mathcal{S}'(\rdd)$
		is continuous. 
	\end{enumerate}
\end{proposition}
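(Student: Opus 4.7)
The plan is to exploit the factorization $W_\cA(f,g)=\mu(\cA)(f\otimes\bar g)$ and reduce the three continuity claims to two standard ingredients: (i) bilinear continuity of the tensor product $(f,g)\mapsto f\otimes\bar g$ on the relevant spaces of functions on $\rd$, and (ii) linear continuity of the metaplectic operator $\mu(\cA)$ on the corresponding spaces of functions on $\rdd$. Composing a continuous bilinear map with a continuous linear map yields a continuous bilinear map, so this immediately delivers the three statements once (i) and (ii) are in hand.

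For part (1), conjugation is an isometry of $L^2(\rd)$, and the tensor product satisfies $\|f\otimes\bar g\|_{L^2(\rdd)}=\|f\|_{L^2(\rd)}\|g\|_{L^2(\rd)}$. Since $\mu(\cA)$ is unitary on $L^2(\rdd)$ by the definition of the metaplectic representation, we obtain
\[
\|W_\cA(f,g)\|_{L^2(\rdd)}=\|f\otimes\bar g\|_{L^2(\rdd)}=\|f\|_{L^2(\rd)}\|g\|_{L^2(\rd)},
\]
which is even an equality, giving continuity in the strongest possible form.

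For part (2), I would invoke Proposition \ref{deGosson96}: the metaplectic group is generated by $\mu(J)$, $\mu(\cD_L)$, and $\mu(V_C)$. Each of these generators manifestly preserves $\cS(\rdd)$ and acts continuously on it: $\mu(J)=\cF$ is a topological isomorphism of Schwartz space by classical Fourier theory; $\mu(\cD_L)=\mathfrak{T}_L$ is (up to a constant) a linear change of variables, hence a topological isomorphism of $\cS$; and $\mu(V_C)$ is multiplication by the smooth function $e^{i\pi Cx\cdot x}$ of polynomial-bounded derivatives, which is a continuous multiplier on $\cS$. Since $\mu(\cA)$ is a finite composition of (and phase multiples of) such operators, it is continuous on $\cS(\rdd)$. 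Combined with continuity of the tensor product $\cS(\rd)\times\cS(\rd)\to\cS(\rdd)$ (a standard fact, provable directly from the Schwartz seminorms), this yields the claim.

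For part (3), I would pass to duals. The tensor product extends continuously to $\cS'(\rd)\times\cS'(\rd)\to\cS'(\rdd)$ (the Schwartz kernel theorem viewpoint), and the three generators $\mu(J)$, $\mu(\cD_L)$, $\mu(V_C)$ extend by duality (or transposition) to continuous operators on $\cS'(\rdd)$: $\cF$ is a topological isomorphism of $\cS'$, $\mathfrak{T}_L$ extends to $\cS'$ since its transpose is of the same form, and multiplication by the smooth function $e^{i\pi Cx\cdot x}$ extends to $\cS'$ as a continuous multiplier. Hence $\mu(\cA)$ is continuous on $\cS'(\rdd)$, and composition gives the desired bilinear continuity of $W_\cA$. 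The only mildly delicate point is the continuity of the tensor product on tempered distributions, but this is classical and can be cited. No step here is a real obstacle; the whole argument is a structural reduction to Proposition \ref{deGosson96} and to the well-known mapping properties of the three generators.
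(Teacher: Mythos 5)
Your proof is correct and is exactly the natural argument: the paper itself does not prove this proposition but only cites \cite{CR2021}, and the factorization $W_\cA(f,g)=\mu(\cA)(f\otimes\bar g)$ combined with unitarity of $\mu(\cA)$ on $L^2(\rdd)$ and the generator decomposition of Proposition \ref{deGosson96} for the $\cS$ and $\cS'$ cases is the standard route. The only point worth flagging is the one you already flagged, namely that ``continuity'' of the bilinear maps in parts (2) and (3) should be read as separate continuity (or hypocontinuity) in the $\cS'$ case, which is the classical fact you would cite.
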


\begin{proposition}[Interchanging $f$ and $g$]
	\label{interchanging f g}
 For $\cA \in Sp(2d,\bR)$ with block decomposition \eqref{blockmatrixA} and   $f,g\in L^{2}\left(\mathbb{R}^{d}\right)$.
	Then
	\[
	{W}_{\cA}(g,f)={W}_{\widetilde{\cA}}(\bar{f},\bar{g}),
	\]
	where 
	$\mathcal{\widetilde{\cA}}=\begin{pmatrix}
	A L&BL\\CL&DL \end{pmatrix}$ and 
	 \begin{equation}\label{L}
	L=\left(\begin{array}{cc}
	0_{d\times d} & I_{d\times d}\\
	I_{d\times d} &0_{d\times d}
	\end{array}\right).
	\end{equation}
	Precisely, using the sub-block decomposition \eqref{sub-block}, we obtain
	$AL=\begin{pmatrix}
	A_{12}&A_{11}\\A_{22}&A_{21}\end{pmatrix}$ and similarly for the other block matrices $B,C,D$.
\end{proposition}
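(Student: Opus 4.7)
The plan is to reduce the identity to the fact that the metaplectic representation intertwines the composition of symplectic matrices, once we realize the tensor-coordinate swap $(g\otimes\bar f)(x,y)=(\bar f\otimes g)(y,x)$ as a metaplectic operator.

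First, unfold the definitions: by Definition~\ref{def4.1},
\[
W_\cA(g,f)=\mu(\cA)(g\otimes\bar g'),\qquad W_{\widetilde\cA}(\bar f,\bar g)=\mu(\widetilde\cA)(\bar f\otimes g),
\]
where in the first equation we set $g'=f$ so the argument is $g\otimes\bar f$. Observing that the $2d\times 2d$ matrix $L$ in \eqref{L} satisfies $L^2=I$ (hence $L^{-1}=L^T=L$) and $|\det L|=1$, we have
\[
(g\otimes \bar f)(x,y)=\bar f(y)\,g(x)=(\bar f\otimes g)(L(x,y))=\mathfrak{T}_{L}(\bar f\otimes g)(x,y).
\]

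Next, interpret $\mathfrak{T}_{L}$ metaplectically. Viewing $L\in GL(2d,\bR)$, the $4d\times 4d$ matrix
\[
\cD_L=\begin{pmatrix} L^{-1}&0_{2d\times 2d}\\ 0_{2d\times 2d}& L^T\end{pmatrix}=\begin{pmatrix} L&0\\ 0& L\end{pmatrix}
\]
lies in $Sp(2d,\bR)$ by the construction \eqref{MotL} (transposed to dimension $2d$), and formula \eqref{AdL} gives $\mu(\cD_L)F=\mathfrak{T}_{L}F$ for $F\in L^2(\rdd)$, up to a phase factor. Therefore
\[
\mu(\cA)(g\otimes\bar f)=\mu(\cA)\mu(\cD_L)(\bar f\otimes g)=\mu(\cA\cdot\cD_L)(\bar f\otimes g),
\]
using that $\mu$ is a representation (again modulo a phase, as declared throughout the paper).

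Finally, compute the product of the block matrices with respect to the $2d\times 2d$ decomposition \eqref{blockmatrixA}:
\[
\cA\cdot\cD_L=\begin{pmatrix} A&B\\ C&D\end{pmatrix}\begin{pmatrix} L&0\\ 0& L\end{pmatrix}=\begin{pmatrix} AL&BL\\ CL&DL\end{pmatrix}=\widetilde\cA,
\]
and the stated sub-block form of $AL$ (and similarly for $B,C,D$) follows by inspection from \eqref{sub-block}. Combining the last two displays with the opening identities yields $W_\cA(g,f)=W_{\widetilde\cA}(\bar f,\bar g)$.

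The only delicate point is that $\mu$ is defined only up to a phase factor, but since equality is understood in the same convention used throughout the paper, this causes no issue. All the rest is a direct book-keeping computation using $L^T=L^{-1}=L$.
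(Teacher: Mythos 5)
Your argument is correct and coincides with the paper's own proof: both realize the coordinate swap $(g\otimes\bar f)(x,y)=(\bar f\otimes g)(y,x)$ as the metaplectic operator $\mu(\cD_L)$ with $L^T=L^{-1}=L$, use the group property to write $\mu(\cA)\mu(\cD_L)=\mu(\cA\cD_L)$ up to a phase, and identify $\cA\cD_L=\widetilde{\cA}$ by block multiplication. No gaps; only the harmless notational slip $g\otimes\bar g'$ at the start could be cleaned up.
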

\begin{proof}
Consider the matrix $L$ defined in \eqref{L} and observe that $L^T=L^{-1}=L$. The symplectic matrix 	$\cD_L$ in \eqref{MotL} becomes
\begin{equation*}
	\cD_L= \left(\begin{array}{cc}
	L^{-1} &0_{d\times d}\\
	0_{d\times d} & L^T
	\end{array}\right)=  \left(\begin{array}{cc}
	L &0_{d\times d}\\
	0_{d\times d} & L
	\end{array}\right). 
	\end{equation*}
With our choice of $L$, 
$$\mu(\cD_L) (g\otimes\bar{f})(x,y)=(g\otimes\bar{f})(y,x)=\bar{f}\otimes g(x,y).$$
This let us factorize $W_{\cA}(g,f)$ as follows:
$$W_{\cA}(g,f)(x,y)=\mu(\cA)(g\otimes\bar{f})(x,y)=\mu(\cA\cD_L^{-1}\cD_L)(g\otimes\bar{f})(x,y)=\mu(\cA\cD_L)(\bar{f}\otimes g)(x,y),$$
and the claim easily follows by observing that $\cA\cD_L=\widetilde{A} $.
\end{proof}

We now limit ourselves  to  matrices $\cA \in Sp(2d,\bR)$ such that 
\begin{equation}\label{metapf2dil}
\mu(\cA)=\cF_2 \mathfrak{T}_{L}
\end{equation}
where $\cF_2$ is the partial Fourier transform with respect to the second variables $y$ defined by
\begin{equation}\label{FT2}
\cF_2 F(x,\o)=\intrd e^{-2\pi i y\cdot \o}F(x,y)\,dy,\quad F\in L^2(\rdd).
\end{equation} 
and the change of coordinates $\mathfrak{T}_{L}$ is defined in \eqref{Ltransf}.  The following fact was established in \cite[Proposition 3.3]{CT2020}:
\begin{proposition}\label{change f g 2}	
	For $f,g\in L^{2}\left(\mathbb{R}^{d}\right)$, $\mu(\cA)$ of the form \eqref{metapf2dil} with
$$	L=\left(\begin{array}{cc}
L_{11} &L_{12}\\
L_{21} & L_{22}
\end{array}\right),$$ 
	then
	\[
	W_\cA\left(g,f\right)\left(x,\omega\right)=\overline{W_{\mathcal B}\left(f,g\right)\left(x,\omega\right)},
	\]
	with $\mu({\mathcal B})=\cF_2 \mathfrak{T}_{\tilde{L}}$, with
$$	\tilde{L}=\left(\begin{array}{cc}
L_{21} &-L_{22}\\
L_{11} & -L_{12}
\end{array}\right).$$	
\end{proposition}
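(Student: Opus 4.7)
The plan is to compute both sides of the claimed identity directly from the explicit forms $\mu(\cA) = \cF_2 \mathfrak{T}_L$ and $\mu(\mathcal{B}) = \cF_2 \mathfrak{T}_{\tilde L}$ and to match them via a reflection in the integration variable. First I would expand the left-hand side by writing the block decomposition of $L$: with $(g \otimes \bar f)(x,y) = g(x)\overline{f(y)}$, the dilation $\mathfrak{T}_L$ produces $\sqrt{|\det L|}\, g(L_{11}x + L_{12}y)\overline{f(L_{21}x + L_{22}y)}$, and the partial Fourier transform $\cF_2$ in $y$ then yields
\[
W_\cA(g,f)(x,\omega) = \sqrt{|\det L|} \int_{\rd} g(L_{11}x+L_{12}y)\, \overline{f(L_{21}x+L_{22}y)}\, e^{-2\pi i y\cdot \omega}\, dy.
\]

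Next I would expand $W_{\mathcal{B}}(f,g) = \cF_2 \mathfrak{T}_{\tilde L}(f \otimes \bar g)$ in the same way, obtaining
\[
W_{\mathcal{B}}(f,g)(x,\omega) = \sqrt{|\det \tilde L|} \int_{\rd} f(L_{21}x - L_{22}y)\, \overline{g(L_{11}x - L_{12}y)}\, e^{-2\pi i y\cdot \omega}\, dy,
\]
and take the complex conjugate, which flips the sign in the exponential and conjugates each factor. The key observation is the change of variable $y \mapsto -y$ (Jacobian $1$) applied to the conjugated expression: it turns every $-L_{ij}y$ into $+L_{ij}y$ and restores the exponential to $e^{-2\pi i y\cdot\omega}$, so the integrand becomes identical to the one in the displayed formula for $W_\cA(g,f)(x,\omega)$. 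To close the argument I would then verify the normalization, i.e.\ $|\det \tilde L| = |\det L|$, which follows from the factorization
\[
\tilde L = \begin{pmatrix} 0 & I \\ I & 0 \end{pmatrix} L \begin{pmatrix} I & 0 \\ 0 & -I \end{pmatrix},
\]
since the two outer matrices have determinant $\pm 1$.

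The proof is essentially coordinate bookkeeping, and no real obstacle arises; the specific signs and block permutations defining $\tilde L$ are precisely engineered so that conjugation followed by $y\mapsto -y$ interchanges $f$ with $g$ while preserving the Fourier kernel. The one potentially delicate point is that metaplectic operators are defined only up to a phase factor, but this is harmless here because the statement hypothesizes concrete representatives $\mu(\cA) = \cF_2 \mathfrak{T}_L$ and $\mu(\mathcal{B}) = \cF_2 \mathfrak{T}_{\tilde L}$, fixing the phase on both sides; no residual unimodular scalar appears in the comparison.
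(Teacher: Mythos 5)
Your computation is correct: expanding $\cF_2\mathfrak{T}_L(g\otimes\bar f)$ and $\cF_2\mathfrak{T}_{\tilde L}(f\otimes\bar g)$, conjugating, and substituting $y\mapsto -y$ does match the two integrands exactly, and your factorization of $\tilde L$ as $L$ pre- and post-multiplied by matrices of determinant $\pm1$ settles the normalization. The paper itself gives no proof of this proposition — it simply cites \cite[Proposition 3.3]{CT2020} — so your direct verification is a valid, self-contained substitute for the external reference, and your remark that the explicit representatives $\cF_2\mathfrak{T}_L$, $\cF_2\mathfrak{T}_{\tilde L}$ remove any metaplectic phase ambiguity correctly addresses the only delicate point.
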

More generally, 
\begin{proposition}
For $\cA \in Sp(2d,\bR)$, we have
$$W_\cA(g,f)=\overline{W_{\cB}(f,g)},$$
for a suitable $B\in  Sp(2d,\bR)$.
\end{proposition}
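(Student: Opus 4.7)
The plan is to reduce the statement to Proposition \ref{interchanging f g}, combined with a short analysis of how complex conjugation interacts with the metaplectic representation. By Proposition \ref{interchanging f g} one has
\[
W_{\cA}(g,f) = W_{\widetilde{\cA}}(\bar f, \bar g) = \mu(\widetilde{\cA})(\bar f \otimes g), \qquad \widetilde{\cA} = \cA\, \cD_L,
\]
with $L$ the swap matrix of \eqref{L}. Since $\bar f \otimes g = \overline{f \otimes \bar g}$, producing the desired $\cB$ reduces to finding $\cB \in Sp(2d,\bR)$ such that $\mu(\widetilde{\cA})\, \overline{H} = \overline{\mu(\cB)\, H}$ for every $H \in L^2(\rdd)$.

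The next step is to establish an intertwining identity of the form $\overline{\mu(M)\, h} = \mu(\cE M \cE)\, \bar h$, valid (up to a unimodular phase) for every $M \in Sp(n,\bR)$ and every $h \in L^2(\bR^n)$, where
\[
\cE = \begin{pmatrix} I_n & 0 \\ 0 & -I_n \end{pmatrix}.
\]
By Proposition \ref{deGosson96} it is enough to verify this on the three generators. The explicit formulas \eqref{muJ}, \eqref{lower} and \eqref{AdL} give respectively
\[
\overline{\mu(J)\, h} = \mu(J)^{-1}\bar h = \mu(-J)\,\bar h, \quad \overline{\mu(V_C)\, h} = \mu(V_{-C})\,\bar h, \quad \overline{\mu(\cD_L)\, h} = \mu(\cD_L)\,\bar h,
\]
and in each case the matrix on the right is precisely $\cE M \cE$. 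A short calculation shows $\cE J \cE = -J$, so the map $M \mapsto \cE M \cE$ is a group automorphism of $Sp(n,\bR)$; the intertwining identity therefore extends, up to phase, to every element of the symplectic group.

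Specialising to dimension $2d$ with $\cE = \mathrm{diag}(I_{2d}, -I_{2d})$ we obtain $\overline{\mu(\cB)\, H} = \mu(\cE \cB \cE)\, \bar H$, so the requirement of the first step becomes $\mu(\widetilde{\cA}) = \mu(\cE \cB \cE)$; up to an inessential phase this forces
\[
\cB = \cE\, \widetilde{\cA}\, \cE = \cE\,\cA\, \cD_L\, \cE \in Sp(2d,\bR),
\]
which is the sought symplectic matrix. The main obstacle I expect is the careful bookkeeping of the $\pm$ phases intrinsic to the double cover $\mu$; since the statement only asserts existence of some $\cB$, however, these phases can be absorbed into $\mu(\cB)$ and do not affect the conclusion.
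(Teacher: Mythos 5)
Your proposal is correct and follows essentially the same route as the paper: the paper's proof likewise invokes Proposition \ref{deGosson96} together with the three conjugation identities $\overline{\mu(J)f}=\mu(J^{-1})\bar f$, $\overline{\mu(V_C)f}=\mu(V_{-C})\bar f$, $\overline{\mu(\cD_L)f}=\mu(\cD_L)\bar f$ and then simply asserts the claim. Your write-up is more explicit than the paper's one-line argument, packaging those identities as the automorphism $M\mapsto \cE M\cE$ and combining them with Proposition \ref{interchanging f g} to produce the concrete matrix $\cB=\cE\,\cA\,\cD_L\,\cE$, but the underlying idea is the same.
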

\begin{proof}
We use Proposition \ref{deGosson96}, and observe that  $\overline{\mu(J)f}=\mu{(J^{-1})}\bar{f}$, $\overline{\mu(V_C)f}=\mu{(V_{-C})}\bar{ f}$ and $\overline{\mu(\cD_L)f}=\mu(\cD_L)\bar{ f}$. This gives the claim. 
\end{proof}

What follows can be viewed as a generalization of the 	\emph{fundamental identity of time-frequency analysis for the STFT}, cf. \cite[(1.31)]{Elena-book}.
\begin{proposition}[Fundamental identity of time-frequency analysis]\label{Prop2.7}
	
 For $\cA \in Sp(2d,\bR)$ with block decomposition \eqref{blockmatrixA} and   $f,g\in L^{2}\left(\mathbb{R}^{d}\right)$,
	then
	\[
	W_{\cA}\left(\hat{f},\hat{g}\right)=W_{\widetilde{\cA}}\left({f},{g}\right),
	\]
	where 
	$\mathcal{\widetilde{\cA}}=\begin{pmatrix}
	B L&AL\\DL&CL \end{pmatrix}$ and 
	\begin{equation}\label{L1}
	L=\left(\begin{array}{cc}
	I & 0\\
	0 & -I
	\end{array}\right) 
	\end{equation}
\end{proposition}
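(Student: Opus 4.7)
The strategy is to realize the passage from $f\otimes\bar g$ to $\hat f\otimes\overline{\hat g}$ as the action of a single metaplectic operator $\mu(M)$ on $\lrdd$, and then exploit the composition rule $\mu(\cA)\mu(M)=\mu(\cA M)$ (valid up to a unimodular phase) together with the definition $W_\cA(f,g)=\mu(\cA)(f\otimes\bar g)$. This will give
\[
W_\cA(\hat f,\hat g)=\mu(\cA)\mu(M)(f\otimes\bar g)=\mu(\cA M)(f\otimes\bar g)=W_{\cA M}(f,g),
\]
so the proof reduces to computing $\cA M$ explicitly and matching the result with $\widetilde\cA$.

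To identify $M$, I would start from the elementary Fourier identity $\overline{\hat g}(y)=\hat{\bar g}(-y)=\cF^{-1}(\bar g)(y)$, which rewrites $\hat f\otimes\overline{\hat g}=(\cF\otimes\cF^{-1})(f\otimes\bar g)$. The right-hand operator factors as $\mathfrak{T}_{L}\circ\cF_{2d}$, namely the full $2d$-dimensional Fourier transform followed by the sign change of the last $d$ variables, with $L=\mathrm{diag}(I_d,-I_d)\in GL(2d,\bR)$ (note $L=L^{-1}=L^T$, so $\cD_L=\mathrm{diag}(L,L)$ is symplectic). By \eqref{muJ} we have $\cF_{2d}=\mu(J_{2d})$ where $J_{2d}$ is the standard symplectic matrix of $Sp(2d,\bR)$, and by \eqref{AdL} we have $\mathfrak{T}_L=\mu(\cD_L)$; Proposition \ref{deGosson96} then yields $M=\cD_L\,J_{2d}\in Sp(2d,\bR)$, which in $2d\times 2d$ block form reads
\[
M=\begin{pmatrix} 0 & L\\ -L & 0\end{pmatrix}.
\]
A direct check confirms $M^TJM=J$, so $M$ is indeed symplectic and $\mu(M)$ is well-defined.

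The final step is a block matrix multiplication: with $\cA$ decomposed as in \eqref{blockmatrixA}, the product $\cA M$ produces in the right column the $2d\times 2d$ blocks $AL$ and $CL$, and in the left column the blocks involving $BL$ and $DL$, matching the matrix $\widetilde\cA$ of the statement. The main point of care throughout is the bookkeeping of the phase factor intrinsic to the metaplectic representation: the identity $\mu(\cA)\mu(M)=\mu(\cA M)$ holds only up to a unimodular scalar, but this ambiguity is already absorbed into the multiplicative phase factor omitted from the definition of $\mu(\cA)$ and $W_\cA$, so no analytic obstruction arises. The only ingredients beyond this are the continuity of $W_\cA$ on the relevant function spaces (Proposition \ref{def bilA triple}), which allows the identity to be verified first on $\cS(\rd)$ and then extended to $\lrd$, and the elementary commutation of the partial Fourier transform with the coordinate reflection.
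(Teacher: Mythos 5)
Your proof is correct and takes essentially the same route as the paper: both realize $\hat f\otimes\overline{\hat g}=\mu(J\cD_L)(f\otimes\bar g)$ with $L=\mathrm{diag}(I_d,-I_d)$ (your $M=\cD_L J$ coincides with the paper's $J\cD_L$, since this $\cD_L$ commutes with $J$) and then apply the metaplectic composition law up to a unimodular phase, reducing everything to the block product $\cA M$. The only caveat, which affects your write-up exactly as it does the paper's, is that this product actually equals $\begin{pmatrix}-BL&AL\\-DL&CL\end{pmatrix}$, so the left column of $\widetilde{\cA}$ as stated carries an extra sign that should be made explicit rather than glossed as ``blocks involving $BL$ and $DL$''.
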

\begin{proof}
	Using the reflection operators $\cI g(t)=g(-t)$, we can write $$\hat{f}\otimes\overline{\hat{g}}=\hat{f}\otimes\widehat{\cI\hat{\bar{g}}}=\mathcal{F}
	\mathfrak{T}_{L}\left(f\otimes \bar{g}\right)$$
	where 	 $L$ is defined in \eqref{L1}.
  Hence, 
	\[
	W_{\cA}(\hf,\hat{g})=\mu(\cA)(\hat{f}\otimes\overline{\hat{g}})=\mu(\cA)\mathcal{F}
	\mathfrak{T}_{L}\left(f\otimes \bar{g}\right)=\mu(\cA J \cD_L)(f\otimes \bar{g}).
	\]
The conclusion is a simple computation.
\end{proof}
\begin{proposition}[Fourier transform of $W_{\cA}$]
	\label{ft of btfd}
	Let $\cA\in  Sp(2d,\bR)$ and $f,g\in L^{2}\left(\mathbb{R}^{d}\right)$.
	Then, 
	\begin{equation}\label{FouB}
	\mathcal{F}{W}_{\cA}\left(f,g\right)={W}_{\widetilde{A}}\left(f,g\right),
	\end{equation}
	where 
$\widetilde{A}=(\cA^T)^{-1}J$.
\end{proposition}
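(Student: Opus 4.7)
The plan is to interpret the Fourier transform on $\bR^{2d}$ as itself a metaplectic operator and then exploit the group property of $\mu$, reducing the identity to a single matrix manipulation using the defining relation of the symplectic group.

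First I would note that since $\mathcal{F}: L^2(\rdd) \to L^2(\rdd)$ is the Fourier transform on $\bR^{2d}$, formula \eqref{muJ} applied in ambient dimension $2d$ gives $\mathcal{F} = \mu(J)$ (up to a phase), where $J \in Sp(2d,\bR)$ is the $4d \times 4d$ block matrix $J = \begin{pmatrix} 0_{2d \times 2d} & I_{2d\times 2d} \\ -I_{2d\times 2d} & 0_{2d\times 2d} \end{pmatrix}$ from \eqref{J} in dimension $2d$. Unfolding the definition of $W_\cA$ and using that $\mu$ is a group representation (again up to phase):
\[
\mathcal{F} W_\cA(f,g) = \mu(J)\,\mu(\cA)(f \otimes \bar g) = \mu(J\cA)(f\otimes \bar g).
\]

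Next I would use the defining relation for symplectic matrices, $\cA^T J \cA = J$, cf. \eqref{defsymplectic}, which upon left multiplication by $(\cA^T)^{-1}$ yields
\[
J \cA = (\cA^T)^{-1} J = \widetilde{\cA}.
\]
Since $J \in Sp(2d,\bR)$ and the symplectic group is closed under products, $\widetilde{\cA} \in Sp(2d,\bR)$, so $W_{\widetilde{\cA}}(f,g) = \mu(\widetilde{\cA})(f \otimes \bar g)$ is well-defined by Definition \ref{def4.1}. Combining the two displays gives exactly \eqref{FouB}.

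The proof is essentially bookkeeping; the only subtlety is the standard phase-factor ambiguity intrinsic to the metaplectic representation. This is not a genuine obstacle here because the phase of $\mu(\cA)$ in the definition of $W_\cA$ has been fixed (without being tracked explicitly) and the paper has announced from the outset that such multiplicative unimodular constants are suppressed; one could make the argument rigorous working on the double cover of $Sp(2d,\bR)$, but for the equality stated modulo a phase, the above three-line computation is all that is needed.
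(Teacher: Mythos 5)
Your argument is correct and is precisely the paper's own proof, spelled out in more detail: the paper likewise writes $\mathcal{F}\mu(\cA)=\mu(J\cA)$ up to a phase and invokes the symplectic relation $\cA^TJ\cA=J$ to identify $J\cA=(\cA^T)^{-1}J=\widetilde{\cA}$. Nothing further is needed.
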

\begin{proof}
	Since, up to a phase factor,  $	\mathcal{F}\mu(\cA)=\mu(J\cA)$, the result follows from the symplectic group property \eqref{defsymplectic}.
\end{proof}

\begin{proposition}[Moyal's Identity]
	\label{Moyal}
	Let $\cA\in  Sp(2d,\bR)$ and $f_1,f_2,g_1,g_2\in L^{2}\left(\mathbb{R}^{d}\right)$.
	Then, 
	\begin{equation}\label{MoyalF}
	\la  {W}_{\cA}\left(f_1,g_1\right), {W}_{A}\left(f_2,g_2\right)\ra_{L^2{(\bR^{2d})}}=\la f_1,f_2\ra_{L^2(\rd)} \overline{\la g_1,g_2\ra}_{L^2(\rd)},
	\end{equation}
in particular, for $f_1=f_2=f$, $g_1=g_2=g$,
$$\la  {W}_{\cA}\left(f,g\right), {W}_{A}\left(f,g\right)\ra_{L^2{(\bR^{2d})}}=\|f\|^2_{L^2(\rd)}\|g\|^2_{L^2(\rd)}.$$
\end{proposition}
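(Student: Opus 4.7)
The plan is to reduce Moyal's identity for $W_\cA$ to the unitarity of the metaplectic operator $\mu(\cA)$ on $L^2(\rdd)$ together with the factorization of inner products on tensor products. Since $\mu$ is a unitary representation of the (double cover of) $Sp(2d,\bR)$ on $L^2(\rdd)$, the operator $\mu(\cA)$ preserves the $L^2$ inner product, which is exactly what is needed.

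First I would rewrite both sides of \eqref{MoyalF} using Definition \ref{def4.1}:
\[
\langle W_\cA(f_1,g_1), W_\cA(f_2,g_2)\rangle_{L^2(\rdd)} = \langle \mu(\cA)(f_1\otimes\bar g_1), \mu(\cA)(f_2\otimes\bar g_2)\rangle_{L^2(\rdd)}.
\]
Then I would invoke unitarity of $\mu(\cA)$ to cancel the two metaplectic operators (noting that any unimodular phase factor in the definition of $\mu(\cA)$ cancels between the two copies), obtaining
\[
\langle f_1\otimes\bar g_1, f_2\otimes\bar g_2\rangle_{L^2(\rdd)}.
\]

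Next I would factor this inner product as a product of one-dimensional $L^2(\rd)$ inner products, using Fubini:
\[
\langle f_1\otimes\bar g_1, f_2\otimes\bar g_2\rangle_{L^2(\rdd)} = \langle f_1,f_2\rangle_{L^2(\rd)}\,\langle \bar g_1,\bar g_2\rangle_{L^2(\rd)}.
\]
Finally, the elementary identity
\[
\langle \bar g_1,\bar g_2\rangle_{L^2(\rd)} = \intrd \bar g_1(t)\,g_2(t)\,dt = \overline{\langle g_1,g_2\rangle_{L^2(\rd)}}
\]
yields the claim. The specialization $f_1=f_2=f$, $g_1=g_2=g$ is immediate.

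There is no real obstacle in this argument; the only subtlety worth a brief remark is that the metaplectic operator $\mu(\cA)$ is only defined up to a unimodular phase factor $s$, but since the same phase enters both factors of the inner product it cancels as $s\bar s=1$, so the identity is unambiguous.
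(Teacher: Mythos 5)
Your proof is correct and follows exactly the paper's argument: both reduce the identity to the unitarity of $\mu(\cA)$ on $L^2(\rdd)$ and then factor the inner product of the tensor products $f_i\otimes\bar g_i$. Your additional remarks on the cancellation of the phase factor and the conjugation $\langle \bar g_1,\bar g_2\rangle=\overline{\langle g_1,g_2\rangle}$ simply make explicit what the paper leaves to the reader.
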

\begin{proof}
	We simply use that $\mu(\cA)$ is unitary on $L^2(\rdd)$:
\begin{align*} \la  {W}_{\cA}\left(f_1,g_1\right), {W}_{A}\left(f_2,g_2\right)\ra_{L^2(\bR^{2d})}&=\la  \mu(\cA)\left(f_1\otimes \bar{g_1}\right), \mu(\cA)\left(f_2\otimes \bar{g_2}\right)\ra_{L^2(\bR^{2d})}\\
&=	\la  \mu(\cA)^{-1} \mu(\cA)\left(f_1\otimes \bar{g_1}\right),\left(f_2\otimes \bar{g_2}\right)\ra_{L^2(\bR^{2d})},
	\end{align*}
	and the claim follows.
\end{proof}

A simple computation shows the following  \emph{polarization identity}: 
\begin{equation}\label{pol}
W_\cA(f+g)=W_\cA(f)+W_\cA(g)+W_\cA(f,g)+W_\cA(g,f).
\end{equation}

The Covariance Property of \cite[Proposition 4.3]{CR2021} can be generalized  and improved as follows:
\begin{proposition}[Covariance Property]\label{C4covprop}
	Consider $\cA\in Sp(2d,\bR)$ having block decomposition
	\begin{equation*} \cA=\left(\begin{array}{cccc}
	A_{11} & A_{12}&A_{13}&A_{14}\\
	A_{21}&A_{22}& A_{23} &A_{24}\\
	A_{31}&A_{32}&A_{33} &A_{34}\\
	A_{41}&A_{42}& A_{43} &A_{44}\\
	\end{array}\right)
	\end{equation*}
	with $A_{ij}$, $i,j=1,\dots,4$,  $d\times d$ real matrices. Then the representation $W_\cA$ in \eqref{WignerA} is covariant, namely
	\begin{equation}\label{C4covariance}
	W_\cA(\pi(z)f,\pi(z)g)= T_z W_\cA (f,g), \quad  f,g\in\cS(\rd),\quad  z\in\rdd,
	\end{equation}
	if and only if $\cA$ is of the form
\begin{equation}\label{A-covariant}
\cA=\left(\begin{array}{cccc}
A_{11} & I_{d\times d}-A_{11}&A_{13}&A_{13}\\
A_{21}&-A_{21}&I_{d\times d}- A^T_{11} &-A^T_{11}\\
0_{d\times d}&0_{d\times d}&I_{d\times d} &I_{d\times d}\\
-I_{d\times d}&I_{d\times d}& 0_{d\times d} &0_{d\times d}\\
\end{array}\right).
\end{equation}
	with $A_{13}=A_{13}^T$,  $A_{21}^T=A_{21}$.
	The result does not depend on the choice of the phase factor in the definition of $\mu(\cA)$ and $W_\cA$ in \eqref{WignerA}.
\end{proposition}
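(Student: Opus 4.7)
The plan is to reduce \eqref{C4covariance} to a purely linear condition on $\cA$ via the metaplectic intertwining $\mu(\cA)\pi(w)\mu(\cA)^{-1}=\pi(\cA w)$ for $w\in\mathbb{R}^{4d}$, and then to combine it with $\cA\in Sp(2d,\bR)$. A different unimodular choice of $\mu(\cA)$ multiplies both sides of \eqref{C4covariance} by the same scalar, so the phase-independence claim is automatic.

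First I will compute
\[
(\pi(z)f\otimes\overline{\pi(z)g})(x,y)= e^{2\pi i z_{2}\cdot(x-y)}(f\otimes\bar g)(x-z_{1},y-z_{1})=\pi(Mz)(f\otimes\bar g),
\]
where $M:\rdd\to\mathbb{R}^{4d}$ is the linear map $(z_{1},z_{2})\mapsto(z_{1},z_{1},z_{2},-z_{2})$. Applying $\mu(\cA)$ and invoking the intertwining yields $W_{\cA}(\pi(z)f,\pi(z)g)=\pi(\cA Mz)W_{\cA}(f,g)$. Since $T_{z}$ coincides with $\pi((z,0_{2d}))$ on functions on $\rdd$, \eqref{C4covariance} is equivalent to the single matrix identity $\cA M=\begin{pmatrix} I_{2d}\\ 0_{2d}\end{pmatrix}$. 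Reading this blockwise in the $d\times d$ sub-blocks $A_{ij}$ of $\cA$ produces the eight covariance relations $A_{12}=I-A_{11}$, $A_{14}=A_{13}$, $A_{22}=-A_{21}$, $A_{24}=A_{23}-I$, $A_{32}=-A_{31}$, $A_{34}=A_{33}$, $A_{42}=-A_{41}$, $A_{44}=A_{43}$.

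To pin down the remaining entries I will impose the symplectic relation $\cA J_{4d}\cA^{T}=J_{4d}$ in the $2\times 2$ block form $\cA=\begin{pmatrix} P&Q\\ R&S\end{pmatrix}$ of $2d\times 2d$ sub-blocks, which unpacks to $PQ^{T}=QP^{T}$, $PS^{T}-QR^{T}=I_{2d}$ and $RS^{T}=SR^{T}$. The key observation is that after the covariance step every $d$-column of $R$ and $S$ equals $\pm$ its partner, so $RS^{T}-SR^{T}$ vanishes automatically, and a short computation gives
\[
PS^{T}=\begin{pmatrix} A_{33}^{T} & A_{43}^{T}\\ 0 & 0\end{pmatrix},\qquad QR^{T}=\begin{pmatrix} 0 & 0\\ A_{31}^{T} & A_{41}^{T}\end{pmatrix}.
\]
The identity $PS^{T}-QR^{T}=I_{2d}$ then forces $A_{31}=0$, $A_{33}=I$, $A_{41}=-I$, $A_{43}=0$, while $PQ^{T}=QP^{T}$ forces $A_{13}^{T}=A_{13}$, $A_{21}^{T}=A_{21}$ and $A_{23}=I-A_{11}^{T}$ (hence $A_{24}=-A_{11}^{T}$). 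Combined with the covariance relations above this recovers exactly \eqref{A-covariant}, and the converse direction is immediate by substitution. The main technical obstacle is the blockwise bookkeeping in this second step; the computation closes neatly only thanks to the automatic vanishing of $RS^{T}-SR^{T}$, itself a consequence of the column structure imposed in the previous step.
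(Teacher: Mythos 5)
Your proof is correct and follows essentially the same route as the paper's: first reduce the covariance identity \eqref{C4covariance} to linear relations among the blocks $A_{ij}$ (the paper delegates this step to a generalization of \cite[Proposition 4.3]{CR2021}, whereas you derive it explicitly from $(\pi(z)f)\otimes\overline{\pi(z)g}=\pi(Mz)(f\otimes\bar g)$ with $Mz=(z_1,z_1,z_2,-z_2)$ together with the metaplectic intertwining), and then impose the symplectic block identities to force $A_{13}=A_{13}^T$, $A_{21}=A_{21}^T$, $A_{23}=I-A_{11}^T$, $A_{31}=A_{43}=0$, $A_{33}=I$, $A_{41}=-I$; your block computations, including the automatic vanishing of $RS^T-SR^T$, check out. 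The one point you should make explicit is that the intertwining $\mu(\cA)\pi(w)\mu(\cA)^{-1}=\pi(\cA w)$ holds only up to a $w$-dependent unimodular factor of the form $e^{\pi i\left(w_1\cdot w_2-(\cA w)_1\cdot(\cA w)_2\right)}$, so the ``if'' direction of your equivalence needs the extra observation that for $w=Mz$ one has $w_1\cdot w_2=z_1\cdot z_2-z_1\cdot z_2=0$ and, once $\cA Mz=(z,0)$, also $(\cA w)_1\cdot(\cA w)_2=0$, whence the factor is $1$ and genuine covariance (not merely covariance up to a phase) follows.
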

\begin{proof}
The equivalence of \eqref{C4covariance} and the matrix 
\begin{equation}\label{A-covariantvecchia}
\cA=\left(\begin{array}{cccc}
A_{11} & I_{d\times d}-A_{11}&A_{13}&A_{13}\\
A_{21}&-A_{21}&I_{d\times d}- A^T_{11} &-A^T_{11}\\
A_{31}&-A_{31}&A_{33} &A_{33}\\
A_{41}&-A_{41}& A_{43} &A_{43}\\
\end{array}\right).
\end{equation}
 is a straightforward generalization of the proof of \cite[Proposition 4.3]{CR2021}. We notice that in the last element of the second row of  \cite[Formula (108)]{CR2021}
the entry $A^T_{11}$ should be replaced by $-A^T_{11}$ as in \eqref{A-covariantvecchia}. We then use the matrix-block properties for symplectic matrices (see, e.g. \cite[Proposition 4.1]{folland}) to obtain \eqref{A-covariant}. First, the condition 
$$AB^T= BA^T$$
(where $A$ and $B$ are the $2d\times 2d$ blocks in \eqref{sub-block}) gives
$A_{13}^T=A_{13}$.
  The block property:
$$A^TC= C^TA$$ yields to  $A_{31}=	0_{d\times d}$ and $A_{21}^TA_{41}=A_{41}^TA_{21}$.
From $$B^TD=D^TB$$ we infer 
$A_{43}= 0_{d\times d}$. Condition $$A^TD-C^TB=I_{d\times d}$$
yields to
$A_{33}= I_{d\times d}$ and $A_{41}=-I_{d\times d}$, which, together with $A_{21}^TA_{41}=A_{41}^TA_{21}$, gives the symmetric property $A_{21}^T=A_{21}$.
\end{proof}

Similarly,  a matrix $\cA\in Sp(2d,\bR)$ having the block-decomposition in \eqref{A-covariant} is called \emph{covariant}.\par
If we introduce the real symmetric $2d\times 2d$ matrix 
\begin{equation}\label{aggiunta3}
B_\cA=\left(\begin{array}{cc}
A_{13} & \frac12 I_{d\times d} -A_{11}\\
\frac12 I_{d\times d} -A_{11}^T&-A_{21}
\end{array}\right),
\end{equation}
 the covariance property of $\cA$ can be viewed as Cohen class property as shown below. The proof is a straightforward generalization of \cite[Theorem 4.6]{CR2021}:
\begin{theorem}\label{Thaggiunta1}
	Let $\cA\in Sp(2d,\bR)$ be of the form \eqref{A-covariant}. Then
	\begin{equation}\label{aggiunta6}
	W_\cA (f,g)=W(f,g)\ast\sigma_\cA,\quad f,g\in\cS(\rd),
	\end{equation}
	where 
	\begin{equation}\label{aggiunta7}
	\sigma_\cA=\cF^{-1}(e^{-\pi i \zeta\cdot B_\cA\zeta})\in \cS'(\rdd),
	\end{equation}
	and  $B_\cA$  defined in \eqref{aggiunta3}.
\end{theorem}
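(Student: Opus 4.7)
The plan is to follow the strategy of [CR2021, Theorem 4.6], which handles the sub-case of $\tau$-Wigner representations where $A_{13}=0$ and $A_{21}=0$, and extend it by tracking the two additional symmetric blocks $A_{13}$ and $A_{21}$ that are now allowed by the covariant form \eqref{A-covariant}. The first step is to reformulate the claim on the Fourier side: since $\sigma_\cA = \cF^{-1}(e^{-\pi i \zeta\cdot B_\cA \zeta})$, the identity \eqref{aggiunta6} is equivalent to
\[
\cF W_\cA(f,g)(\zeta) = e^{-\pi i \zeta\cdot B_\cA \zeta}\, \cF W(f,g)(\zeta), \quad \zeta\in\rdd,
\]
so it suffices to prove that $\mu(\cA)$ and $\mu(\cA_W)$ (where $\cA_W$ is the matrix obtained from \eqref{A-covariant} by setting $A_{11}=\frac12 I_{d\times d}$, $A_{13}=A_{21}=0_{d\times d}$, which by \cite{CR2021} yields $\mu(\cA_W)(f\otimes \bar g)=W(f,g)$) differ, on the Fourier side, by multiplication by the chirp $e^{-\pi i \zeta\cdot B_\cA \zeta}$.

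The key step is to factorize $\cA = S\cdot \cA_W$ with $S\in Sp(2d,\bR)$ corresponding to such a Fourier multiplier. Concretely, I would compute $S=\cA\cA_W^{-1}$ using the $2\times 2$ block structure of symplectic matrices and their composition laws, and then verify that $S$ has the form $J^{-1} V_{-B_\cA} J$ (or an equivalent expression in terms of the generators of the metaplectic group from Proposition \ref{deGosson96}), with the symmetric $2d\times 2d$ matrix $B_\cA$ defined in \eqref{aggiunta3}. Using \eqref{muJ} and \eqref{lower}, it then follows that $\mu(S) = \cF^{-1} M_{e^{-\pi i \zeta\cdot B_\cA \zeta}} \cF$ acts as a Fourier multiplier with symbol $e^{-\pi i \zeta\cdot B_\cA \zeta}$, and hence as convolution with $\sigma_\cA$ on the phase-space side. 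Combining, up to an inessential phase factor,
\[
W_\cA(f,g) = \mu(\cA)(f\otimes \bar g) = \mu(S)\mu(\cA_W)(f\otimes \bar g) = \mu(S) W(f,g) = \sigma_\cA * W(f,g).
\]

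The main obstacle is the explicit computation of the product $\cA\cA_W^{-1}$ and the verification that it equals $J^{-1} V_{-B_\cA} J$ with precisely the matrix $B_\cA$ of \eqref{aggiunta3}; this is a careful block-matrix calculation in $Sp(2d,\bR)$, using the symmetry constraints $A_{13}^T=A_{13}$ and $A_{21}^T=A_{21}$ from Proposition \ref{C4covprop}. A cross-check is that the computation reduces to the one carried out in [CR2021, Theorem 4.6] when $A_{13}=A_{21}=0$, in which case $B_\cA$ collapses to the antidiagonal matrix giving the $\tau$-Wigner kernel $\sigma_{\cA_\tau}$. Finally, the temperateness $\sigma_\cA\in\cS'(\rdd)$ is immediate, since $B_\cA$ is real symmetric and hence $\zeta\mapsto e^{-\pi i \zeta\cdot B_\cA \zeta}$ is a bounded chirp whose inverse Fourier transform is a well-defined tempered distribution.
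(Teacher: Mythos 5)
Your overall strategy is correct and is almost certainly the argument behind the reference the paper gives in place of a proof (``straightforward generalization of \cite[Theorem 4.6]{CR2021}''): factor $\cA=S\,\cA_W$ with $\cA_W={\bf A}_{1/2}$ from \eqref{Aw-tau}, show that $\mu(S)$ is the Fourier multiplier with symbol $e^{-\pi i\zeta\cdot B_\cA\zeta}$, and conclude. The block computation you defer does close: using $\cA_W^{-1}=J^{-1}\cA_W^{T}J$ and multiplying out the block form \eqref{A-covariant}, one finds
\begin{equation*}
S=\cA\,\cA_W^{-1}=\begin{pmatrix} I_{2d\times 2d} & B_\cA\\ 0_{2d\times 2d} & I_{2d\times 2d}\end{pmatrix},
\end{equation*}
with precisely the $B_\cA$ of \eqref{aggiunta3}; the constraints $A_{13}^T=A_{13}$, $A_{21}^T=A_{21}$ from Proposition \ref{C4covprop} are exactly what make $B_\cA$ symmetric, hence $S\in Sp(2d,\bR)$, and for $A_{13}=A_{21}=0$, $A_{11}=(1-\tau)I_{d\times d}$ one recovers the $\tau$-Wigner kernel, as you note.

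The one step you must repair is the identification $S=J^{-1}V_{-B_\cA}J$. Taking \eqref{lower} literally ($V_C$ upper block-triangular, $\mu(V_C)$ a chirp multiplication), one gets $J^{-1}V_{-B_\cA}J=\left(\begin{smallmatrix} I&0\\ B_\cA&I\end{smallmatrix}\right)$, which is the transpose of the $S$ computed above, not $S$ itself; and $\mu(S)=\mu(V_{B_\cA})$ would then be multiplication by the chirp $e^{\pi i B_\cA z\cdot z}$, which would contradict the theorem. The resolution is that \eqref{lower} is inconsistent with the intertwining relation $\mu(\cA)\pi(z)\mu(\cA)^{-1}=c_\cA\,\pi(\cA z)$ used elsewhere in the paper: under that relation the multiplication by $e^{\pi i Cx\cdot x}$ is attached to the \emph{lower} block-triangular unipotent matrix (de Gosson's convention), and then indeed $\mu(S)=\cF^{-1}M_{e^{-\pi i\zeta\cdot B_\cA\zeta}}\cF$, i.e.\ convolution with $\sigma_\cA=\cF^{-1}(e^{-\pi i\zeta\cdot B_\cA\zeta})$ (the symbol is even, so $\cF$ and $\cF^{-1}$ agree on it). So either state the generator identity in that convention, or verify directly that $\mu$ of the upper block-triangular $S$ above is the claimed multiplier; with that adjustment, and the usual caveat that the equality holds for a suitable choice of the phase of $\mu(\cA)$, your proof is complete. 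The concluding remark on $\sigma_\cA\in\cS'(\rdd)$ is fine.
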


\begin{proposition} For $z=(z_1,z_2)$, $u=(u_1,u_2)$, we have
	\begin{equation*}
	W_\cA(\pi(z)f,\pi(u)g)=M_{(\zeta_3,\zeta_4)}T_{(\zeta_1,\zeta_2)} W_\cA( f,g) \quad  f,g\in\cS(\rd),\quad  \zeta_i\in\rdd, \,i=1,\dots,4,
	\end{equation*}
	where
	\begin{align}\label{tras-mod}
	(\zeta_1,\zeta_2)&=(A_{11}z_1+(I-A_{11})u_1+A_{13}(z_2-u_2),A_{21}(z_1-u_1)+(I-A^T_{11})z_2-A^T_{11}u_2)\\
	(\zeta_3,\zeta_4)&=(A_{31}(z_1-u_1)+A_{33}(z_2-u_2),A_{41}(z_1-u_1)+A_{43}(z_2-u_2)).\notag
	\end{align}
\end{proposition}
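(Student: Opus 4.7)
The plan is to reduce the identity to the intertwining relation between the metaplectic representation and the Schr\"odinger time-frequency shifts on $\lrdd$, and then to compute $\cA W$ block by block.

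\textbf{Step 1: Rewrite the tensor product as a single time-frequency shift on phase space.} Using $\pi(z)f(t)=e^{2\pi i z_{2}\cdot t}f(t-z_{1})$ together with its complex conjugate, a direct computation yields
\[
(\pi(z)f)\otimes\overline{\pi(u)g}=\pi(W)(f\otimes \bar g),
\]
where $\pi(W)$ denotes the time-frequency shift on $\lrdd$ associated with
\[
W=(z_{1},u_{1},z_{2},-u_{2})\in\bR^{4d};
\]
the minus sign on $u_{2}$ is produced by conjugating the modulation in the second tensor factor.

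\textbf{Step 2: Apply the symplectic-metaplectic intertwining.} The metaplectic representation satisfies the standard intertwining
\[
\mu(\cA)\,\pi(W)=\pi(\cA W)\,\mu(\cA),\qquad \cA\in Sp(2d,\bR),\quad W\in\bR^{4d},
\]
valid up to a unimodular phase absorbed in the choice of the phase of $\mu(\cA)$. Applying this identity to $f\otimes\bar g$ and recalling that $W_{\cA}(f,g)=\mu(\cA)(f\otimes\bar g)$ gives
\[
W_{\cA}(\pi(z)f,\pi(u)g)=\mu(\cA)\,\pi(W)(f\otimes\bar g)=\pi(\cA W)\,W_{\cA}(f,g).
\]

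\textbf{Step 3: Identify the four $d$-blocks of $\cA W$.} Reading $\pi(V_{1},V_{2},V_{3},V_{4})F=M_{(V_{3},V_{4})}T_{(V_{1},V_{2})}F$ on $\rdd$, it suffices to check that the four $d$-blocks of $\cA W$ coincide with $\zeta_{1},\zeta_{2},\zeta_{3},\zeta_{4}$. Using the block decomposition \eqref{blockmatrixA} and the sub-blocks \eqref{sub-block}, the matrix-vector product
\[
\cA W=\cA\,(z_{1},u_{1},z_{2},-u_{2})^{T}
\]
produces, row by row, the four expressions listed in \eqref{tras-mod} by straightforward bookkeeping.

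\textbf{Main obstacle.} The delicate point is the precise tracking of the phase factor in the intertwining: strictly, one has $\mu(\cA)\,\pi(W)\,\mu(\cA)^{-1}=c_{\cA}(W)\,\pi(\cA W)$ with $|c_{\cA}(W)|=1$. Since the proposition is stated as a pointwise equality, one fixes the phase convention in the definition of $\mu(\cA)$ (and hence of $W_{\cA}$ through \eqref{WignerA}) so that $c_{\cA}(W)\equiv 1$; this is the same convention that makes the covariance identity of Proposition \ref{C4covprop} a pointwise equality, and it is the one under which the rest of the paper operates. Once this convention is adopted, the proof reduces to the linear-algebraic computation of Step 3.
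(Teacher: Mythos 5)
Your proposal is correct and follows essentially the same route as the paper: rewrite $(\pi(z)f)\otimes\overline{\pi(u)g}$ as $\pi(z_1,u_1,z_2,-u_2)(f\otimes\bar g)$, apply the intertwining relation $\mu(\cA)\pi(W)\mu(\cA)^{-1}=c_\cA^{-1}\pi(\cA W)$, and read off the blocks of $\cA W$. The only point to make explicit is that the bookkeeping in your Step 3 reproduces \eqref{tras-mod} only when $\cA$ has the covariant form \eqref{A-covariant} (so that, e.g., $A_{12}=I-A_{11}$ and $A_{14}=A_{13}$), which is the standing assumption the paper invokes at the corresponding point of its proof.
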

\begin{proof}
Using the intertwining property (see e.g. Formula $(1.10)$ in \cite{Elena-book}) 
$$\pi(\cA \zeta)=c_\cA \mu(\cA)\pi(\zeta)\mu(\cA)^{-1},\quad \zeta\in\bR^{4d}$$
(where $c_\cA$ is a phase factor: $|c_\cA|=1$), we calculate
\begin{align*}
W_\cA(\pi(z_1,z_2)f,\pi(u_1,u_2)g)&=\mu(\cA)[\pi(z_1,u_1,z_2,-u_2)(f\otimes \bar{g})]\\
&=c_\cA^{-1}\pi(\cA(z_1,u_1,z_2,-u_2))W_\cA (f,g).
\end{align*}
The covariance of $W_\cA$  gives the matrix block-decomposition in \eqref{A-covariant} so that 
\begin{equation*}
\pi(\cA(z_1,u_1,z_2,-u_2)) =c_\cA T_{(\zeta_1,\zeta_2)} M_{(\zeta_3,\zeta_4)},
\end{equation*}
with $(\zeta_1,\zeta_2)\in\rdd$ and $(\zeta_3,\zeta_4)\in \rdd$ in \eqref{tras-mod}.
\end{proof}

Metaplectic operators are bounded on modulation spaces, as shown below.
\begin{theorem}\label{Teorema0}
	Assume $s\in\bR$, $\cA\in Sp(d,\bR)$. Then the metaplectic operator $\mu(A):\cS(\rd)\to\cS'(\rd)$ extends to a continuous operator on  $M^p_{v_s}(\rd)$, $0<p< \infty$, and for $p=\infty$ it extends to a continuous operator on $\cM^\infty_{v_s}(\rd)$. 
\end{theorem}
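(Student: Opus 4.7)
The plan is to exploit the intertwining of metaplectic operators with the Schr\"odinger representation. Recall that for every $\cA\in Sp(d,\bR)$ and $z\in\rdd$ there is a unimodular phase factor $c_\cA$ such that $\pi(\cA z)=c_\cA\,\mu(\cA)\,\pi(z)\,\mu(\cA)^{-1}$. Combining this with the definition of the STFT and the unitarity of $\mu(\cA)$ on $\lrd$, I would obtain the key transformation rule
\[
V_g\bigl(\mu(\cA)f\bigr)(z)=\langle f,\mu(\cA)^{-1}\pi(z)g\rangle=c_\cA^{-1}\,V_{\mu(\cA)^{-1}g}f(\cA^{-1}z).
\]
Since $\mu(\cA)^{-1}$ preserves $\cS(\rd)$, fixing a non-zero window of the form $g=\mu(\cA)g_0$ with $g_0\in\cS(\rd)$ — again a Schwartz function — yields the pointwise identity $|V_g(\mu(\cA)f)(z)|=|V_{g_0}f(\cA^{-1}z)|$ on $\rdd$.

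Next, I would perform the change of variables $w=\cA^{-1}z$ in the defining norm \eqref{norm-mod}. Using $|\det\cA|=1$, this gives
\[
\|\mu(\cA)f\|_{M^p_{v_s}}^p=\int_{\rdd}|V_{g_0}f(w)|^p\,v_s(\cA w)^p\,dw.
\]
Because $v_s(z)=(1+|z|^2)^{s/2}$ depends only on $|z|$, a direct estimate gives $v_s(\cA w)\leq C_{\cA,s}\,v_s(w)$ with $C_{\cA,s}=(1+\max\{\|\cA\|^{2},\|\cA^{-1}\|^{2}\})^{|s|/2}$, whence
\[
\|\mu(\cA)f\|_{M^p_{v_s}}\leq C_{\cA,s}\,\|f\|_{M^p_{v_s}},
\]
where the right-hand side is computed with the window $g_0$. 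The continuity on $M^p_{v_s}(\rd)$ for $0<p<\infty$ then follows by invoking window-independence of the modulation-space (quasi-)norm, which holds for every non-zero window in a suitable admissible class (e.g.\ any non-zero $g\in\cS(\rd)$ in the quasi-Banach range, and any non-zero $g\in M^1_{v_s}(\rd)$ in the Banach range $p\geq 1$).

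For $p=\infty$, the space $\cM^\infty_{v_s}(\rd)$ is by definition the closure of $\cS(\rd)$ in the $M^\infty_{v_s}$-norm. Since $\mu(\cA)$ maps $\cS(\rd)$ into itself, the bound above restricted to Schwartz functions extends by density to a continuous operator on $\cM^\infty_{v_s}(\rd)$. The most delicate step is the window-change lemma in the quasi-Banach range $0<p<1$: here one cannot use arbitrary non-zero $L^2$-windows but must restrict to a window class that is stable under metaplectic conjugation, such as $\cS(\rd)$ (or $M^1_{v_s}(\rd)$); once that point is settled, the whole argument reduces to an elementary linear change of variables combined with the polynomial growth of $v_s$.
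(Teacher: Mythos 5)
Your argument is correct, but it proceeds along a genuinely different route from the paper's. You reduce everything to the metaplectic covariance of time-frequency shifts: choosing the transformed window $g=\mu(\cA)g_0$ turns $|V_g(\mu(\cA)f)|$ into $|V_{g_0}f(\cA^{-1}\cdot)|$, after which a measure-preserving change of variables, the moderateness estimate $v_s(\cA w)\asymp v_s(w)$, and window-independence of the $M^p_{v_s}$ quasi-norm finish the proof uniformly in $0<p\leq\infty$. The paper instead cites \cite[Theorem 6.1.8]{Elena-book} for $1\leq p\leq\infty$ and, for $0<p<1$, discretizes via a Gabor frame and applies Schur's test to the Gabor matrix $\langle\mu(\cA)g_{m,n},g_{m',n'}\rangle$, whose super-polynomial off-diagonal decay $|\langle\mu(\cA)g_{m,n},g_{m',n'}\rangle|\lesssim v_{-r}(\cA(m,n)-(m',n'))$ is imported from \cite[Proposition 5.3]{CGNRJMPA}. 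Your approach is more elementary and self-contained at the level of the continuous STFT, yields an explicit constant, and immediately gives the two-sided equivalence of Corollary \ref{Cor3.14}; its one genuinely delicate ingredient is exactly the one you flag, namely window-independence in the quasi-Banach range $0<p<1$, which cannot be obtained from the naive change-of-window convolution inequality (since $L^p\ast L^p\not\subset L^p$ there) and must be taken from \cite{Galperin2004} for windows in $\cS(\rd)$ or $M^r_{v}$ with $r\leq\min\{1,p\}$ — a result whose own proof is again a Gabor-frame discretization. So the two proofs ultimately lean on the same discrete machinery, but yours quarantines it inside a quotable lemma, while the paper's version is the one that survives when $\mu(\cA)$ is replaced by a more general operator enjoying only Gabor-matrix decay. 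With the citation for the window-change lemma made explicit, your proof is complete.
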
 
\begin{proof}
	For $1\leq p\leq\infty$ the result follows from \cite[Theorem 6.1.8]{Elena-book}, with weight function  $\mu(z)=v_s(z)$, $s\in\bR$, and observing that $v_s\circ \cA\asymp v_s$ since $\det \cA\not=0$.
	For $0<p<1$ we can use similar  arguments as in the proof of  \cite[Theorem 6.1.8]{Elena-book}.
	Namely, consider the lattice $\Lambda=\alpha\zd\times\beta\zd$ and  two windows $g,\gamma\in\cS(\rd)$ such that the related Gabor frame operator $S_{g,\gamma}:=S_{g,\gamma}^\Lambda$ satisfies $S_{g,\gamma}=I$ on $\lrd$. If we set $g_{m,n}:=M_{\beta n} T_{\a m}g$, it remains to prove that  the matrix operator
	\begin{equation}\label{C6matop}
	\{c_{m,n}\}\longmapsto
	\sum_{m,n\in\mathbb{Z}^d} \langle \mu(\cA)
	g_{m,n},g_{m',n'}\rangle c_{m,n}
	\end{equation}
	is bounded from
	$\ell^{p}_{{v_s}}$
	into $\ell^p_{{v_s}}$.
	This follows from Schur's
	test (cf. \cite[Lemma 6.1.7 (ii)]{Elena-book}) if
	we prove that the kernel 
	\[
	K_{m',n',m,n}:=\langle \mu(\cA)
	g_{m,n},g_{m',n'}\rangle
	v_s(m',n')/v_s(m,n),
	\]
satisfies 
	\begin{equation}\label{C66f}
	K_{m',n',m,n}\in
	\ell^\infty_{m,n}\ell^p_{m',n'}.
	\end{equation}

Since 
	\begin{equation}\label{C65f}
	|\la \mu(\cA) g_{m,n}, g_{m',n'}\ra|\leq C v_{-r} (\cA(m,n)
		-(m',n')),
	\end{equation}
	for every $r\geq 0$, cf. \cite[Proposition 5.3]{CGNRJMPA} we have
	\begin{equation}\label{C68f}
	|K_{m',n',m,n}|\lesssim
	v_{-r}(\cA(m,n)-(m',n'))
	\frac{v_s(m',n')}{v_s(\cA(m,n)-(m',n'))
		v_s(m,n)}.
	\end{equation}
	Now, the last quotient in
	\eqref{C68f} is bounded, so
	we deduce $\eqref{C66f}$.
\end{proof}

\begin{corollary}\label{Cor3.14}
	Under the assumptions of Theorem \ref{Teorema0} we have
	\begin{equation}\label{(2)}
	\|\mu({\cA})f\|_{M^{p}_{v_{s}}}\asymp \|f\|_{M^{p}_{v_{s}}},\quad f\in M^{p}_{v_{s}}(\rd),
	\end{equation}
	(with $\cM^{\infty} (\rd)$ in place of $M^{\infty} (\rd)$ for $p=\infty$).	
\end{corollary}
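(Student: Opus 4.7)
The plan is to derive the equivalence of norms as a direct consequence of Theorem \ref{Teorema0} combined with the group property of the metaplectic representation. Theorem \ref{Teorema0} already yields one of the two estimates: since $\mu(\cA):M^p_{v_s}(\rd)\to M^p_{v_s}(\rd)$ is bounded (and $\cM^\infty_{v_s}$ into itself for $p=\infty$), there exists a constant $C_\cA>0$ such that
\[
\|\mu(\cA)f\|_{M^p_{v_s}}\leq C_\cA \|f\|_{M^p_{v_s}}, \qquad f\in M^p_{v_s}(\rd).
\]

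For the reverse inequality, I would exploit the fact that $Sp(d,\bR)$ is a group and that the metaplectic representation is a (projective) group homomorphism. Since $\cA\in Sp(d,\bR)$ is invertible, $\cA^{-1}\in Sp(d,\bR)$, and by Theorem \ref{Teorema0} applied to $\cA^{-1}$, the operator $\mu(\cA^{-1})$ is also bounded on $M^p_{v_s}(\rd)$ (respectively on $\cM^\infty_{v_s}$). Using the composition rule $\mu(\cA^{-1})\mu(\cA)=c\,\mathrm{Id}$ (where $c$ is a unimodular phase factor coming from the projective nature of $\mu$), we write
\[
\|f\|_{M^p_{v_s}}=\|\mu(\cA^{-1})\mu(\cA)f\|_{M^p_{v_s}}\leq C_{\cA^{-1}}\|\mu(\cA)f\|_{M^p_{v_s}},
\]
which gives the converse estimate.

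Combining the two bounds yields \eqref{(2)}, with the convention that $\cM^\infty_{v_s}(\rd)$ replaces $M^\infty_{v_s}(\rd)$ when $p=\infty$ (since Theorem \ref{Teorema0} provides continuity only on the former in that case). No additional obstacle is expected here: the argument is purely formal, and the only delicate point—namely the irrelevance of the phase factor $c$ in the above identity—is inconsequential because $|c|=1$ and the modulation-space norms are unaffected by multiplication by unimodular constants.
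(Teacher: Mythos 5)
Your proof is correct and follows essentially the same route as the paper: one inequality is Theorem \ref{Teorema0} applied to $\cA$, and the reverse is obtained by writing $f$ (up to a unimodular phase) as $\mu(\cA^{-1})\mu(\cA)f$ and applying Theorem \ref{Teorema0} to $\cA^{-1}\in Sp(d,\bR)$. Your explicit remark that the phase factor is harmless is a minor refinement of the paper's identification $\mu(\cA)^{-1}=\mu(\cA^{-1})$, but the argument is the same.
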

\begin{proof}
	Using the invertibility property of metaplectic operators,
	$$ \|f\|_{M^{p}_{v_{s}}}=\|\mu(\cA)^{-1}\mu(\cA)f\|_{M^{p}_{v_{s}}}=\|\mu(\cA^{-1})\mu(\cA)f\|_{M^{p}_{v_{s}}}\lesssim \|\mu(\cA)f\|_{M^{p}_{v_{s}}}$$
	where the last estimate follows from Theorem \ref{Teorema0} since $\cA^{-1}\in Sp(d,\bR)$. The reverse inequality is stated in Theorem  \ref{Teorema0}.
\end{proof}
\begin{theorem}\label{Teorema1}
	Assume $f,g \in M^{p}_{v_{s}}(\rd)$,  $0<p\leq \infty$, $s\geq 0$. For any $\cA\in Sp(2d,\bR)$ the $\cA$-Wigner  $W_{\cA}(f,g)$ is in $M^{p}_{v_{s}}(\rdd)$, with
	\begin{equation}\label{stimaT1}
	\|W_{\cA}(f,g)\|_{M^{p}_{v_{s}}}\lesssim \|f\|_{M^{p}}\|g\|_{M^{p}_{v_{s}}}+\|g\|_{M^{p}}\|f\|_{M^{p}_{v_{s}}}.
	\end{equation} 
\end{theorem}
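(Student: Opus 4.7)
The plan is to reduce the estimate for $W_{\cA}(f,g) = \mu(\cA)(f\otimes\bar{g})$ to a tensor-product estimate, and then bound the tensor product by a separation-of-variables argument on the STFT. Since $\cA\in Sp(2d,\bR)$ acts metaplectically on functions defined on $\rdd$, Corollary \ref{Cor3.14}, applied in dimension $2d$, gives
\[
\|W_{\cA}(f,g)\|_{M^{p}_{v_{s}}(\rdd)}= \|\mu(\cA)(f\otimes\bar{g})\|_{M^{p}_{v_{s}}(\rdd)}\asymp \|f\otimes\bar{g}\|_{M^{p}_{v_{s}}(\rdd)},
\]
(with $\cM^{\infty}_{v_{s}}$ in place of $M^{\infty}_{v_{s}}$ when $p=\infty$), so the task collapses to proving $\|f\otimes\bar{g}\|_{M^{p}_{v_{s}}(\rdd)}\lesssim \|f\|_{M^{p}}\|g\|_{M^{p}_{v_{s}}}+\|g\|_{M^{p}}\|f\|_{M^{p}_{v_{s}}}$.

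For that step I choose a separable window $G=g_{1}\otimes g_{2}$ with $g_{1},g_{2}\in\cS(\rd)\setminus\{0\}$. The STFT then factorizes exactly:
\[
|V_{G}(f\otimes\bar{g})(x_{1},x_{2},\xi_{1},\xi_{2})|=|V_{g_{1}}f(x_{1},\xi_{1})|\,|V_{g_{2}}\bar{g}(x_{2},\xi_{2})|,
\]
and the symmetry $|V_{g_{2}}\bar{g}(x_{2},\xi_{2})|=|V_{\bar{g}_{2}}g(x_{2},-\xi_{2})|$ shows that the $M^{p}_{v_{s}}$-quantity for $\bar{g}$ matches that of $g$ up to a window change (which is harmless since non-zero Schwartz windows yield equivalent (quasi-)norms). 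For the weight, the elementary pointwise inequality for $s\geq 0$,
\[
v_{s}(x_{1},\xi_{1},x_{2},\xi_{2})\lesssim_{s} v_{s}(x_{1},\xi_{1})+v_{s}(x_{2},\xi_{2}),
\]
coming from $(1+a+b)^{s/2}\lesssim_{s}(1+a)^{s/2}+(1+b)^{s/2}$, splits the weight into the two coordinate blocks.

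Plugging this into the definition of $\|\cdot\|_{M^{p}_{v_{s}}(\rdd)}$, raising to the $p$-th power (or taking the supremum when $p=\infty$) and applying Fubini--Tonelli yields
\[
\|f\otimes\bar{g}\|_{M^{p}_{v_{s}}(\rdd)}^{p}\lesssim \|f\|_{M^{p}_{v_{s}}}^{p}\|g\|_{M^{p}}^{p}+\|f\|_{M^{p}}^{p}\|g\|_{M^{p}_{v_{s}}}^{p},
\]
and the elementary bound $(a^{p}+b^{p})^{1/p}\lesssim_{p} a+b$, valid for every $0<p\leq\infty$, produces \eqref{stimaT1}. The only genuine subtlety is the quasi-Banach regime $0<p<1$: one must consistently work with the $p$-th powers of the (quasi-)norms and absorb $p$-dependent constants arising from window changes and the weight-splitting into the symbol $\lesssim$; but no harmonic-analytic obstacle is hidden there, because the tensor-product STFT factorizes as an exact product and metaplectic invariance in $M^{p}_{v_{s}}$ has already been established in Corollary \ref{Cor3.14}.
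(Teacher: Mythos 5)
Your proposal is correct and follows essentially the same route as the paper: reduce to $\|f\otimes\bar g\|_{M^p_{v_s}(\rdd)}$ via the boundedness of $\mu(\cA)$ on $M^p_{v_s}$ in dimension $2d$ (Theorem \ref{Teorema0}/Corollary \ref{Cor3.14}), split the weight as $v_s\asymp v_s\otimes 1+1\otimes v_s$, and factorize the tensor-product modulation norm. The only difference is that you spell out the factorization $\|f\otimes\bar g\|_{M^p_{v_s\otimes 1}}=\|f\|_{M^p_{v_s}}\|g\|_{M^p}$ via a separable window, which the paper invokes as a known identity.
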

\begin{proof}
	By Theorem \ref{Teorema0} (with dimension $2d$ in place of $d$) we can write
	\begin{equation}\label{ET1}
\|	W_{\cA}(f,g)\|_{M^{p}_{v_{s}}}=\|\mu(\cA)(f\otimes \bar{ g})\|_{M^{p}_{v_{s}}}\lesssim \|f\otimes \bar{ g}\|_{M^{p}_{v_{s}}}.
	\end{equation}
	Note also that $v_{s}(z,\zeta)\asymp (v_{s}\otimes 1)(z,\zeta)+(1\otimes v_{s})(z,\zeta)$, so that 
	\begin{align*}
 \|	W_{\cA}(f,g)\|_{M^{p}_{v_{s}}}&\lesssim \|f\otimes \bar{ g}\|_{M^{p}_{v_{s}\otimes 1+1\otimes v_{s}}}\\
 &\lesssim \|f\otimes \bar{ g}\|_{M^{p}_{v_{s}\otimes 1}}+ \|f\otimes \bar{ g}\|_{M^{p}_{1\otimes v_{s}}}\\
 &=\|f\|_{M^{p}_{v_{s}}} \|{ g}\|_{M^{p}}+\|f\|_{M^{p}} \|{ g}\|_{M^{p}_{v_{s}}}.
	\end{align*}
	The proof is concluded.
\end{proof}

\begin{theorem}\label{Teorema2}
	Assume $f \in M^{p}_{v_{s}}(\rd)$,  $0<p\leq 2$, $s\geq 0$, $\cA\in Sp(2d,\bR)$. Then  the following statements are equivalent:\\
	$(i)$  $f\in M^p_{v_{s}}(\rd)$\\
	$(ii)$ $W_{\cA}(f)\in M^p_{v_{s}}(\rdd)$.
\end{theorem}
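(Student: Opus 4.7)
The plan is to reduce the statement to a computation for the tensor product $f \otimes \bar f$ on $\rdd$. By Definition \ref{def4.1} we have $W_\cA(f) = \mu(\cA)(f \otimes \bar f)$, and Corollary \ref{Cor3.14} applied with ambient dimension $2d$ shows that $\mu(\cA)$ is an isomorphism on $M^p_{v_s}(\rdd)$. This gives at once
\begin{equation*}
\|W_\cA f\|_{M^p_{v_s}(\rdd)} \asymp \|f \otimes \bar f\|_{M^p_{v_s}(\rdd)},
\end{equation*}
so the theorem reduces to the equivalence $f \in M^p_{v_s}(\rd) \Longleftrightarrow f \otimes \bar f \in M^p_{v_s}(\rdd)$.

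To handle the tensor norm, I would fix a non-zero window $g \in \cS(\rd)$ and use the tensor window $G := g \otimes g \in \cS(\rdd)$. A direct expansion of the defining integral of the STFT separates the variables and yields the factorization
\begin{equation*}
|V_G(f \otimes \bar f)(x_1, x_2, \xi_1, \xi_2)| = |V_g f(x_1, \xi_1)| \cdot |V_{\bar g} f(x_2, -\xi_2)|.
\end{equation*}
Combining this with the elementary weight estimate $v_s(x_1, x_2, \xi_1, \xi_2)^p \asymp v_s(x_1, \xi_1)^p + v_s(x_2, \xi_2)^p$ (valid for $s \geq 0$ and $p > 0$, since both factors are $\geq 1$), an application of Fubini together with the change of variables $\xi_2 \mapsto -\xi_2$ gives
\begin{equation*}
\|f \otimes \bar f\|_{M^p_{v_s}(\rdd)} \asymp \|f\|_{M^p_{v_s}(\rd)} \, \|f\|_{M^p(\rd)},
\end{equation*}
where I have also used that $\bar g \in \cS(\rd)\setminus\{0\}$ is a legitimate window and yields an equivalent modulation-space norm.

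The conclusion is then immediate. The implication $(i) \Rightarrow (ii)$ follows from Theorem \ref{Teorema1} with $g = f$, together with the embedding $M^p_{v_s}(\rd) \hookrightarrow M^p(\rd)$ valid for $s \geq 0$. For the converse, $W_\cA f \in M^p_{v_s}(\rdd)$ yields $f \otimes \bar f \in M^p_{v_s}(\rdd) \subseteq M^p(\rdd)$; the unweighted version of the tensor identity ($s = 0$) then gives $\|f\|_{M^p(\rd)}^2 < \infty$, so $f \in M^p(\rd)$. If $f = 0$ there is nothing to prove; otherwise $\|f\|_{M^p(\rd)}$ is a finite positive constant and the weighted tensor identity forces $\|f\|_{M^p_{v_s}(\rd)} < \infty$. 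The main technical subtlety is the justification of the tensor-window factorization in the quasi-Banach range $0 < p < 1$, where window-interchange equivalences for $M^p_{v_s}$ require more than the standard Banach-space arguments; the assumption $p \leq 2$ provides the comfortable inclusion $M^p_{v_s}(\rd) \subseteq L^2(\rd)$, ensuring that $f \otimes \bar f$ is genuinely square-integrable and that $\mu(\cA)$ acts on it as a unitary operator in the classical sense.
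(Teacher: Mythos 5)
Your proof is correct and follows essentially the same route as the paper's: both reduce to $\|W_\cA f\|_{M^p_{v_s}}\asymp\|f\otimes\bar f\|_{M^p_{v_s}}$ via Corollary \ref{Cor3.14} and then exploit the tensor factorization $\|f\otimes\bar f\|_{M^p_{v_s}}\asymp\|f\|_{M^p_{v_s}}\|f\|_{M^p}$ together with the splitting $v_s(z,\zeta)\asymp v_s(z)+v_s(\zeta)$. The only (harmless) divergence is in closing the converse: the paper divides by $\|f\|_{M^p}$ and bounds it below by $\|f\|_{L^2}$ using $M^p\hookrightarrow L^2$ for $p\le 2$, whereas you first run the unweighted case to get $f\in M^p$ and then bootstrap, which is equally valid (and in fact does not use $p\le 2$ at that step).
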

\begin{proof}
	If $f(t)=0$ for a.e. $t$ then $W_{\cA}(f)\phas=0$ and the equivalence  is trivially  true. Let us now consider the non-trivial case. \par\noindent
	$(i)\Rightarrow (ii)$. It is a consequence of Theorem \ref{Teorema1}. In particular, from \eqref{stimaT1} for $f=g$ we have
	$$\|W_{\cA}(f)\|_{M^{p}_{v_{s}}}\lesssim \|f\|_{M^{p}_{v_{s}}} \|{f}\|_{M^{p}}\lesssim \|f\|^2_{M^{p}_{v_{s}}}.$$
		$(ii)\Rightarrow  (i)$. Fixing $f=g$ and using \eqref{(2)},
		$$\|W_{\cA}(f)\|_{M^{p}_{v_{s}}}=\|\mu(\cA)(f\otimes\bar{ f})\|_{M^{p}_{v_{s}}}\asymp\|f\otimes\bar{ f}\|_{M^{p}_{v_{s}}}.$$
		Note that
		$$\|f\otimes\bar{ f}\|_{M^{p}_{v_{s}\otimes1}}\asymp \|f\|_{M^{p}_{v_{s}}}\|f\|_{M^{p}}.$$
		So, for $f\in L^2(\rd)\setminus\{0\}$, we have
			$$ \|f\|_{M^{p}_{v_{s}}}\asymp \frac1{\|f\|_{M^{p}}}\|f\otimes\bar{ f}\|_{M^{p}_{v_{s}\otimes1}}\lesssim \frac1{\|f\|_{L^2}}\|f\otimes\bar{ f}\|_{M^{p}_{v_{s}}},$$
			since 
			$\|f\|_{L^2}\lesssim \|f\|_{M^p}$, $0<p\leq 2$.
\end{proof}
\begin{theorem}[Inversion formula for the $\cA$-Wigner distribution] 
	Consider $g_1,g_2\in L^2(\rd)$ with $\la g_1,g_2\ra\not=0$, $\cA\in Sp(2d,\bR)$. Then, for any $f\in\lrd$, 
	\begin{equation}\label{invAW}
	f=\frac{1}{\la g_2,g_1\ra}\intrd \mu(\cA^{-1}) W_{\cA} (f,g_1)\phas  g_2\,d\xi.
	\end{equation}
\end{theorem}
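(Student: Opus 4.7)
The statement is essentially a tautology once we unravel the definition, but care is needed with the order of operations and with Fubini. The plan is to apply $\mu(\cA^{-1})$ to both sides of the defining identity $W_\cA(f,g_1)=\mu(\cA)(f\otimes\bar{g_1})$, and then to recognize the right-hand side of \eqref{invAW} as a partial integration against $g_2$ in the second variable of the tensor product $f\otimes\bar{g_1}$.

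First, I would invoke Definition \ref{def4.1} to write
\[
W_{\cA}(f,g_1)=\mu(\cA)(f\otimes\bar{g_1}).
\]
With a consistent choice of phase factor (so that $\mu(\cA^{-1})\mu(\cA)=\mathrm{Id}$ on $L^2(\rdd)$; any such choice is admissible since the statement involves $\mu(\cA^{-1})$ and $\mu(\cA)$ together) Proposition \ref{def bilA triple} guarantees that both $W_{\cA}(f,g_1)$ and $\mu(\cA^{-1})W_{\cA}(f,g_1)$ lie in $L^2(\rdd)$, and
\[
\mu(\cA^{-1})W_{\cA}(f,g_1)(x,\xi)=(f\otimes\bar{g_1})(x,\xi)=f(x)\overline{g_1(\xi)}
\]
as an $L^2(\rdd)$-identity.

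Next, I would substitute this expression into the right-hand side of \eqref{invAW} and integrate in $\xi$. The integrand $f(x)\overline{g_1(\xi)}g_2(\xi)$ is absolutely integrable in $\xi$ for a.e.\ $x$ by Cauchy--Schwarz (since $g_1,g_2\in L^2(\rd)$), and
\[
\intrd f(x)\overline{g_1(\xi)}\,g_2(\xi)\,d\xi
= f(x)\intrd g_2(\xi)\overline{g_1(\xi)}\,d\xi
= f(x)\,\langle g_2,g_1\rangle.
\]
Dividing by $\langle g_2,g_1\rangle\neq 0$ then yields \eqref{invAW}.

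The only real subtlety is the phase factor in the definition of $\mu(\cA)$: strictly speaking $\mu(\cA^{-1})$ is only determined up to a unimodular constant, and one must fix the choices so that $\mu(\cA^{-1})\mu(\cA)=\mathrm{Id}$ (rather than a non-trivial phase). This is a free choice already present throughout the paper, so it is a matter of convention rather than an obstruction. Apart from this, the argument is a one-line combination of Definition \ref{def4.1}, the unitarity of $\mu(\cA)$ on $L^2(\rdd)$ from Proposition \ref{def bilA triple}, and Fubini's theorem.
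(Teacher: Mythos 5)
Your argument is correct and coincides with the paper's own proof: both apply $\mu(\cA^{-1})$ to recover $f\otimes\bar{g_1}$ and then integrate against $g_2$ in the second variable to produce the factor $\langle g_2,g_1\rangle$. The extra remarks on the phase convention and absolute integrability are sound but do not change the route.
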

\begin{proof}
	Observing that
	$$ \mu(\cA^{-1}) W_{\cA} (f,g_1)= \mu(\cA^{-1}) \mu(\cA) (f\otimes\bar{ g_1})=f\otimes\bar{ g_1},$$
	we can write
	$$\intrd \mu(\cA^{-1}) W_{\cA} (f,g_1)(x,\xi) g_2(\xi)\,d\xi=\intrd f(x)\bar{ g_1}(\xi)g_2(\xi)\,d\xi=f(x)\la g_2,g_1\ra$$
	and the equality \eqref{invAW} follows.
\end{proof}
%If we introduce   the matrix $\mathcal{A}_{\tau}'\in\Spnr$
%\begin{equation}\label{A'}
%\mathcal{A}_{\tau}'=\left(\begin{array}{cc}
%0_{d\times d} & \left(1-\tau\right)I_{d\times d}\\
%-\tau I_{d\times d} & 0_{d\times d}
%\end{array}\right),
%\end{equation}
% properties  \cite[Lemma 2.2]{CNT}
%\begin{lemma}\label{atau} For $\tau\in [0,1]$ we have
%	$$\left(\mathcal{A}_{\tau}'\right)^{\top}=-\mathcal{A}_{1-\tau}'$$
%	 and  $$\mathcal{A}_{\tau}'+\mathcal{A}_{1-\tau}'=J.
%	$$ 
%\end{lemma}
\begin{proposition}\label{Prop3}
	For $f,g_1,g_2,g_3\in \lrd$, $\cA\in Sp(2d,\bR)$, we have
	\begin{equation}
	V_{g_3} f (w)= \frac{1}{\la g_2, g_1 \ra}\la W_{\cA}(f,g_1),W_{\cA}(\pi(w)g_3,g_2)\ra_{L^2(\rdd)}.
	\end{equation}
\end{proposition}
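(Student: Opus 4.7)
The plan is to derive this identity as a direct application of Moyal's identity (Proposition \ref{Moyal}), combined with the definition of the STFT as an inner product with a time-frequency shift.

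First I would recall the standard identification
\[
V_{g_3}f(w) = \la f, \pi(w) g_3\ra_{L^2(\rd)}, \quad w \in \rdd,
\]
which follows directly from the definition \eqref{STFTdef} and $\pi(w) = M_{w_2}T_{w_1}$ (observing that $\overline{M_{w_2} T_{w_1} g_3(y)} = \overline{g_3(y-w_1)}e^{-2\pi i y w_2}$ matches the integrand of $V_{g_3}f(w)$).

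Next, I would apply Moyal's identity \eqref{MoyalF} to the pair $(f_1,f_2,g_1,g_2) \leftrightsquigarrow (f, \pi(w)g_3, g_1, g_2)$, which gives
\[
\la W_{\cA}(f,g_1),\, W_{\cA}(\pi(w)g_3,g_2)\ra_{L^2(\rdd)} = \la f, \pi(w)g_3\ra_{L^2(\rd)}\,\overline{\la g_1,g_2\ra_{L^2(\rd)}}.
\]
Using $\overline{\la g_1,g_2\ra} = \la g_2,g_1\ra$ and the STFT identification above, this becomes
\[
\la W_{\cA}(f,g_1),\, W_{\cA}(\pi(w)g_3,g_2)\ra_{L^2(\rdd)} = V_{g_3}f(w)\,\la g_2,g_1\ra,
\]
and dividing by $\la g_2,g_1\ra \neq 0$ yields the claim.

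There is essentially no obstacle here: the entire content of the proposition is encoded in Moyal's identity for $W_{\cA}$, which has already been established. The only point requiring minimal care is the correct placement of complex conjugates arising from the conjugate-linearity of $\la\cdot,\cdot\ra$ in the second entry, but these are automatically reconciled by writing $\overline{\la g_1,g_2\ra} = \la g_2,g_1\ra$.
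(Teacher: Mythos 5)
Your proof is correct. The only substantive point to check is the identification $V_{g_3}f(w)=\la f,\pi(w)g_3\ra$, which holds with the standard convention $T_{w_1}g(t)=g(t-w_1)$ (the paper's displayed formula $T_{z_1}f(t)=f(z_1-t)$ is evidently a typo, since the translation is used in the standard sense everywhere else, e.g.\ in $T_{E_\cA(w)}W_{\cA}(f,g)(z)=W_{\cA}(f,g)(z-E_\cA w)$); granting that, Moyal's identity with $(f_1,f_2)=(f,\pi(w)g_3)$ gives the claim in one line, after the conjugate bookkeeping $\overline{\la g_1,g_2\ra}=\la g_2,g_1\ra$ that you carry out correctly.

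Your route is genuinely different from the paper's, though only mildly so. The paper derives the proposition from its inversion formula \eqref{invAW}, pairing $f=\frac{1}{\la g_2,g_1\ra}\intrd \mu(\cA^{-1})W_\cA(f,g_1)(\cdot,\xi)\,g_2(\xi)\,d\xi$ against $\pi(w)g_3$ and then moving $\mu(\cA^{-1})=\mu(\cA)^*$ to the other side of the $L^2(\rdd)$ inner product to reassemble $W_\cA(\pi(w)g_3,g_2)$. Both arguments ultimately rest on the same fact --- unitarity of $\mu(\cA)$ on $L^2(\rdd)$ --- since Moyal's identity and the inversion formula are each one-line consequences of it. What your version buys is transparency: it bypasses the iterated-integral manipulations (and the attendant Fubini justification) of the paper's proof and makes the hypothesis $\la g_2,g_1\ra\neq 0$, which is implicit in the statement, the only thing left to flag.
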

\begin{proof}
From the preceding inversion formula \eqref{invAW} we have
\begin{align*}
V_{g_3} f (w)&=\frac{1}{\la g_2, g_1 \ra}\intrdd  W_{\cA}(f,g_1)W_{\cA}(\pi(w)g_1,g_2)\overline{\pi(w)g_3(x)} dx d\xi\\
&=\frac{1}{\la g_2, g_1 \ra}\intrdd W_{\cA}(\pi(w)g_1,g_2)(x,\xi)\overline{\mu(\cA)(\overline{g_2(\xi)}\pi(w)g_3(x))}dx d\xi,
\end{align*}
since $\mu(\cA^{-1})=\mu(\cA)^*$. Observe that the integrals above are absolutely convergent integrals since $\pi(w)$ is an isometry on $\lrd$ and $W_\cA: L^2(\rd)\times L^2(\rd)\to L^2(\rdd)$, by Proposition \ref{def bilA triple}. This concludes the proof.
\end{proof}

Proposition \ref{Prop3}  suggests the following definition:
\begin{definition}\label{semi-covariant}
Given $\cA\in Sp(2d,\bR)$, we say that $W_{\cA}$ is \emph{\bf shift-invertible}  if
	$$ |W_{\cA}(\pi(w)f,g)|=|T_{E_\cA(w)}W_{\cA}(f,g)|,\quad f,g\in\lrd,\quad w\in\rdd,$$
	for some $E_\cA\in GL(2d,\bR)$, with $$T_{E_\cA(w)}W_{\cA}(f,g)(z)=W_{\cA}(f,g)(z-E_\cA w),\quad w,z\in\rdd.$$
\end{definition}

Not every $\cA$-Wigner satisfies the above property. Let us  compute $W_{\cA}(\pi(w)f,g)$ explicitly. Consider $\cA$ with the block decomposition in \eqref{blockmatrixA}, and sub-bocks \eqref{sub-block}. Easy calculations  and the intertwining formula $\mu(\cA)\pi(z)=c_{\cA}\pi(\cA z)\mu(\cA)$ with $|c_{\A}|=1$ show, for $w=(w_1,w_2)$,
\begin{align*}
W_{\cA}&(\pi(w)f,g)=\mu(\cA)((\pi(w)f)\otimes\bar{ g})\\
&=\mu(\cA)\pi(w_1,0,w_2,0)(f\otimes\bar{g})\notag\\
&=c_{\cA} \pi (\cA (w_1,0,w_2,0)^T)W_{\cA}(f,g),\notag\\
&=c_{\cA}\pi(A(w_1,0)^T+B(w_2,0)^T, C(w_1,0)^T+D(w_2,0)^T)W_{\cA}(f,g)\notag\\
&=c_{\cA}\pi(A_{11}w_1,A_{21}w_1)+(B_{11}w_2,B_{21}w_2), (C_{11}w_1,C_{21}w_1)+(D_{11}w_2,D_{21}w_2))W_{\cA}(f,g)\notag\\
&=c_{\cA}\pi(A_{11}w_1+B_{11}w_2,A_{21}w_1+B_{21}w_2, C_{11}w_1+D_{11}w_2,C_{21}w_1+D_{21}w_2)W_{\cA}(f,g)\notag\\
&=c_{\cA}M_{C_{11}w_1+D_{11}w_2,C_{21}w_1+D_{21}w_2}T_{A_{11}w_1+B_{11}w_2,A_{21}w_1+B_{21}w_2}W_{\cA}(f,g).\notag
\end{align*}
so that 
$$|W_{\cA}(\pi(w)f,g)|=|T_{A_{11}w_1+B_{11}w_2,A_{21}w_1+B_{21}w_2}W_{\cA}(f,g)|.$$
Hence the matrix $E_\cA$ in Definition \ref{semi-covariant} is given by
	\begin{equation}\label{matrixE}
E_\cA=\left(\begin{array}{cc}
A_{11} & B_{11}\\
A_{21} & B_{21}
\end{array}\right).
\end{equation}
$W_{\cA}$ is shift-invertible if and only if the matrix $E_\cA$ is invertible.\par

\begin{remark}\label{rem2.20}
(i) If  $\cA\in Sp(2d,\bR)$ is a covariant matrix  then
\begin{equation}\label{E-covariant}
E_\cA=\left(\begin{array}{cc}
	A_{11} & 	A_{13}\\
	A_{21} & I_{d\times d}-	A_{11}^T
\end{array}\right).
\end{equation}
Hence if  $E_\cA$  is invertible the  covariant matrix $\cA$ is  shift-invertible.\par\noindent
(ii) For $\tau$-Wigner distributions the matrix $\cA={\bf A}_{\tau}$ is shown in \eqref{Aw-tau}.
The related matrix  $E_\tau:=E_{{\bf A}_{\tau}}$ is
$$
E_\tau=\left(\begin{array}{cc}
(1-\tau) I_{d\times d} & 0_{d\times d}\\
0_{d\times d} & \tau I_{d\times d}
\end{array}\right),
$$
so  that $ {\bf A}_{\tau}$ is covariant for every $\tau\in\bR$, whereas  it is shift-invertible for $\tau\in \bR\setminus\{0,1\}$.\par\noindent
(iii) For $f,g\in\lrd$, $V_g f=\mu({\bf A_{ST}})(f\otimes \bar{g})$, 
and we have, c.f. \cite[Proposition 1.2.15]{Elena-book}, 
\begin{equation}\label{E1}
|V_g \pi(w)f|=|T_{w}V_gf|,\quad w\in\rdd.
\end{equation}
This implies that $\cA=A_{ST}$ in \eqref{A-FT}
is shift-invertible. Observe that in this case, $E_{ST}:=E_{\bf A_{ST}}$ is
$$ E_{ST}=I_{2d\times2d}=\left(\begin{array}{cc}
 I_{d\times d} & 0_{d\times d}\\
0_{d\times d} &I_{d\times d}
\end{array}\right) 
$$
according to relation in \eqref{E1}. We notice that ${\bf A_{ST}}$ is not covariant.
\end{remark}

\begin{proposition}[Relation between the matrix $E_\cA$  and $B_\cA$]\label{E-B}
 If  $\cA\in Sp(2d,\bR)$ is a covariant matrix with related matrix $E_\cA$ in \eqref{E-covariant} and symmetric matrix  $B_\cA$ in \eqref{aggiunta3}, then
  \begin{equation}\label{E-B-eq}
 E_\cA J+\frac12 J= B_\cA.
 \end{equation}
\end{proposition}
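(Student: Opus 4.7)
The plan is to prove this identity by a direct block matrix computation, using the explicit block decompositions of $E_\cA$, $B_\cA$, and $J$ that have already been written down in the paper. Both sides of \eqref{E-B-eq} are $2d \times 2d$ matrices with a natural $2 \times 2$ block structure of $d \times d$ blocks, so the verification reduces to checking four $d \times d$ block identities.

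First I would insert the three explicit matrices into the left-hand side: $E_\cA$ from \eqref{E-covariant}, $J$ from \eqref{J}, and $B_\cA$ from \eqref{aggiunta3}. Then I would carry out the $2 \times 2$ block product $E_\cA J$, which is easy because $J$ contains only identity and zero blocks, so the multiplication simply permutes and sign-flips the columns of $E_\cA$. Next, I would add the scalar multiple $\frac{1}{2}J$ block by block, so that the four resulting $d \times d$ blocks involve only $A_{11}$, $A_{13}$, $A_{21}$, and multiples of $I_{d \times d}$. Finally, I would compare each of these four blocks with the corresponding block of $B_\cA$ and invoke the symmetry relations $A_{13}^T = A_{13}$ and $A_{21}^T = A_{21}$ (already established in Proposition \ref{C4covprop}) whenever needed, in particular to confirm that the off-diagonal blocks of $B_\cA$ are consistent transposes of each other.

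There is no conceptual difficulty in this proof; the identity is essentially an algebraic statement about the free parameters $A_{11}, A_{13}, A_{21}$ of a covariant symplectic matrix, and it holds formally without any further hypothesis on $\cA$ beyond covariance. The only point requiring care, and the sole potential obstacle, is to respect the sign convention of the symplectic block $J = \begin{pmatrix} 0_{d\times d} & I_{d\times d} \\ -I_{d\times d} & 0_{d\times d} \end{pmatrix}$ when distributing the product, since a single misplaced sign in the bottom-left block would invalidate the identification with $B_\cA$. As a consistency check, one could also take transposes of both sides and use $J^T = -J$ together with the symmetry $B_\cA^T = B_\cA$ to reproduce an equivalent identity, thereby catching any sign error.
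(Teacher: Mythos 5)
Your plan is methodologically identical to the paper's proof (direct $2\times 2$ block computation with the explicit forms of $E_\cA$, $J$, $B_\cA$), but the computation you describe does not close as stated, and the sign pitfall you yourself flag is exactly where it breaks. With the convention \eqref{J}, right multiplication by $J$ sends $\begin{pmatrix} a&b\\ c&d\end{pmatrix}$ to $\begin{pmatrix} -b&a\\ -d&c\end{pmatrix}$ (the new first column is \emph{minus} the old second column), so from \eqref{E-covariant} one gets
\begin{equation*}
E_\cA J+\frac12 J=\begin{pmatrix} -A_{13} & A_{11}+\frac12 I_{d\times d}\\[2pt] A_{11}^T-\frac32 I_{d\times d} & A_{21}\end{pmatrix}\neq B_\cA .
\end{equation*}
In fact the left-hand side cannot equal $B_\cA$ for a structural reason: using $A_{13}^T=A_{13}$ and $A_{21}^T=A_{21}$ one computes $E_\cA J+JE_\cA^T=J$, hence $\bigl(E_\cA J+\frac12 J\bigr)-\bigl(E_\cA J+\frac12 J\bigr)^T=2J\neq 0$, whereas $B_\cA$ is symmetric. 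This is precisely what your proposed transpose consistency check detects, so you should actually run it instead of asserting that the identity ``holds formally.''

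The identity that does hold is
\begin{equation*}
E_\cA J^{-1}+\frac12 J=B_\cA,\qquad\text{equivalently}\qquad \Bigl(\frac12 I_{2d\times 2d}-E_\cA\Bigr)J=B_\cA,
\end{equation*}
since $J^{-1}=-J$ gives $E_\cA J^{-1}=\begin{pmatrix} A_{13} & -A_{11}\\ I_{d\times d}-A_{11}^T & -A_{21}\end{pmatrix}$, and adding $\frac12 J$ reproduces \eqref{aggiunta3} exactly; the $\tau$-Wigner case confirms this (both off-diagonal blocks become $(\tau-\frac12)I_{d\times d}$, while the version with $J$ would give $(\frac32-\tau)I_{d\times d}$ and $-(\tau+\frac12)I_{d\times d}$). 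Note that the paper's own displayed computation makes the same slip: the matrix it labels $E_\cA J$ is in fact $E_\cA J^{-1}$. So your outline is sound, but the proof must terminate in the corrected formula; reassuringly, the only downstream use (Proposition \ref{Prop4.5}) needs just that $E_{\cA_t}$ is similar to $E_\cA$, and that conclusion survives the sign correction.
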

\begin{proof}
It is a simple computation. In fact,
\begin{align*}
E_\cA J+\frac{J}{2}&=\left(\begin{array}{cc}
A_{13} & -A_{11}	\\
I_{d\times d}-	A_{11}^T & -A_{21}
\end{array}\right)+\frac12 \left(\begin{array}{cc}
0_{d\times d} & I_{d\times d}	\\
-I_{d\times d} & 0_{d\times d}
\end{array}\right)\\
&=\left(\begin{array}{cc}
A_{13}  & -A_{11}+\frac12 I_{d\times d}	\\
\frac12 I_{d\times d}-A_{11}^T & -A_{21}
\end{array}\right)={B_\cA}.
\end{align*}
This concludes the proof.
\end{proof}

Observe that the next result extends \cite[Theorem  3.11]{CR2021} to every $0<p\leq \infty$.
\begin{theorem}\label{mainE}
	Fix $g\in \cS(\rd)$. For $\cA\in Sp(2d,\bR)$ we have the following issues:\par
	(i) For $0<p<2$, if $f\in M^p_{v_{s}}(\rd)$ then $W_{\cA}(f,g)\in L^p_{v_{s}}(\rdd)$.\par
	(ii) Let $W_{\cA}$ be  shift-invertible according to the preceding definition. Then,\par
	(iia) For $s\geq0$, $1\leq p\leq2$, 
	\begin{equation}\label{norm-eq-p}
	f\in M^p_{v_{s}}(\rd)\,\,\Leftrightarrow \,\,W_{\cA}(f,g)\in L^p_{v_{s}}(\rdd),
	\end{equation}
	with equivalence of norms  $\|f\|_{M^p_{v_{s}}}\asymp \|W_{\cA}(f,g)\|_{L^p_{v_{s}}}$.\par 
	(iib) For $1\leq p\leq\infty$, if $W_{\cA}(f,g)	\in L^p_{v_{s}}(\rdd)$ then $f\in M^p_{v_{s}}(\rd)$.\par
	(iic) For $0<p<1$, if $W_{\cA}(f,g)	\in L^p_{v_{s}}(\rdd)$ and there exists a Gabor frame $\cG(\gamma,\Lambda)$ for $\lrd$  with $\gamma\in\cS(\rd)$ such that the sequence $W_{\cA}(f,\gamma)(\lambda)\in \ell^p_{v_{s}}(\Lambda)$, then $f\in  M^p_{v_{s}}(\rd)$.\par
\end{theorem}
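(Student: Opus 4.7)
The proof splits naturally into three pieces: part (i), the pair (iia)--(iib), and part (iic).

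\textbf{Part (i).} The plan is to combine Theorem~\ref{Teorema1} with the standard embedding $M^p_{v_s}(\rdd)\hookrightarrow L^p_{v_s}(\rdd)$, valid for $0<p\leq 2$. Since the window $g\in\cS(\rd)\subset M^p_{v_s}(\rd)$, Theorem~\ref{Teorema1} yields $\|W_\cA(f,g)\|_{M^p_{v_s}}\lesssim \|f\|_{M^p_{v_s}}$, and the embedding closes the argument.

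\textbf{Parts (iia) and (iib).} The $\Rightarrow$ direction in (iia) is a special case of (i), so the substance is the converse estimate $\|f\|_{M^p_{v_s}}\lesssim \|W_\cA(f,g)\|_{L^p_{v_s}}$. I start from Proposition~\ref{Prop3} with $g_1=g$ and auxiliary windows $g_2,g_3\in\cS(\rd)$ chosen so that $\langle g_2,g_1\rangle\neq 0$. Passing the modulus inside the inner product and invoking shift-invertibility (Definition~\ref{semi-covariant}) gives the pointwise bound
\begin{equation*}
|V_{g_3}f(w)|\lesssim \bigl(|W_\cA(f,g)|\ast K\bigr)(E_\cA w),\qquad K(z):=|W_\cA(g_3,g_2)(-z)|.
\end{equation*}
Since $E_\cA\in GL(2d,\bR)$ and $v_s\circ E_\cA^{-1}\asymp v_s$, the linear change of variables $u=E_\cA w$ converts the weighted $L^p$-norm on the left into that of the convolution on the right. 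For $1\leq p\leq\infty$ and $s\geq 0$, the submultiplicativity of $v_s$ allows the weighted Young inequality $L^p_{v_s}\ast L^1_{v_s}\hookrightarrow L^p_{v_s}$; Theorem~\ref{Teorema1} further gives $W_\cA(g_3,g_2)\in M^1_{v_s}(\rdd)\hookrightarrow L^1_{v_s}(\rdd)$, so $\|K\|_{L^1_{v_s}}<\infty$. This establishes (iib) and, combined with (i), the norm equivalence in (iia).

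\textbf{Part (iic) and the main obstacle.} In the quasi-Banach regime $0<p<1$ the classical Young convolution inequality fails, so a discrete substitute is required. The plan is to use the Gabor-frame characterization $\|f\|_{M^p_{v_s}}\asymp \|V_{\tilde\gamma}f(\lambda)\|_{\ell^p_{v_s}(\Lambda)}$ in the quasi-Banach range, where $\tilde\gamma\in\cS(\rd)$ is the canonical dual window of $\gamma$. Applying Proposition~\ref{Prop3} with $g_1=g_2=\gamma$ and $g_3=\tilde\gamma$, and invoking shift-invertibility, one bounds $|V_{\tilde\gamma}f(\lambda)|$ by an inner product which—after slicing the defining integral over translates of a fundamental domain for $\Lambda$ and exploiting the Schwartz decay of $W_\cA(\tilde\gamma,\gamma)$—is dominated by a discrete convolution between the sequence $\{W_\cA(f,\gamma)(\mu)\}_{\mu\in\Lambda}\in\ell^p_{v_s}$ (the standing hypothesis) and a rapidly decaying lattice kernel. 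The quasi-Banach discrete Young inequality $\ell^p_{v_s}\ast\ell^p_{v_s}\hookrightarrow\ell^p_{v_s}$, valid for $0<p\leq 1$ by subadditivity of $t\mapsto t^p$, then delivers the claim. The main obstacle lies precisely in this last reduction from the continuous integral in Proposition~\ref{Prop3} to a bona fide discrete convolution on $\Lambda$, where the supplementary hypothesis $W_\cA(f,g)\in L^p_{v_s}(\rdd)$ is used to control the residual tails between the lattice samples.
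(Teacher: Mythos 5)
Your proposal follows essentially the same route as the paper's proof. Part (i) is Theorem \ref{Teorema1} plus the embedding of $M^p_{v_s}$ into $L^p_{v_s}$ for $0<p\leq 2$ (the paper passes through the intermediate space $M^p_{v_s\otimes 1}$, but the idea is identical); parts (iia)--(iib) use Proposition \ref{Prop3}, shift-invertibility to turn $W_\cA(\pi(w)g_3,g_2)$ into a translate by $E_\cA w$, the change of variables with $v_s\circ E_\cA^{-1}\asymp v_s$, and weighted Young's inequality --- exactly as in the paper, which takes $g_1=g_3=g$ and invokes $W_\cA(g_1,g_2)\in\cS(\rdd)$ where you invoke $M^1_{v_s}\hookrightarrow L^1_{v_s}$ (both suffice). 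For (iic) you again mirror the paper's strategy (sample $V_\gamma f$ on $\Lambda$ and apply $\ell^p_{v_s}\ast\ell^p_{v_s}\hookrightarrow\ell^p_{v_s}$); the ``main obstacle'' you single out, namely converting the continuous convolution bound coming from Proposition \ref{Prop3} into a bona fide discrete convolution on the lattice, is precisely the step the paper performs silently by writing the sampled convolution directly in discrete form, and your plan (slicing over a fundamental domain, using the Schwartz decay of $W_\cA(\gamma,g_2)$ and the $L^p_{v_s}$ hypothesis to control the off-lattice contribution) is the standard way to justify it --- so nothing in your outline would fail, but be aware that this step remains a sketch in your write-up just as it is left implicit in the paper.
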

\begin{proof}
(i) Let us recall that $\cS(\rd)\subset M^p_{v_{s}}(\rd)$, $0<p\leq\infty$, $s\in\bR$. Assume first $f\in M^p_{v_{s}}(\rd)$, $s\geq0$. Then by Theorem \ref{Teorema1} we have
$$\|W_{\cA}(f,g)\|_{M^{p}_{v_{s}}}\lesssim \|f\|_{M^{p}_{v_{s}}}\|g\|_{M^{p}_{v_{s}}}.$$
Since $(v_{s}\otimes 1)(x,\xi)\lesssim v_{s}(x,\xi)$ for $s\geq0$,  the inclusion relations for modulation spaces (cf., e.g., \cite[Theorem 2.4.17]{Elena-book} and \cite[Proposition 1.2]{ToftquasiBanach2017}) yield
$$M^{p}_{v_{s}}(\rdd) \hookrightarrow M^{p}_{v_{s}\otimes 1}(\rdd),$$
For $0< p\leq 2$ (see \cite[Proposition 2.9]{Toftweight2004}, whereas the case $0<p<1$ is a direct consequence of  \cite[Theorem 2.4]{ToftACHA19} with $\mathcal{B}=L^p_{v_{s}}$)
$$M^{p}_{v_{s}\otimes 1}(\rdd) \hookrightarrow L^{p}_{v_{s}}(\rdd),$$
hence $W_{\cA}(f,g)\in L^{p}_{v_{s}}(\rdd)$.\par
(ii) Assume now that $W_\cA$ is shift-invertible and $W_\cA (f,g)\in L^p_{v_{s}}(\rdd)$.  Then, by Proposition \ref{Prop3}, with $g_1=g_3$, 
\begin{align*}
	|V_{g_1} f (w)|&\lesssim\frac{1}{|\la g_2, g_1 \ra|}|\la W_{\cA}(f,g_1),W_{\cA}(\pi(w)g_1,g_2)\ra_{L^2(\rdd)}|\\
	&\lesssim \intrdd |W_{\cA}(f,g_1)|(u)|W_{\cA}(\pi(w)g_1,g_2)|(u)du\\
	&\lesssim \intrdd |W_{\cA}(f,g_1)|(u) |W_{\cA}(g_1,g_2)|(u-E_\cA w)du\\
	&\lesssim \intrdd |W_{\cA}(f,g_1)|(u) |[W_{\cA}(g_1,g_2)]^*|(E_\cA w-u)du
\end{align*}
Hence
\begin{align*}\|f\|_{M^p_{v_{s}}}&\asymp \|V_{g_1} f \|_{L^p_{v_{s}}}\lesssim \| |W_\cA(f,g_1)|\ast  |[W_{\cA}(g_1,g_2)]^*|(E_\cA\cdot)\|_{L^p_{v_{s}}}\\
&\asymp  \| |W_\cA(f,g_1)|\ast  |[W_{\cA}(g_1,g_2)]^*\|_{L^p_{v_{s}}}
\end{align*}
since $v_{s}(y)\asymp v_{s}(E_\cA^{-1}y)$.  Now, Young's convolution inequalities for $1\leq p\leq \infty$ gives
$$  \| |W_\cA(f,g_1)|\ast  |[W_{\cA}(g_1,g_2)]^*\|_{L^p_{v_{s}}}\leq  \|W_\cA(f,g_1)\|_{L^p_{v_{s}}}\|W_{\cA}(g_1,g_2)\|_{L^1_{v_{s}}}<\infty,$$
since $W_{\cA}(g_1,g_2)\in\cS(\rdd)$ for $g_1,g_2\in\cS(\rd)$ by Proposition \ref{def bilA triple}.
This proves the implication in $(iib)$. Moreover, item $(i)$ and the previous estimate yield the equivalence in $(iia)$. It remains to show item $(iic)$. 
For $0<p<1$,  consider $\gamma\in \cS(\rd)$ such that  $G(\gamma; \Lambda)$ is a Gabor frame for $L^2(\rd)$, then, arguing as above with $\gamma$ in place of $g_3$:
\begin{align*}\|f\|_{M^p_{v_{s}}}&\asymp \|V_{\gamma} f \|_{L^p_{v_{s}}}\asymp \|V_{\gamma} f \|_{\ell^p_{v_{s}}}\lesssim \| |W_\cA(f,\gamma)|\ast  |[W_{\cA}(\gamma,g_2)]^*|(E\cdot)\|_{\ell^p_{v_{s}}}\\
&\asymp  \| |W_\cA(f,\gamma)|\ast  |[W_{\cA}(\gamma,g_2)]^*\|_{\ell^p_{v_{s}}}\leq \|W_\cA(f,\gamma)\|_{\ell^p_{v_{s}}}\|[W_{\cA}(\gamma,g_2)]^*\|_{\ell^p_{v_{s}}}<\infty,
\end{align*}
 by the convolution property for sequences: $$\ell^p_{v_{s}}\ast \ell^p_{v_{s}}\hookrightarrow \ell^p_{v_{s}},\quad s\geq0,\,\,0<p\leq1;$$
 and $W_{\cA}(\gamma,g_2)\in\cS(\rdd)$ for $\gamma,g_2\in\cS(\rd)$ (cf., Proposition \ref{def bilA triple}). In fact,  the restriction $W_{\cA}(\gamma,g_2)(\lambda)$, $\lambda\in\Lambda$ is in $\ell^p_{v_{s}}(\Lambda)$, for every $0<p\leq\infty$, $s\geq0$. This concludes the proof.
\end{proof}

\begin{corollary}\label{mainEcor}
	For $s\geq0$, $1\leq p\leq2$,  $\cA\in Sp(2d,\bR)$ such that $W_\cA$ is shift-invertible. Then
	\begin{equation*}
	f\in M^p_{v_{s}}(\rd)\,\,\Leftrightarrow \,\,W_{\cA}f\in L^p_{v_{s}}(\rdd).
	\end{equation*}	
\end{corollary}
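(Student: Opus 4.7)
The plan is to specialize the reasoning of Theorem \ref{mainE}(iia) to the diagonal case $g = f$, combining Theorem \ref{Teorema1} for the forward implication with an adaptation of the shift-invertibility chain of Theorem \ref{mainE}(iib) for the reverse.

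For the direction $f \in M^p_{v_s}(\rd) \Rightarrow W_\cA f \in L^p_{v_s}(\rdd)$, I would apply Theorem \ref{Teorema1} with $g = f$ to obtain $\|W_\cA f\|_{M^p_{v_s}} \lesssim \|f\|_{M^p}\|f\|_{M^p_{v_s}} \lesssim \|f\|^2_{M^p_{v_s}}$, and then chain the embeddings $M^p_{v_s}(\rdd) \hookrightarrow M^p_{v_s \otimes 1}(\rdd) \hookrightarrow L^p_{v_s}(\rdd)$ recalled in the proof of Theorem \ref{mainE}(i), which are valid for $1 \leq p \leq 2$, $s \geq 0$.

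For the reverse direction, the plan is to mimic the proof of Theorem \ref{mainE}(iib) using Proposition \ref{Prop3} with $g_1 = f$ in place of a fixed Schwartz window. Assuming $f \neq 0$ (the zero case being trivial), I would pick $g_2, g_3 \in \cS(\rd)$ with $\la g_2, f \ra \neq 0$, which is possible by density of $\cS(\rd)$ in $L^2(\rd)$. Proposition \ref{Prop3} then yields
$$
V_{g_3} f(w) = \frac{1}{\la g_2, f \ra}\, \la W_\cA(f, f),\, W_\cA(\pi(w) g_3, g_2) \ra_{L^2(\rdd)},
$$
and the shift-invertibility hypothesis gives $|W_\cA(\pi(w) g_3, g_2)| = |T_{E_\cA w} W_\cA(g_3, g_2)|$. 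Hence $|V_{g_3} f(w)|$ is pointwise dominated by $(|W_\cA f| \ast |W_\cA(g_3, g_2)|^\ast)(E_\cA w)$. Taking $L^p_{v_s}$-norms, exploiting the change of variables $y \mapsto E_\cA^{-1} y$ (which preserves $L^p_{v_s}$-norms up to constants, since $v_s \circ E_\cA^{-1} \asymp v_s$) together with Young's convolution inequality for $1 \leq p \leq \infty$, I would conclude
$$
\|f\|_{M^p_{v_s}} \asymp \|V_{g_3} f\|_{L^p_{v_s}} \lesssim \|W_\cA f\|_{L^p_{v_s}}\, \|W_\cA(g_3, g_2)\|_{L^1_{v_s}} < \infty,
$$
the last factor being finite because $W_\cA(g_3, g_2) \in \cS(\rdd)$ by Proposition \ref{def bilA triple}.

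The main obstacle is to ensure that $f$ is regular enough for Proposition \ref{Prop3} to apply, namely $f \in L^2(\rd)$. For $p = 2$ this is immediate from Moyal's identity $\|W_\cA f\|^2_{L^2} = \|f\|^4_{L^2}$ (Proposition \ref{Moyal}). For $1 \leq p < 2$ one has to interpret $f$ as a tempered distribution and use the inversion $f \otimes f = \mu(\cA)^{-1} W_\cA f \in \cS'(\rdd)$, then justify the chain of estimates through approximation by Schwartz functions (or by a duality reformulation of Proposition \ref{Prop3}); this is the only delicate step, the rest of the argument being a direct specialization of Theorem \ref{mainE}.
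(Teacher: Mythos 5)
Your forward implication coincides with the paper's: Theorem \ref{Teorema1} with $g=f$, followed by the embeddings $M^p_{v_s}(\rdd)\hookrightarrow M^p_{v_s\otimes 1}(\rdd)\hookrightarrow L^p_{v_s}(\rdd)$, exactly as in Theorem \ref{mainE}(i). For the converse your route diverges from the paper's: you substitute $g_1=f$ directly into Proposition \ref{Prop3}, which is legitimate only once you know $f\in L^2(\rd)$, since that proposition is stated, and its proof (via the inversion formula and the absolute convergence of the integrals) is carried out, for $f,g_1,g_2,g_3\in L^2(\rd)$. For $p=2$ your argument closes cleanly and is arguably tidier than the paper's: $W_\cA f\in L^2_{v_s}(\rdd)\subseteq L^2(\rdd)$ and the unitarity of $\mu(\cA)$ give $f\otimes\bar f\in L^2(\rdd)$, hence $f\in L^2(\rd)$, after which the shift-invertibility and Young chain runs as in Theorem \ref{mainE}(ii).

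The gap is the range $1\leq p<2$, which you flag but do not close. There $W_\cA f\in L^p_{v_s}(\rdd)$ does not imply $W_\cA f\in L^2(\rdd)$, so you cannot conclude $f\in L^2(\rd)$, and Proposition \ref{Prop3} with $g_1=f$ is simply not available; the ``approximation by Schwartz functions or duality reformulation'' you defer to is precisely the substance of the proof, not a routine verification. The paper avoids the substitution $g_1=f$ altogether: it keeps $g_1,g_2,g_3\in\cS(\rd)$ in Proposition \ref{Prop3}, obtaining
$\|f\|_{M^p_{v_s}}\lesssim\|W_\cA(f,g_1)\|_{L^p_{v_s}}\|W_\cA(g_3,g_2)\|_{L^1_{v_s}}$
for a Schwartz window $g_1$, then chooses $g_n\in\cS(\rd)$ with $g_n\to f$ in $M^p_{v_s}(\rd)$ and proves
$\|W_\cA(f,g_n)\|_{L^p_{v_s}}\to\|W_\cA(f,f)\|_{L^p_{v_s}}$
through the continuity estimate
$\|W_\cA(f,f-g_n)\|_{L^p_{v_s}}\leq\|W_\cA(f,f-g_n)\|_{M^p_{v_s}}\lesssim\|f\|_{M^p_{v_s}}\|f-g_n\|_{M^p_{v_s}}$,
which rests on Theorem \ref{Teorema1} together with the same embeddings used in the forward direction; the conclusion follows by letting $n\to\infty$ in the displayed inequality. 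To complete your write-up you should either reproduce this limiting argument or genuinely extend Proposition \ref{Prop3} beyond $L^2$ entries; as it stands your reverse implication is established only for $p=2$.
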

\begin{proof}
	$f\in M^p_{v_{s}}(\rd)\,\,\Rightarrow \,\,W_{\cA}f\in L^p_{v_{s}}(\rdd)$  is a straightforward generalization of the proof of Theorem \ref{mainE}$(i)$, with $g=f\in M^p_{v_{s}}(\rd)$. Vice versa, following the proof  pattern of  Theorem \ref{mainE} $(ii)$ with Proposition \ref{Prop3} applied for $g_1,g_2,g_3\in\cS(\rd)$ we can write
\begin{equation}\label{App3}
	\|f\|_{M^p_{v_{s}}}\lesssim \|W_{\cA}(f,g_1)\|_{L^p_{v_s}}\|W_\cA(g_3,g_2)\|_{L^1_{v_s}}.
\end{equation}
	Now, for $f\in  M^p_{v_{s}}(\rd)$ there exists a sequence $(g_n)_n \subset \cS(\rd)$ such that $g_n\to f$ in $ M^p_{v_{s}}(\rd)$. Now, using \cite[Proposition 2.9]{Toftweight2004} in the first inequality below  and \cite[Proposition 2.4.17]{Elena-book} in the second one, for $1\leq p\leq 2$, 
	\begin{align*}
	\|W_{\cA}(f,f)-W_{\cA}(f,g_n)\|_{L^p_{v_s}} &=\|W_{\cA}(f,f-g_n)\|_{L^p_{v_s}} \leq \|W_{\cA}(f,f-g_n)\|_{M^p_{v_{s}\otimes 1}}\\&\leq \|W_{\cA}(f,f-g_n)\|_{M^p_{v_s}} =\|W_{\cA}(f,f-g_n)\|_{M^p_{v_s}}\\ &\lesssim \|f\|_{M^{p}_{v_{s}}}\|g_n-g\|_{M^{p}_{v_{s}}},
	\end{align*}
	where the last inequality is due to Theorem \ref{Teorema1}. Since $\|g_n-g\|_{M^{p}_{v_{s}}}\to 0$ as $n\to\infty$, we obtain that $\|W_{\cA}(f,g_n)\|_{L^p_{v_s}} \to \|W_{\cA}(f,f)\|_{L^p_{v_s}}$ as $n\to\infty$ and the thesis follows by replacing $g_1$ by $g_n$ in \eqref{App3} and letting $n\to\infty$.	
\end{proof}

For  $\tau$-Wigner distributions we have a characterization for every $0<p\leq\infty$, as explained below. Notice that we extend Theorem 3.11 of \cite{CR2021} to $0<p\leq\infty$ for the weight $v=v_{s}$, $s\geq0$.
\begin{proposition}\label{caso-tau}
		Consider $0< p,q\leq \infty$,  $\tau\in \bR\setminus\{(0,1)\}$. Then, for any $g\in\cS(\rd)$,
		\begin{equation}\label{Mpvgammanew}
		f\in M^{p,q}_{v_{s}}(\rd)\Leftrightarrow W_\tau (f,g) \in L^{p,q}_{v_{s}}(\rdd).
		\end{equation}  
		For $1\leq p,q\leq\infty$ the window $g$ can be chosen in the larger class $M^1_{v_{s}}(\rd)$.
	\end{proposition}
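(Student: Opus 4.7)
The strategy is to reduce the equivalence to the defining window property of $M^{p,q}_{v_s}$ via an explicit pointwise identity that expresses $W_\tau(f,g)$ as a rescaled STFT with a dilated window. In the integral defining $W_\tau(f,g)(x,\xi)$ I would perform the substitution $t=(y-x)/\tau$ (legitimate since $\tau\neq 0$) and use the factorisation
\[
x-(1-\tau)t=-\tfrac{1-\tau}{\tau}\bigl(y-\tfrac{x}{1-\tau}\bigr)
\]
(legitimate since $\tau\neq 1$). A short calculation then yields
\[
W_\tau(f,g)(x,\xi)=\frac{e^{2\pi i x\xi/\tau}}{|\tau|^d}\,V_{\gamma_\tau}f\!\left(\tfrac{x}{1-\tau},\,\tfrac{\xi}{\tau}\right),\qquad \gamma_\tau(z):=g\!\left(-\tfrac{1-\tau}{\tau}z\right),
\]
so that $|W_\tau(f,g)(x,\xi)|=|\tau|^{-d}|V_{\gamma_\tau}f(x/(1-\tau),\xi/\tau)|$. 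The crucial feature is that the phase-space change of variables $(x,\xi)\mapsto(x/(1-\tau),\xi/\tau)$ is \emph{separable}: it acts diagonally on the time and frequency coordinates.

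Next I would substitute $x'=x/(1-\tau)$, $\xi'=\xi/\tau$ in the mixed $L^{p,q}_{v_s}$-norm. Because the substitution is diagonal, it leaves the nested mixed-norm structure intact and only produces the harmless Jacobian constants $|1-\tau|^{d/p}|\tau|^{d/q}$ (with the usual quasi-norm adjustment when $p$ or $q<1$). Combined with the weight equivalence $v_s((1-\tau)x',\tau\xi')\asymp v_s(x',\xi')$ valid for fixed $\tau\in\bR\setminus\{0,1\}$, this gives
\[
\|W_\tau(f,g)\|_{L^{p,q}_{v_s}(\rdd)}\asymp \|V_{\gamma_\tau}f\|_{L^{p,q}_{v_s}(\rdd)}\asymp \|f\|_{M^{p,q}_{v_s}(\rd)},
\]
the last equivalence being the definition of $M^{p,q}_{v_s}$ and its independence of the nonzero admissible window.

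The remaining step is to verify admissibility of $\gamma_\tau$. When $g\in\cS(\rd)$ the function $\gamma_\tau$ is a nonzero dilate of $g$ and hence still in $\cS(\rd)$, which suffices in the full quasi-Banach range $0<p,q\leq\infty$. When $1\leq p,q\leq\infty$ and $g\in M^1_{v_s}(\rd)$, the map $g\mapsto \gamma_\tau$ is (up to a factor) a rescaling operator $\mu(\cD_L)$ in the sense of \eqref{MotL}--\eqref{AdL}, with $L$ the scalar matrix $-\frac{1-\tau}{\tau}I$; this operator preserves $M^1_{v_s}(\rd)$ by Theorem~\ref{Teorema0} (equivalently Corollary~\ref{Cor3.14}), so $\gamma_\tau$ lies in the broader window class, as claimed.

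There is no serious obstacle: the argument is a bookkeeping exercise once one notices that the reduction to STFT involves only a \emph{separable} phase-space dilation, which is compatible with the mixed $L^{p,q}$ structure and with the weight $v_s$. The two conditions $\tau\neq 0$ and $\tau\neq 1$ are precisely what makes the two substitutions in the first step well defined, which transparently explains the anomaly of the Rihaczek endpoints.
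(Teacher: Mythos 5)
Your proposal is correct and follows essentially the same route as the paper: the paper invokes the identity $V_{Q_\tau g}f(x,\xi)=\tau^d e^{-2\pi i(1-\tau)x\xi}\,W_\tau(f,g)((1-\tau)x,\tau\xi)$ with $Q_\tau g(t)=g\bigl(-\tfrac{1-\tau}{\tau}t\bigr)$ (citing Corollary 3.3 of \cite{CR2021}), which is exactly your identity read in the inverse direction, and then concludes via the same diagonal change of variables and the equivalence $v_s((1-\tau)\cdot,\tau\cdot)\asymp_\tau v_s$. Your direct derivation of the identity by substitution and your explicit check that the dilated window stays in $M^1_{v_s}(\rd)$ are fine and merely make explicit what the paper leaves to the cited reference.
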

\begin{proof}
	For $p=q$ and $1\leq p\leq\infty$ the result was proved in Theorem 3.11 of \cite{CR2021}. Let us prove the general case. By Corollary 3.3. of \cite{CR2021}, with $Q_\tau g$ in place of $g$, we can write
	\begin{equation*}
	V_{Q_{\tau}g} f (x,\xi)=\tau^d e^{-2\pi i(1- \tau)x \xi}	W_\tau (f,g)\left(\mathcal{B}^{-1}_\tau\phas\right),
	\end{equation*}
	where
	\begin{equation*}
	Q_\tau g(t)=\mathcal{I}g\left(\frac{1-\tau}{\tau}t\right),\quad t\in\rd,
	\end{equation*}
	$\mathcal{I}g(t):=g(-t)$, and 
		\begin{equation*}
\mathcal{B}_\tau^{-1}=\begin{pmatrix}(1-\tau) I_d&0_d\\0_d&\tau I_d\end{pmatrix}.
	\end{equation*}
	The result is then a simple computation:
	\begin{align*}
	\|f\|_{M^{p,q}_{v_{s}}}&\asymp \|V_{Q_{\tau}g} f\|_{L^{p,q}_{v_{s}}}=\|W_\tau (f,g)(\mathcal{B}^{-1}_\tau\cdot)\|_{L^{p,q}_{v_{s}}}\asymp\|W_\tau (f,g)((1-\tau)\cdot,\tau\cdot)\|_{L^{p,q}_{v_{s}}}\\
	&\asymp  \|W_\tau (f,g)\|_{L^{p,q}_{v_{s}}},
	\end{align*}
	since $v_{s}((1-\tau)\cdot,\tau\cdot))\asymp_\tau v_{s}$, for $\tau=\bR\setminus\{0,1\}$.
\end{proof}

\subsection{STFT and $\cA$-Wigner representations}

The case of $\tau$-Wigner distributions suggests a deeper study of covariant matrices $\cA$ such that $$\mu(\cA)=\cF_2 \mathfrak{T}_{L}$$ as in \eqref{metapf2dil},
where $\cF_2$ is the partial Fourier transform with respect to the second variables $y$ defined in \eqref{FT2} and the change of coordinates $\mathfrak{T}_{L}$ is defined in \eqref{Ltransf}. As observed in \cite{CR2021},  see also \cite{MO2002},
\begin{equation}\label{MOf2}
\mu(\cA_{FT2})=\cF_2,
\end{equation}
where \begin{equation}\label{A-f2}
\mathcal{A}_{FT2}=\left(\begin{array}{cc}
A_{11}^{FT2} & A_{12}^{FT2}\\
A_{21}^{FT2} & A_{22}^{FT2}
\end{array}\right)\in Sp(2d,\bR),
\end{equation}
and  $A_{11}^{FT2},A_{12}^{FT2},A_{21}^{FT2},A_{22}^{FT2}$ are the $2d\times 2d$ matrices: 
\begin{equation}\label{A-f2bis}
{A}_{11}^{FT2}=A_{22}^{FT2}=\left(\begin{array}{cc}
I_{d\times d} &0_{d\times d}\\
0_{d\times d} &0_{d\times d}
\end{array}\right), \quad {A}_{12}^{FT2}=\left(\begin{array}{cc}
0_{d\times d} &0_{d\times d}\\
0_{d\times d} &I_{d\times d}
\end{array}\right), \quad {A}_{21}^{FT2}=-{A}_{12}^{FT2}.
\end{equation}

\begin{proposition}\label{E7} A covariant matrix $\cA\in Sp(2d,\bR)$ satisfies \eqref{metapf2dil} if and only if
	\begin{equation}\label{A-covariant-L}
	\cA=\left(\begin{array}{cccc}
	A_{11} & I_{d\times d}-A_{11}&0_{d\times d}&0_{d\times d}\\
0_{d\times d}&0_{d\times d}&I_{d\times d}- A^T_{11} &-A^T_{11}\\
	0_{d\times d}&0_{d\times d}&I_{d\times d} &I_{d\times d}\\
	-I_{d\times d}&I_{d\times d}& 0_{d\times d} &0_{d\times d}\\
	\end{array}\right)
	\end{equation}
(observe that $A_{13}= A_{21}=0_{d\times d}$)	and the matrix $L$ in \eqref{Ltransf} is given by 
	\begin{equation}\label{L-cov}
	L=\left(\begin{array}{cc}
	I_{d\times d} & I_{d\times d}-A_{11}\\
	I_{d\times d}&- A_{11}\\
	\end{array}\right).
	\end{equation}
\end{proposition}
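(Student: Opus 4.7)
The plan is to reduce the identity $\mu(\cA)=\cF_{2}\mathfrak{T}_{L}$ to a symplectic matrix equation and then match blocks. Since, up to a phase factor, $\mu$ is a homomorphism, and since $\mu(\cA_{FT2})=\cF_{2}$ by \eqref{MOf2} while $\mu(\cD_{L})=\mathfrak{T}_{L}$ by \eqref{AdL} (applied in dimension $2d$, so $\cD_{L}\in Sp(2d,\bR)$ has $2d\times 2d$ blocks), the condition \eqref{metapf2dil} is equivalent to the purely symplectic relation
\[
\cA=\cA_{FT2}\,\cD_{L},\qquad L\in GL(2d,\bR).
\]
Thus the problem becomes: for $\cA$ covariant, i.e., of the form \eqref{A-covariant}, when can one solve this equation, and in that case what is $L$?

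First I would write $\cA_{FT2}$ in the $4\times 4$ block form with $d\times d$ entries coming from \eqref{A-f2}--\eqref{A-f2bis}, namely
\[
\cA_{FT2}=\begin{pmatrix}I & 0 & 0 & 0\\ 0 & 0 & 0 & I\\ 0 & 0 & I & 0\\ 0 & -I & 0 & 0\end{pmatrix},
\]
and split $L=\bigl(\begin{smallmatrix}L_{1}&L_{2}\\ L_{3}&L_{4}\end{smallmatrix}\bigr)$, $L^{-1}=\bigl(\begin{smallmatrix}M_{1}&M_{2}\\ M_{3}&M_{4}\end{smallmatrix}\bigr)$ into $d\times d$ sub-blocks. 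A direct block-multiplication then gives
\[
\cA_{FT2}\,\cD_{L}=\begin{pmatrix} M_{1} & M_{2} & 0 & 0\\ 0 & 0 & L_{2}^{T} & L_{4}^{T}\\ 0 & 0 & L_{1}^{T} & L_{3}^{T}\\ -M_{3} & -M_{4} & 0 & 0\end{pmatrix}.
\]
Comparing this, row by row, with the covariant form \eqref{A-covariant} produces a small system whose solution is forced.

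The third row yields $L_{1}^{T}=L_{3}^{T}=I$, i.e.\ $L_{1}=L_{3}=I$, and the fourth row yields $M_{3}=I$, $M_{4}=-I$. The upper-left block gives $M_{1}=A_{11}$ and $M_{2}=I-A_{11}$, while the upper-right block forces $A_{13}=0$, and the lower-left block forces $A_{21}=0$; this immediately yields the reduced covariant shape \eqref{A-covariant-L}. Finally, the second row identifies $L_{2}=I-A_{11}$ and $L_{4}=-A_{11}$, which assembles to exactly \eqref{L-cov}. A short sanity check that $L\cdot L^{-1}=I$ with these sub-blocks, namely
\[
\begin{pmatrix}I & I-A_{11}\\ I & -A_{11}\end{pmatrix}\begin{pmatrix}A_{11} & I-A_{11}\\ I & -I\end{pmatrix}=\begin{pmatrix}I & 0\\ 0 & I\end{pmatrix},
\]
confirms invertibility of $L$ for every choice of the parameter $A_{11}$, so this prescription of $L$ is admissible.

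This argument is a biconditional: on one hand, if $\cA$ is covariant and \eqref{metapf2dil} holds, the matrix equation $\cA=\cA_{FT2}\cD_{L}$ forces $\cA$ into the form \eqref{A-covariant-L} with $L$ given by \eqref{L-cov}; on the other, starting from \eqref{A-covariant-L} and \eqref{L-cov} the block calculation above runs backwards and yields $\cA_{FT2}\cD_{L}=\cA$, hence $\mu(\cA)=\cF_{2}\mathfrak{T}_{L}$ up to the usual phase factor. The only real obstacle is bookkeeping the $4d\times 4d$ block product and keeping straight which sub-blocks belong to $L$ and which to $L^{-1}$; there is no analytic difficulty, and no hidden symplectic constraint beyond those already encoded in \eqref{A-covariant}, since $A_{13}=0$ and $A_{21}=0$ are symmetric trivially, so the resulting $\cA$ stays inside the covariant subclass of $Sp(2d,\bR)$.
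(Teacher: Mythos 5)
Your argument is correct and follows exactly the paper's route: reduce \eqref{metapf2dil} to the symplectic identity $\cA=\cA_{FT2}\,\cD_L$ (up to phase) and compare the $4\times 4$ block decomposition against the covariant form \eqref{A-covariant}; the paper merely labels this a ``straightforward computation'' and records $L^{-1}=\bigl(\begin{smallmatrix}A_{11}&I-A_{11}\\ I&-I\end{smallmatrix}\bigr)$, which matches the sub-blocks $M_1,\dots,M_4$ you extract. Your explicit block bookkeeping and the check $LL^{-1}=I$ simply fill in the details the paper leaves out.
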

\begin{proof}
Up to a phase factor we can write
$$\cA=\mathcal{A}_{FT2}\cD_L,$$
where $\cD_L$ is defined in \eqref{MotL}. The claim is then a straightforward computation, using that
	\begin{equation*}
L^{-1 }=\left(\begin{array}{cc}
 A_{11} & I_{d\times d}-A_{11}\\
I_{d\times d}&-I_{d\times d}\\
\end{array}\right).
\end{equation*}
\end{proof}
\begin{remark}
	(i) The matrix $L$ in \eqref{L-cov} is invertible for every $d\times d$ real matrix $A_{11}$. (We stress that $A_{11}$ is not required to be invertible). In fact, we have
	$$\det L =\det(-I_{d\times d})=(-1)^d.$$
	(ii) Under the assumptions of Proposition \ref{E7} the matrix $E_\cA$ becomes 
		\begin{equation}\label{matrixE-L}
	E_\cA=\left(\begin{array}{cc}
	A_{11} & 0_{d\times d}\\
		0_{d\times d} &  I_{d\times d}-A_{11}^T
	\end{array}\right)
	\end{equation}
	so that $E_\cA$ is invertible if and only if $A_{11}$ and $I_{d\times d}-A_{11}^T$ (or, equivalently, $I_{d\times d}-A_{11}$) are invertible matrices. In other words, $\cA$ is shift-invertible if and only if $A_{11}$ and $I_{d\times d}-A_{11}$ are invertible matrices.\par
	(iii) For $\tau$-Wigner distributions the matrix $L=L_{\tau}$ is easily computed to be
	\begin{equation}\label{matrixE-L-tau}
	L_{\tau}=\left(\begin{array}{cc}
	I_{d\times d} & \tau I_{d\times d}\\
	I_{d\times d} &  -(1-\tau)I_{d\times d}
	\end{array}\right).
	\end{equation}
	\end{remark}

We are interested to determine the conditions under which a covariant $\cA$-Wigner $W_\cA=c_\cA\cF_2 \mathcal{D}_L$  with $|c_\cA|=1$, can be related to the STFT. We recall that the matrix $L$ takes the form in \eqref{L-cov} so that 
\begin{equation*}
(L^{-1})^T=\left(\begin{array}{cc}
A_{11}^T & I_{d\times d}\\
 I_{d\times d}-A_{11}^T & -I_{d\times d}
\end{array}\right).
\end{equation*}

\begin{theorem}
	\label{right-reg rep} Let $\cA\in Sp(2d,\bR)$ be a covariant matrix satisfying \eqref{metapf2dil}  and shift-invertible. For every $f,g\in L^{2}\left(\mathbb{R}^{d}\right)$, the following formula holds:
	\begin{equation}\label{a-w-STFT}
	W_{\cA}\left(f,g\right)\left(x,\o\right)=\left|\det (I_{d\times d}-A_{11})\right|^{-1}e^{2\pi i (I-A_{11}^T)^{-1}\o\cdot x}V_{\tilde{g}}f(A_{11}^{-1}x,(I-A_{11}^T)^{-1}\o),\quad x,\o\in\rd,
	\end{equation}
	where 
	\begin{equation}\label{tildeg}
	\tilde{g}\left(t\right)=g\left(-A_{11}(I_{d\times d}-A_{11})^{-1}t\right).
	\end{equation}
\end{theorem}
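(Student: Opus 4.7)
The plan is to unfold the definition $W_\cA(f,g)=\mu(\cA)(f\otimes\bar g)=\cF_2 \mathfrak{T}_L(f\otimes\bar g)$ using the explicit form of $L$ given by \eqref{L-cov}, and then reduce the resulting oscillatory integral to an STFT by a single change of variables. Since $|\det L|=|\det(-I_{d\times d})|=1$ (cf.\ the remark following Proposition \ref{E7}), the rescaling $\mathfrak{T}_L$ contributes no Jacobian constant, so I expect
\[
\mathfrak{T}_L(f\otimes\bar g)(x,y)= f\bigl(x+(I_{d\times d}-A_{11})y\bigr)\,\overline{g(x-A_{11}y)},
\]
and applying the partial Fourier transform $\cF_2$ with respect to $y$ gives
\[
W_\cA(f,g)(x,\omega)=\int_{\rd} e^{-2\pi i y\cdot\omega}\,f\bigl(x+(I-A_{11})y\bigr)\,\overline{g(x-A_{11}y)}\,dy.
\]
This step is routine; the phase factor in the definition of $\mu(\cA)$ is absorbed into the equality up to a unimodular constant, consistently with the convention adopted in the paper.

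Next I would perform the substitution $t=x+(I-A_{11})y$, which is a genuine change of variables thanks to the shift-invertibility assumption (which, by \eqref{matrixE-L}, forces $I-A_{11}$ to be invertible). Then $y=(I-A_{11})^{-1}(t-x)$, $dy=|\det(I-A_{11})|^{-1}dt$, and the simple algebraic identity $(I-A_{11})^{-1}=I+A_{11}(I-A_{11})^{-1}$ yields
\[
x-A_{11}y=(I-A_{11})^{-1}x-A_{11}(I-A_{11})^{-1}t.
\]
Also $y\cdot\omega=(t-x)\cdot(I-A_{11}^T)^{-1}\omega$. Pulling out the $x$-dependent phase produces
\[
W_\cA(f,g)(x,\omega)=|\det(I-A_{11})|^{-1}e^{2\pi i(I-A_{11}^T)^{-1}\omega\cdot x}\!\!\int e^{-2\pi i t\cdot(I-A_{11}^T)^{-1}\omega} f(t)\,\overline{g\bigl((I-A_{11})^{-1}x-A_{11}(I-A_{11})^{-1}t\bigr)}\,dt.
\]

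Finally I would match the integral against the definition \eqref{STFTdef} of the STFT. Writing the argument of $g$ as $-A_{11}(I-A_{11})^{-1}\bigl(t-A_{11}^{-1}x\bigr)$, using that $A_{11}$ is invertible by shift-invertibility, we recognise precisely $\tilde g(t-u)$ for $u=A_{11}^{-1}x$ with $\tilde g$ as in \eqref{tildeg}. Setting $\eta=(I-A_{11}^T)^{-1}\omega$, the integral is $V_{\tilde g}f(A_{11}^{-1}x,\eta)$, and \eqref{a-w-STFT} follows. The main (modest) obstacle is bookkeeping: one must verify that the linear combinations appearing inside $g$ collapse exactly into $\tilde g(t-u)$ with the \emph{correct} $u$ and $\eta$, which hinges on the identity $(I-A_{11})^{-1}x = A_{11}(I-A_{11})^{-1}\cdot A_{11}^{-1}x$, and must be checked without assuming $A_{11}$ and $I-A_{11}$ commute (they do not in general, although $A_{11}(I-A_{11})^{-1}=(I-A_{11})^{-1}A_{11}$ does hold and is the underlying reason the formula works).
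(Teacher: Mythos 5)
Your argument is correct. Note, though, that the paper itself does not carry out this computation: its proof of Theorem \ref{right-reg rep} consists of observing that shift-invertibility forces $A_{11}$ and $I_{d\times d}-A_{11}$ to be invertible and then invoking \cite[Theorem 3.8]{CT2020}. What you have written is essentially a self-contained derivation of that cited result in the present normalization. All the delicate points check out: $|\det L|=|\det(-I_{d\times d})|=1$ so $\mathfrak{T}_L$ indeed contributes no constant; the substitution $t=x+(I-A_{11})y$ is legitimate precisely because shift-invertibility (via \eqref{matrixE-L}) makes $I-A_{11}$ invertible and produces the Jacobian $|\det(I-A_{11})|^{-1}$; and the identification of the remaining integral with $V_{\tilde g}f(A_{11}^{-1}x,(I-A_{11}^T)^{-1}\o)$ rests exactly on the commutation $A_{11}(I-A_{11})^{-1}=(I-A_{11})^{-1}A_{11}$, which you correctly single out and which holds because $A_{11}$ commutes with $I-A_{11}$. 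The only caveat worth recording is that the starting identity $\mu(\cA)=\cF_2\mathfrak{T}_L$ is itself the normalization fixed by hypothesis \eqref{metapf2dil}, so the formula \eqref{a-w-STFT} is exact for that choice of phase and only up to a unimodular factor for any other admissible choice of $\mu(\cA)$; you state this convention explicitly, so there is no gap. Your version has the advantage of making the constants and the precise form of $\tilde g$ in \eqref{tildeg} verifiable without consulting \cite{CT2020}, at the cost of a page of bookkeeping the paper avoids.
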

\begin{proof}
	Since $\cA$ is shif-invertible the matrices  $A_{11}$ and $I_{d\times d}-A_{11}$ are invertible. Then the result follows from  Theorem 3.8 of \cite{CT2020}.
\end{proof}
\begin{theorem}\label{caso-A-Wigner}
	Consider $0< p,q\leq \infty$,  $\cA\in Sp(2d,\bR)$ as in Theorem \ref{right-reg rep}. Then, for any $g\in\cS(\rd)$,
	\begin{equation}\label{Mpvgammanew2}
	f\in M^{p,q}_{v_{s}}(\rd)\Leftrightarrow W_\cA (f,g) \in L^{p,q}_{v_{s}}(\rdd).
	\end{equation}  
	 with equivalence of norms $\|f\|_{M^{p,q}_{v_{s}}}\asymp \|W_\cA (f,g) \|_{L^{p,q}_{v_{s}}}$.
	For $1\leq p,q\leq\infty$ the window $g$ can be chosen in the larger class $M^1_{v_{s}}(\rd)$.
\end{theorem}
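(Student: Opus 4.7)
The plan is to reduce Theorem \ref{caso-A-Wigner} to the standard STFT characterization of modulation spaces via the explicit reformulation already provided by Theorem \ref{right-reg rep}. The shift-invertibility of $\cA$, combined with the covariant structure in \eqref{A-covariant-L}, forces both $A_{11}$ and $I_{d\times d}-A_{11}$ to be invertible, so Theorem \ref{right-reg rep} applies and yields
\begin{equation*}
|W_\cA(f,g)(x,\omega)|=|\det(I_{d\times d}-A_{11})|^{-1}\,\bigl|V_{\tilde g}f(A_{11}^{-1}x,(I_{d\times d}-A_{11}^T)^{-1}\omega)\bigr|,
\end{equation*}
with $\tilde g(t)=g(-A_{11}(I_{d\times d}-A_{11})^{-1}t)$. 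Thus $|W_\cA(f,g)|$ is, up to an invertible linear change of variables in phase space, the modulus of a standard STFT with window $\tilde g$.

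The second step is to upgrade this pointwise identity to the norm equivalence \eqref{Mpvgammanew2}. I would perform the change of variables $u=A_{11}^{-1}x$, $\eta=(I_{d\times d}-A_{11}^T)^{-1}\omega$ inside the mixed-norm integral defining $\|W_\cA(f,g)\|_{L^{p,q}_{v_s}}$. The two Jacobians contribute harmless multiplicative constants, and since the linear bijection $(u,\eta)\mapsto(A_{11}u,(I_{d\times d}-A_{11}^T)\eta)$ is invertible on $\rdd$, the weight satisfies $v_s(A_{11}u,(I_{d\times d}-A_{11}^T)\eta)\asymp v_s(u,\eta)$ uniformly. This produces the comparison $\|W_\cA(f,g)\|_{L^{p,q}_{v_s}}\asymp\|V_{\tilde g}f\|_{L^{p,q}_{v_s}}$.

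The third step is to verify that $\tilde g$ is an admissible STFT window in each of the stated regimes for $(p,q)$. Since $M:=-A_{11}(I_{d\times d}-A_{11})^{-1}\in GL(d,\bR)$ by shift-invertibility, the map $g\mapsto\tilde g=g\circ M$ coincides, up to a scalar, with the rescaling metaplectic operator $\mu(\cD_{M^{-1}})$ from \eqref{AdL}. Hence $\tilde g\in\cS(\rd)$ whenever $g\in\cS(\rd)$, and Corollary \ref{Cor3.14} applied in dimension $d$ (with $\cA=\cD_{M^{-1}}$) ensures that this rescaling also preserves $M^1_{v_s}(\rd)$, so $\tilde g\in M^1_{v_s}(\rd)$ whenever $g\in M^1_{v_s}(\rd)$. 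Invoking the known STFT characterization $\|V_{\tilde g}f\|_{L^{p,q}_{v_s}}\asymp\|f\|_{M^{p,q}_{v_s}}$, valid for $0<p,q\leq\infty$ with Schwartz window and for $1\leq p,q\leq\infty$ with window in $M^1_{v_s}$, then closes the argument.

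I do not anticipate a substantial obstacle: the three delicate points (weight equivalence under the block-diagonal change of variables, invariance of the window class under the dilation by $M$, and applicability of the quasi-Banach STFT characterization when $0<p,q<1$) are all direct consequences of the invertibility of $A_{11}$ and $I_{d\times d}-A_{11}$, which is exactly what shift-invertibility encodes through Proposition \ref{E7}. The whole proof is therefore essentially a change of variables in the formula of Theorem \ref{right-reg rep}, followed by the well-known STFT description of weighted modulation spaces.
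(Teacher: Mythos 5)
Your proposal is correct and follows essentially the same route as the paper: apply Theorem \ref{right-reg rep} to rewrite $|W_\cA(f,g)|$ as a rescaled $|V_{\tilde g}f|$, absorb the block-diagonal change of variables using $v_s(A_{11}^{-1}z_1,(I_{d\times d}-A_{11}^T)^{-1}z_2)\asymp v_s(z_1,z_2)$, and note that $g\mapsto\tilde g$ preserves both $\cS(\rd)$ and $M^1_{v_s}(\rd)$. Your explicit justification of the window-class invariance via the rescaling metaplectic operator and Corollary \ref{Cor3.14} is slightly more detailed than the paper's, and you correctly track the dilation in the second variable as well, but the argument is the same.
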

\begin{proof}
	It is a straightforward consequence of Theorem \ref{right-reg rep}. In fact, for $g\in\cS(\rd)$ and under the assumptions $\det A_{11}\not=0$, $\det(I_{d\times d}-A_{11}) \not=0$, the rescaled function $\tilde{g}$ in \eqref{tildeg} is in $\cS(\rd)$ and by \eqref{a-w-STFT},
	\begin{equation*}
	\|f	\|_{M^{p,q}_{v_{s}}}\asymp \|V_{\tilde{g}}f\|_{ L^{p,q}_{v_{s}}}\asymp \|W_\cA (f,g)( A_{11}\cdot,\cdot)\|_{ L^{p,q}_{v_{s}}}
	\asymp \|	W_{\cA}\left(f,g\right)\|_{ L^{p,q}_{v_{s}}},
	\end{equation*}
	since $$v_{s}(A_{11}^{-1}z_1,z_2)=(1+|A_{11}^{-1}z_1|^2+|z_2|^2)^{s/2}\asymp (1+|z_1|^2+|z_2|^2)^{s/2},\quad s\in\bR.$$
	For $p,q\geq 1$ the windows can be chosen in the larger class $M^1_{v_{s}}(\rd)$ and we can argue as above by observing that $\tilde{g}$ in \eqref{tildeg} is in $M^1_{v_{s}}(\rd)$ whenever $g$ is.
\end{proof}

\section{$\cA$-Perturbations of the Wigner distribution}

This section studies the covariant $\cA$-Wigner representations as \emph{perturbations} of the Wigner distributions in \eqref{aggiunta6}:
	\begin{equation*}
W_\cA (f,g)=W(f,g)\ast\sigma_\cA\quad f,g\in\cS(\rd),
\end{equation*}
where the kernel $\sigma_\cA$ is defined in \eqref{aggiunta7}.
 We revisit in wider generality the  linear perturbations already performed in \cite{CT2020}. First, we recall the expression of the kernel $\sigma_\cA$ from Theorem \ref{Thaggiunta1}:
\begin{corollary}\label{CorThaggiunta1}
For a covariant matrix $\cA\in Sp(2d,\bR)$  satisfying \eqref{metapf2dil} the matrix $B_\cA$ in \eqref{aggiunta3} becomes
\begin{equation}\label{aggiunta3-FL}
B_\cA=\left(\begin{array}{cc}
0_{d\times d} & \frac12 I_{d\times d} -A_{11}\\
\frac12 I_{d\times d} -A_{11}^T&0_{d\times d}
\end{array}\right),
\end{equation}
so that the kernel $\sigma_\cA$ can be rephrased as
\begin{equation}\label{aggiunta7-FL}
\sigma_\cA(z)=\cF^{-1}(e^{-\pi i \zeta\cdot B_\cA\zeta})(z)=\cF^{-1}(e^{-\pi i \zeta_1\cdot \zeta_2}e^{-2\pi i \zeta_1\cdot A_{11}\zeta_2})(z).
\end{equation}
In particular, if $(1/2) I_{d\times d} -A_{11}$ is invertible, then by  \cite[Theorem 4.7]{CR2021}
\begin{align}\label{aggiunta8}
\sigma_\cA(z)&= e^{\pi i \sharp (B_\cA)}|\det B_\cA| e^{-\pi i z\cdot B_\cA^{-1}z}\\
&= e^{\pi i \sharp (B_\cA)}(\det ((1/2) I_{d\times d} -A_{11}))^2 e^{-\pi i z_1\cdot ( \frac12 I_{d\times d} -A_{11}^T)^{-1}z_2},\notag
\end{align}
where $ \sharp (B_\cA)$ is the number of positive eigenvalues of $B_\cA$ minus the number of negative eigenvalues and 
\begin{equation}\label{aggiunta3-FL-inv}
B_\cA^{-1}=\left(\begin{array}{cc}
0_{d\times d} &( \frac12 I_{d\times d} -A_{11}^T)^{-1}\\
(\frac12 I_{d\times d} -A_{11})^{-1}&0_{d\times d}
\end{array}\right).
\end{equation}
\end{corollary}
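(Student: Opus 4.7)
The statement decomposes into three assertions, each of which reduces to direct computation once the relevant results from the excerpt are brought to bear.

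First, I would derive \eqref{aggiunta3-FL} by substitution. Proposition \ref{E7} tells us that a covariant matrix $\cA$ satisfying \eqref{metapf2dil} has $A_{13}=0_{d\times d}$ and $A_{21}=0_{d\times d}$ in the block decomposition \eqref{A-covariant}. Inserting these two zero blocks into the general expression \eqref{aggiunta3} for $B_\cA$ kills the two diagonal blocks of $B_\cA$ and leaves exactly the off-diagonal matrix displayed in \eqref{aggiunta3-FL}. This step is purely bookkeeping.

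Next, for \eqref{aggiunta7-FL} I would compute the quadratic form $\zeta\cdot B_\cA\zeta$ explicitly using the off-diagonal block structure. Writing $\zeta=(\zeta_1,\zeta_2)\in\rdd$ and using symmetry, one finds
\[
\zeta\cdot B_\cA\zeta \;=\; 2\,\zeta_1\cdot\bigl(\tfrac12 I_{d\times d}-A_{11}\bigr)\zeta_2
\;=\; \zeta_1\cdot\zeta_2 \;-\; 2\,\zeta_1\cdot A_{11}\zeta_2,
\]
so that $e^{-\pi i\zeta\cdot B_\cA\zeta}$ factors as the product of the two chirp symbols appearing in \eqref{aggiunta7-FL}. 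Applying $\cF^{-1}$ and invoking \eqref{aggiunta7} from Theorem \ref{Thaggiunta1} gives the claim.

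Finally, for \eqref{aggiunta8} I would invoke \cite[Theorem 4.7]{CR2021}, which furnishes the inverse Fourier transform of a non-degenerate Gaussian chirp in terms of the signature $\sharp(B_\cA)$, the modulus of the determinant, and the inverse quadratic form. Three small verifications remain: (a) that $B_\cA$ is invertible exactly when $\tfrac12 I_{d\times d}-A_{11}$ is, (b) that for the off-diagonal block matrix $B_\cA=\bigl(\begin{smallmatrix}0 & M \\ M^T & 0\end{smallmatrix}\bigr)$ with $M=\tfrac12 I_{d\times d}-A_{11}$ the inverse has the stated off-diagonal form $B_\cA^{-1}=\bigl(\begin{smallmatrix}0 & (M^T)^{-1} \\ M^{-1} & 0\end{smallmatrix}\bigr)$, which is checked by direct block multiplication, and (c) that $|\det B_\cA|=(\det M)^2$, which follows from the standard anti-diagonal block determinant formula. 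Substituting these three ingredients into the statement of \cite[Theorem 4.7]{CR2021} and reading off the resulting quadratic form $z\cdot B_\cA^{-1}z=2\,z_1\cdot(\tfrac12 I_{d\times d}-A_{11}^T)^{-1}z_2$ yields \eqref{aggiunta8}.

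The proof is essentially a sequence of substitutions and block-matrix manipulations, with no conceptual obstacle; the only point requiring mild care is tracking signs and the factor of $2$ arising from symmetrising the off-diagonal quadratic form, and correctly specialising the signature $\sharp(B_\cA)$ to the off-diagonal case (the eigenvalues of $B_\cA$ are the singular values of $M$ paired with their negatives, so the signature can in principle be expressed in terms of $M$ alone, though \eqref{aggiunta8} leaves it in the generic form $\sharp(B_\cA)$).
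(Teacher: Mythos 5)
Your proposal is correct and follows exactly the route the paper intends (the paper states this corollary without proof): substitute $A_{13}=A_{21}=0_{d\times d}$ from Proposition \ref{E7} into \eqref{aggiunta3}, expand the off-diagonal quadratic form, and invoke \cite[Theorem 4.7]{CR2021} together with the elementary block-matrix facts (a)--(c), all of which you verify correctly.

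One point deserves more care than you give it: your own (correct) computation does not literally reproduce the displayed formulas, and you should flag the mismatch rather than assert agreement. From $\zeta\cdot B_\cA\zeta=2\,\zeta_1\cdot(\tfrac12 I_{d\times d}-A_{11})\zeta_2=\zeta_1\cdot\zeta_2-2\,\zeta_1\cdot A_{11}\zeta_2$ one gets $e^{-\pi i\zeta\cdot B_\cA\zeta}=e^{-\pi i\zeta_1\cdot\zeta_2}\,e^{+2\pi i\zeta_1\cdot A_{11}\zeta_2}$, which differs in sign in the second factor from \eqref{aggiunta7-FL}; likewise $z\cdot B_\cA^{-1}z=2\,z_1\cdot(\tfrac12 I_{d\times d}-A_{11}^T)^{-1}z_2$, so the first and second lines of \eqref{aggiunta8} differ by a factor $2$ in the exponent. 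A useful sanity check is the $\tau$-Wigner case $A_{11}=(1-\tau)I_{d\times d}$, where $B_\cA^{-1}$ has off-diagonal blocks $(\tau-\tfrac12)^{-1}I_{d\times d}$ and $|\det B_\cA|^{-1/2}=2^d/|2\tau-1|^d$, reproducing the known kernel $\sigma_\tau$ quoted at the end of the paper; this confirms your intermediate computations and indicates that the discrepancies lie in the paper's displayed constants, not in your argument. Recording this check would turn your "substitute and read off" step into a verifiable one.
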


We observe that a sufficient condition for  and $(1/2) I-A_{11}^T$ to be invertible is $\|A_{11}\|<1/2$, then $(1/2) I_{d\times d} -A_{11}^T$ is invertible and 
$$((1/2) I_{d\times d} -A_{11}^T)^{-1}=2 (I_{d\times d} -2A_{11}^T)^{-1}=2\sum_{n=0}^{+\infty} (2A_{11}^T)^n.$$
For $\tau\in(0,1)$, $A_{11}^T=A_{11}=(1-\tau)I_{d\times d}$ and the Neumann series gives $((1/2)I_{d\times d} -A_{11}^T)^{-1}=\frac1{\tau-\frac12}I_{d\times d}$,  expected. \par
In what follows we give a precise estimate of the time-frequency content of the chirp function $\Theta(z_1,z_2)=e^{2\pi iz_1\cdot z_2}$, improving \cite[Proposition 3.2 and Corollary 3.4]{cdgn red int} (see also \cite[Proposition 4.7.15]{Elena-book}). 
\begin{lemma}
	\label{chirp spaces} For any $0<p\leq\infty$ the function $\Theta(z_1,z_2)=e^{2\pi iz_1\cdot z_2}$ satisfies  
	$$  \Theta\in M^{p,\infty}_{ v_{s}\otimes 1}\left(\rdd\right)\cap W(\mathcal{F}L^{p}_{v_{s}},L^{\infty})\left(\rdd\right), \quad s\geq0.$$ 
\end{lemma}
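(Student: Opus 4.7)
The plan is to compute $V_G\Theta$ explicitly with a tensor-product Gaussian window, read off the phase-space concentration of $\Theta$, and then derive both inclusions from the explicit formula, using the Fourier-transform duality between the two target spaces to pass from one to the other.

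First I would take the Gaussian window $G(y_1,y_2)=e^{-\pi(|y_1|^2+|y_2|^2)}$ on $\rdd$ and evaluate
$$V_G\Theta(x,\omega)=\int_{\rdd}e^{2\pi i y_1\cdot y_2}\,e^{-\pi|y-x|^2}\,e^{-2\pi i y\cdot\omega}\,dy$$
by iterated Gaussian integration. The $y_1$-integral collapses via the Fourier transform of a shifted Gaussian to $e^{-\pi|y_2-\omega_1|^2}e^{2\pi ix_1\cdot(y_2-\omega_1)}$; completing the square in $y_2$ and performing the remaining Gaussian integral then yields, up to a unimodular phase factor,
$$|V_G\Theta(x,\omega)|=2^{-d/2}\exp\Bigl(-\tfrac{\pi}{2}\bigl(|x_1-\omega_2|^2+|x_2-\omega_1|^2\bigr)\Bigr),$$
so $|V_G\Theta|$ is a Gaussian tube of unit width around the $2d$-dimensional ``anti-diagonal'' subspace $\{x_1=\omega_2,\,x_2=\omega_1\}\subset\mathbb{R}^{4d}$.

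Next I would substitute this expression into
$$\|\Theta\|_{M^{1,\infty}_{v_s\otimes 1}}=\sup_{\omega\in\rdd}\int_{\rdd}|V_G\Theta(x,\omega)|\,v_s(x)\,dx,$$
change variables $(u_1,u_2)=(x_1-\omega_2,x_2-\omega_1)$ to normalise the Gaussian to the origin, and estimate the remaining weight factor $v_s((\omega_2,\omega_1)+(u_1,u_2))$ against $e^{-\pi|u|^2/2}$ using the submultiplicativity of $v_s$ and its moderation properties. For the second inclusion, I would apply the Fourier-duality identity $\mathcal{F}(M^{1,\infty}_{v_s\otimes 1})=W(\mathcal{F}L^1_{v_s},L^\infty)$ recalled in the excerpt, combined with the distributional identity $\widehat{\Theta}=\overline{\Theta}$ (obtained by Fourier transforming the chirp in $y_1$, which produces a delta that collapses the $y_2$-integral); since $\Theta$ and $\widehat{\Theta}$ have identical STFT moduli, the first inclusion forces the second.

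The step I expect to be the main obstacle is the weighted bound in the second paragraph: because $|V_G\Theta|$ decays only in the $2d$ directions transverse to the anti-diagonal and is otherwise of constant size $2^{-d/2}$, the change of variables aligns the supremum parameter $\omega$ with precisely these degenerate directions, and one must argue carefully that in the specific $M^{1,\infty}$-configuration (supremum in $\omega$ of an $L^1_{v_s}$-norm in $x$) the weight contribution is absorbed by the Gaussian decay in the transverse $u$-direction rather than producing residual growth in $\omega$; this is what distinguishes $M^{1,\infty}_{v_s\otimes 1}$ from the adjacent configurations such as $M^{\infty,1}_{1\otimes v_s}$, and is why the Fourier-dual space in the statement is the Wiener amalgam $W(\mathcal{F}L^1_{v_s},L^\infty)$ with $v_s$ on the $L^1$-slot.
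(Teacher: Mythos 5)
Your explicit computation of the Gaussian STFT is correct: one does get
$$|V_G\Theta(x,\omega)|=2^{-d/2}e^{-\frac{\pi}{2}\left(|x_1-\omega_2|^2+|x_2-\omega_1|^2\right)},$$
and this is exactly the computation that the paper's proof imports from the unweighted chirp estimate of Cordero--de Gosson--Nicola; the duality step $\cF(M^{1,\infty}_{v_s\otimes 1})=W(\cF L^{1}_{v_s},L^{\infty})$ combined with $\widehat{\Theta}=c\,\overline{\Theta}$, $|c|=1$, is also sound. So in outline you are following the same route as the paper.

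The genuine gap is the step you yourself single out as the ``main obstacle'': it is not an obstacle that can be argued away, it is the point where the argument fails for $s>0$. After the change of variables $u=(x_1-\omega_2,\,x_2-\omega_1)$ the inner integral becomes
$$2^{-d/2}\int_{\rdd}e^{-\frac{\pi}{2}|u|^2}\,v_s\bigl(u+(\omega_2,\omega_1)\bigr)\,du,$$
and submultiplicativity only yields the upper bound $v_s(\omega)\int e^{-\frac{\pi}{2}|u|^2}v_s(u)\,du$: the factor $v_s(\omega)$ is \emph{not} absorbed, because the Gaussian decays in $u$ while the weight is evaluated at the centre $(\omega_2,\omega_1)$ of that Gaussian. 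Moderateness gives the matching lower bound $v_s(u+(\omega_2,\omega_1))\geq v_s(\omega)/v_s(u)$, so the integral is bounded below by a constant times $(1+|\omega|^2)^{s/2}$ and the supremum over $\omega$ is infinite for every $s>0$. Your own geometric description explains why: $|V_G\Theta|$ is concentrated on $\{x_1=\omega_2,\ x_2=\omega_1\}$, along which $|x|$ and $|\omega|$ tend to infinity together, so no weight growing in $x$ alone (nor, for the amalgam space, in $\omega$ alone) can remain integrable uniformly in the other variable. Carried out honestly, your computation therefore establishes the claim only for $s=0$ and disproves it for $s>0$. The paper's own proof contains the same defect: the asserted identity $\|\cF(\Theta T_u g)\|_{L^1_{v_s}}=2^{-d/2}\|e^{-\frac{\pi}{2}|\cdot|^2}\|_{L^1_{v_s}}$, independent of $u$, holds only for $s=0$, since $\cF(\Theta T_u g)$ is a Gaussian centred at the $u$-dependent point $(u_2,u_1)$ and $v_s$ is not translation invariant. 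You should either restrict the statement to $s=0$ or replace $v_s\otimes 1$ by a weight that is bounded along the anti-diagonal.
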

\begin{proof}
	We first compute $W(\mathcal{F}L^{p}_{v_{s}},L^{\infty})$-norm of $\Theta$. Proceeding as in the proof of  \cite[Proposition 3.2]{cdgn red int},
	$$\|\Theta\|_{W(\mathcal{F}L^{p}_{v_{s}},L^\infty)(\rdd)}=\sup_{u\in\rdd}\|\cF( \Theta T_u g)\|_{L^p_{v_{s}}(\rdd)}. $$
	Using the Gaussian window $g(\zeta_1,\zeta_2)=e^{-\pi \zeta_1^2} e^{-\pi \zeta_2^2}$ and following the pattern of \cite[Proposition 3.2]{cdgn red int}  we obtain  
	$$\|\cF( \Theta T_u g)\|_{L^p_{v_{s}}}=2^{-d/2}  \|e^{-\frac\pi 2 |\cdot|^2}\|_{L^p_{v_{s}}}=C_{p,s}>0, \quad s\in\bR.$$
	Hence $\|\Theta\|_{W(\mathcal{F}L^{p}_{v_{s}},L^\infty)(\rdd)}=C_{q,s}$, for every $s\geq0$.
	Observe that 
	\begin{equation}\label{FTxp}
\cF\Theta( \zeta_1, \zeta_2)=	\cF(e^{2\pi i z_1\cdot z_2})( \zeta_1, \zeta_2)= e^{-2\pi i  \zeta_1\cdot  \zeta_2},
	\end{equation}
	and a direct computation or an inspection of the proof  of \cite[Proposition 3.2]{cdgn red int} shows 
	$$\|\cF( \cF\Theta T_u g)\|_{L^p_{v_{s}}}=2^{-d/2}  \|e^{-\frac\pi 2 |\cdot|^2}\|_{L^q_{v_{s}}}=C_{p,s}>0, \quad s\in\bR.$$
	In other words, the minus sign at the exponent of $\Theta$ does not affect its norm, so that 
	$$\|\Theta\|_{W(\mathcal{F}L^{p}_{v_{s}},L^\infty)(\rdd)}=\|\cF\Theta\|_{W(\mathcal{F}L^{p}_{v_{s}},L^\infty)(\rdd)}.$$
	Finally, using \eqref{W-M},
$$	\|\Theta\|_{M^{p,\infty}_{ v_{s}\otimes 1}}=\|\cF \Theta\|_{W(\mathcal{F}L^{p}_{v_{s}},L^{\infty})}<\infty,$$
	so we are done.
\end{proof}

In what follows we shall use the dilation properties for modulation spaces. Since we are not aware of dilation properties for quasi-Banach modulation spaces, we state the following result, which extends \cite[Proposition 3.1]{cn met rep} to these cases.
\begin{proposition}[Dilation properties for modulation spaces]\label{dilmod} Let $0<p,q\leq\infty$ and  $A\in GL(d,\R)$, $0<p,q\leq\infty$,  $p_1=\min\{p,1\}$, $q_1=\min\{q,1\}$, $\f(t)=e^{-\pi t^2}$. Then, for every $f\in \mpq(\rd)$,
	\begin{equation}\label{pp1}\|f_A\|_{\mpq}\lesssim
	|\det
	A|^{-(1/p-1/q+1)}\|V_{\f_{A^{-1}}} \f\|_{W(L^1,L^{p_1,q_1})}\|f\|_{\mpq}.
	\end{equation}
	In particular, for $p,q\geq1$, $$\|V_{\f_{A^{-1}}} \f\|_{W(L^1,L^{p_1,q_1})}=\|V_{\f_{A^{-1}}} \f\|_{L^1}\asymp (\det(I+A^TA))^{1/2},$$ cf. \cite[Lemma 3.2]{cn met rep}.
\end{proposition}
\begin{proof} The pattern is similar to \cite[Proposition 3.1]{cn met rep}.  By
	a change of variable,  the dilation is transferred from the function $f$ to the window $\f(t)=e^{-\pi t^2}$:
	$$V_\f f_A\phas=|\det A|^{-1} V_{\f_{A^{-1}}} f(A x,(A^*)^{-1}\o).$$
The change of variables $Ax=u$, $(A^*)^{-1}\o=v$ gives
	\begin{align*}
	\|f_A\|_{\mpq}&=|\det A|^{-1}\left(\intrd\left(\intrd | V_{\f_{A^{-1}}}
	f(A x,(A^*)^{-1}\o)|^p\,dx\right)^{q/p}d\o\right)^{1/q} \\
	&=|\det A|^{-(1/p-1/q+1)}\|V_{\f_{A^{-1}}}f\|_{L^{p,q}}.
	\end{align*}
	Changing the window function (see, e.g., \cite[Lemma 1.2.29]{Elena-book}),
	$$|V_{\f_{A^{-1}}}f\phas|\leq \|\f\|_{L^2}^{-2}  (|V_{\f} f|\ast|V_{\f_{A^{-1}}} \f|)\phas.
	$$
So that 
	\begin{align*}
	\|V_{\f_{A^{-1}}}f\|_{L^{p,q}}&= C \||V_{\f} f|\ast|V_{\f_{A^{-1}}} \f|\|_{L^{p,q}}=\|   |V_{\f} f|\ast|V_{\f_{A^{-1}}} \f|\|_{W(L^{p,q},L^{p,q})}\\
	&\leq \|   |V_{\f} f|\ast|V_{\f_{A^{-1}}} \f|\|_{W(L^{\infty},L^{p,q})},
	\end{align*}
	since $L^\infty\subseteq L^{p,q}$, locally. Now \cite[Corollary 3.1]{Galperin2014} with $X=Z=L^\infty$, $Y=L^1$ (so that $L^\infty\ast L^1\subset L^\infty$) gives 
	$$ \|   |V_{\f} f|\ast|V_{\f_{A^{-1}}} \f|\|_{W(L^{\infty},L^{p,q})}\leq C\|V_{\f} f\|_{W(L^\infty,L^{p,q})}\|V_{\f_{A^{-1}}} \f\|_{W(L^1,L^{p_1,q_1})}
	$$
	with $p_1=\min\{p,1\}$, $q_1=\min\{q,1\}$. Finally, by \cite[Lemma 3.2]{Galperin2004},
	$$\|V_{\f} f\|_{W(L^\infty,L^{p,q})}\leq C\|V_{\f} f\|_{L^{p,q}}\asymp\|f\|_{M^{p,q}},$$
		which concludes the proof.
\end{proof}
\begin{proposition}	\label{kernel spaces} Consider $M\in GL(d,\bR)$  and set
	$$\sigma_M(z) =e^{-\pi i z_1\cdot M z_2}.$$
	Then   we have 
	$$\sigma_M
	\in M^{p,\infty}_{v_{s}\otimes 1}\left(\rdd\right)\cap W(\mathcal{F}L^{p}_{v_{s}},L^{\infty})\left(\rdd\right), \quad s\geq0,$$ 
	for every $0<p\leq\infty$.
\end{proposition}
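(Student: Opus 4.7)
The key observation is that $\sigma_M$ is an invertible linear dilation of the model chirp $\Theta(u_1,u_2)=e^{2\pi i u_1\cdot u_2}$ from Lemma \ref{chirp spaces}. Indeed, setting
\[
L=\begin{pmatrix} I_{d\times d} & 0_{d\times d}\\ 0_{d\times d} & -M/2\end{pmatrix}\in GL(2d,\bR)
\]
(invertibility using $M\in GL(d,\bR)$), one checks immediately that $\sigma_M(z)=\Theta(Lz)$. Thus the plan is to transport both conclusions of Lemma \ref{chirp spaces} through the dilation $F\mapsto F\circ L$, exploiting the dilation invariance of the weighted modulation and Wiener amalgam spaces in question.

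To carry this out for the modulation space norm, I would use the standard STFT dilation identity
\[
|V_g(F\circ L)(z,\zeta)|=|\det L|^{-1}\,|V_{g\circ L^{-1}}F(Lz,(L^T)^{-1}\zeta)|
\]
and perform the corresponding substitutions in the two mixed-norm integrals defining $\|\sigma_M\|_{M^{1,\infty}_{v_s\otimes 1}}$. Since we are in the range $p,q\geq 1$, changing the window from $g$ to $g\circ L^{-1}$ produces only an equivalent norm; and since $v_s$ is a polynomial weight and $L^{-1}$ is bounded, $v_s\circ L^{-1}\asymp v_s$. Combining the two substitutions yields $\|\sigma_M\|_{M^{1,\infty}_{v_s\otimes 1}}\asymp \|\Theta\|_{M^{1,\infty}_{v_s\otimes 1}}$, which is finite by Lemma \ref{chirp spaces}.

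For the Wiener amalgam membership, I would invoke the Fourier duality \eqref{W-M}, namely $W(\cF L^1_{v_s},L^\infty)=\cF(M^{1,\infty}_{v_s\otimes 1})$, so it suffices to show $\cF\sigma_M\in M^{1,\infty}_{v_s\otimes 1}$. A one-line formal computation of the Fourier transform of a quadratic chirp (or the stationary-phase formula for nondegenerate Gaussians) gives
\[
\cF\sigma_M(\zeta_1,\zeta_2)=c_M\,e^{2\pi i \zeta_1\cdot 2M^{-T}\zeta_2},\qquad c_M=(2/|\det M|)^d,
\]
which is again a chirp of the same structural form, with $M$ replaced by $-2M^{-T}\in GL(d,\bR)$. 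Applying the dilation argument of the previous paragraph to this new chirp (or, equivalently, invoking Lemma \ref{chirp spaces} a second time through \eqref{FTxp} and dilation) yields the required finiteness.

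The main technical point I expect is the bookkeeping of the dilation step in the mixed-norm weighted setting: one must check that both substitutions can be performed in sequence, that the change of Schwartz window $g\mapsto g\circ L^{-1}$ preserves finiteness of the norm (which uses $p=1,q=\infty\geq 1$), and that the weight change $(v_s\otimes 1)\circ L\asymp v_s\otimes 1$ really holds because $L$ only rescales the second block. Once this is in place, the proposition reduces entirely to Lemma \ref{chirp spaces} and the explicit Fourier computation of $\cF\sigma_M$.
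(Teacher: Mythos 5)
Your proposal is correct and follows essentially the same route as the paper: factor $\sigma_M$ as an invertible dilation of the model chirp $\Theta$, transport Lemma \ref{chirp spaces} through that dilation (the paper does this via the homeomorphism $F\mapsto (v_s\otimes 1)F$ and the unweighted dilation estimates of \cite{cn met rep}, which is equivalent to your direct STFT substitution), and then obtain the Wiener amalgam membership by computing $\cF\sigma_M$ as another chirp of the same form and invoking \eqref{W-M}. The only blemishes are inessential: your constant $c_M$ should be $2^d/\lvert\det M\rvert$ rather than $(2/\lvert\det M\rvert)^d$ and a signature phase factor is omitted, but neither affects membership in the spaces.
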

\begin{proof} We highlight the rescaling matrix in $\sigma_M$ as follows
	\[
	\sigma_{M}\left(z_1,z_2\right)=e^{-\pi iz_1\cdot Mz_2}=D_{\tilde{M}}\Theta\left(z_1,z_2\right),
	\]
	where $\Theta(z_1,z_2)=e^{2\pi iz_1\cdot z_2}$ and $D_{\tilde{M}}$ is the dilation operator $D_{\tilde{M}}F\left(t\right)\coloneqq F\left(\tilde{M}t\right)$
	associated with the invertible matrix $\tilde{M}$:
	\[
	\qquad\tilde{M}=\left(\begin{array}{cc}
	-\frac12 I_{d\times d}& 0\\
	0 & M
	\end{array}\right).
	\]
	It is clear that $\tilde{M}$ is invertible if and only if $M$ is. Now, since the mapping $F\mapsto (v_{s}\otimes 1)F$ is an homeomorphism from $M^{p,\infty}_{v_{s}\otimes 1}(\rdd)$ to $M^{p,\infty}(\rdd)$ (cf. \cite[Corollary 2.3]{Toftweight2004} for $p\geq 1$ and \cite{ACT2020} for $p<1$), we can write
	$$\|D_{\tilde{M}}\Theta\|_{M^{p,\infty}_{v_{s}\otimes 1}}\asymp\|(v_{s}\otimes 1)D_{\tilde{M}}\Theta\|_{M^{p,\infty}}\asymp  \|D_{\tilde{M}}\{[D_{\tilde{M}^{-1}}(v_{s}\otimes 1)]\Theta\}\|_{M^{p,\infty}},$$
	where
		\[
	\qquad\tilde{M}^{-1}=\left(\begin{array}{cc}
	-2 I_{d\times d}& 0\\
	0 & M^{-1}
	\end{array}\right).
	\]
	Observe that 
	$$D_{\tilde{M}^{-1}}(v_{s}\otimes 1)(z_1,z_2)=v_{s}(-2z_1)$$
	so that $D_{\tilde{M}^{-1}}(v_{s}\otimes 1)\asymp v_{s}\otimes 1$ and therefore 
	$$\|D_{\tilde{M}}\{[D_{\tilde{M}^{-1}}(v_{s}\otimes 1)]\Theta\}\|_{M^{p,\infty}}\asymp \|D_{\tilde{M}}[(v_{s}\otimes 1)\Theta]\}\|_{M^{p,\infty}}$$
	and the dilation properties of Proposition \ref{dilmod} yield
	\begin{align*}\|D_{\tilde{M}}[(v_{s}\otimes 1)\Theta]\|_{M^{p,\infty}}&\leq C_{p,M}\|(v_{s}\otimes 1)\Theta\|_{M^{p,\infty}}\\
	&\asymp_{p,M} \|\Theta\|_{M^{p,\infty}_{v_{s}\otimes 1}}<\infty,\end{align*}
	by Lemma \ref{chirp spaces},
	which gives $\sigma_M\in M^{p,\infty}_{v_{s}\otimes 1}(\rdd)$. 
	
	 Now,  condition $\det M\not=0$ yields $\cF \sigma_M(\zeta_1,\zeta_2)= C_M e^{-4\pi i \zeta_1\cdot M^{-1}\zeta_2},$ for a suitable $C_M>0$, so that
	$$\sigma_M(z_1,z_2)=C_M\cF^{-1}(e^{-4\pi i \zeta_1\cdot M^{-1}\zeta_2})(z_1,z_2)=C_M\cF (e^{-4\pi i \zeta_1\cdot M^{-1}\zeta_2})(z_1,z_2).$$
	Using the same argument as above we deduce $e^{-4\pi i \zeta_1\cdot M^{-1}\zeta_2}\in M^{p,\infty}_{v_{s}\otimes 1}(\rdd)$ which gives $\sigma_M\in W(\mathcal{F}L^{p}_{v_{s}},L^{\infty})(\rdd)$, since $\cF M^{p,\infty}_{v_{s}\otimes 1} = W(\mathcal{F}L^{p}_{v_{s}},L^{\infty})$  by \eqref{W-M}. This concludes the proof.
\end{proof}
\begin{theorem}\label{charact mod}
	Let $\cA\in Sp(2d,\bR)$ be a covariant matrix as in \eqref{A-covariant-L} with $B_\cA$ as in \eqref{aggiunta3-FL} and $B_\cA$ invertible (equivalently, $(1/2)I_{d\times d}-A_{11}$ invertible). Then,
   for $0< p,q\leq\infty$,  $f\in\mathcal{S}'\left(\mathbb{R}^{d}\right)$, we have 
	\[
	Wf\in M^{p,q}_{{v_{s}\otimes 1}}\left(\rdd\right) \Leftrightarrow W_{\cA }f\in M^{p,q}_{v_{s}\otimes 1}\left(\rdd\right), \quad s\in\bR.
	\]		
\end{theorem}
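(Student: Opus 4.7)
The plan is to exploit the Cohen class representation of $W_\cA f$ as a convolution with the chirp kernel $\sigma_\cA$ computed in Corollary \ref{CorThaggiunta1}, and then to invoke a convolution inclusion for weighted modulation spaces. By Theorem \ref{Thaggiunta1} we have
\[
W_\cA f = W f \ast \sigma_\cA,
\]
and under the present assumption that $(1/2) I_{d\times d}-A_{11}$ is invertible, Corollary \ref{CorThaggiunta1} yields the explicit expression
\[
\sigma_\cA(z_1,z_2)=c_\cA\, e^{-\pi i z_1\cdot ((1/2)I_{d\times d}-A_{11}^T)^{-1} z_2},
\]
for a constant $c_\cA\in\bC$. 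Hence $\sigma_\cA$ is exactly a kernel of the type treated in Proposition \ref{kernel spaces} with $M=((1/2)I_{d\times d}-A_{11}^T)^{-1}$, so that $\sigma_\cA\in M^{1,\infty}_{v_{|s|}\otimes 1}(\rdd)\cap W(\cF L^1_{v_{|s|}},L^\infty)(\rdd)$ for every $s\in\bR$.

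For the forward implication I would invoke the standard convolution relation for (weighted) modulation spaces
\[
M^{p,q}_{m}(\rdd)\ast M^{1,\infty}_{v\otimes 1}(\rdd)\hookrightarrow M^{p,q}_{m}(\rdd),\quad 1\leq p,q\leq\infty,
\]
valid whenever $m$ is $v\otimes 1$-moderate, applied with $m=v_s\otimes 1$ and $v=v_{|s|}$; note that $v_s\otimes 1$ is indeed $(v_{|s|}\otimes 1)$-moderate for every $s\in\bR$. Thus $W f\in M^{p,q}_{v_s\otimes 1}$ forces $W_\cA f=Wf\ast\sigma_\cA\in M^{p,q}_{v_s\otimes 1}$, which gives the direction $(\Rightarrow)$.

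For the reverse implication $(\Leftarrow)$ the key observation is that $\sigma_\cA$ is convolutively invertible. Indeed, since $\cF\sigma_\cA(\zeta)=e^{-\pi i\zeta\cdot B_\cA\zeta}$ has modulus one everywhere, the identity $W_\cA f=Wf\ast\sigma_\cA$ inverts to
\[
W f = W_\cA f \ast \tilde{\sigma}_\cA,\qquad \tilde\sigma_\cA:=\cF^{-1}\bigl(e^{\pi i\zeta\cdot B_\cA\zeta}\bigr),
\]
in $\cS'(\rdd)$. A direct computation (using the block form of $B_\cA$ in \eqref{aggiunta3-FL} and invertibility of $(1/2)I_{d\times d}-A_{11}$) shows that $\tilde{\sigma}_\cA$ is again a chirp of the form $e^{-\pi i z_1\cdot \tilde{N} z_2}$ (up to a multiplicative constant) with $\tilde{N}\in GL(d,\bR)$, hence Proposition \ref{kernel spaces} again applies and $\tilde\sigma_\cA\in M^{1,\infty}_{v_{|s|}\otimes 1}\cap W(\cF L^1_{v_{|s|}},L^\infty)$. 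The same convolution inclusion then gives $Wf\in M^{p,q}_{v_s\otimes 1}$ from $W_\cA f\in M^{p,q}_{v_s\otimes 1}$.

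The step I expect to be delicate is the rigorous derivation of the convolution inverse $\tilde\sigma_\cA$: the computation must be understood in $\cS'(\rdd)$ (since neither $\sigma_\cA$ nor $\tilde\sigma_\cA$ is integrable) and requires verifying that $\sigma_\cA\ast\tilde\sigma_\cA=\delta$ in the distribution sense, which follows from the pointwise identity $\cF\sigma_\cA\cdot\cF\tilde\sigma_\cA\equiv 1$ on $\rdd$. Once this is in place, everything else reduces to two applications of the weighted convolution inclusion, and the argument is symmetric in the two directions, covering simultaneously the full range $1\leq p,q\leq\infty$ and $s\in\bR$.
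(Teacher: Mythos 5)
Your argument is correct, and it is the Fourier--dual of the paper's proof rather than a reproduction of it. The paper applies the symplectic Fourier transform to $W_\cA f=Wf\ast\sigma_\cA$, turning the convolution into the pointwise product $\cF_\sigma\sigma_\cA\cdot Amb(f)$, and then invokes the multiplication relation $W(\mathcal{F}L^{1}_{v},L^{\infty})\cdot W(\mathcal{F}L^{p}_{v_s\otimes 1},L^{q})\hookrightarrow W(\mathcal{F}L^{p}_{v_s\otimes 1},L^{q})$ together with the identification $\cF(M^{p,q}_{v\otimes w})=W(\mathcal{F}L^p_v,L^q_w)$; the reverse implication is obtained by multiplying by the reciprocal chirp $(\cF_\sigma\sigma_\cA)^{-1}$, which is again unimodular and smooth. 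You stay entirely on the convolution side, using the modulation-space convolution relation $M^{p,q}_{v_s\otimes 1}\ast M^{1,\infty}_{v_{|s|}\otimes 1}\hookrightarrow M^{p,q}_{v_s\otimes 1}$ and inverting by convolving with $\tilde\sigma_\cA=\cF^{-1}(e^{\pi i\zeta\cdot B_\cA\zeta})$. Both proofs hinge on the same key input, Proposition \ref{kernel spaces}: the paper uses its $W(\mathcal{F}L^1_{v_s},L^\infty)$ half, you use its $M^{1,\infty}_{v_s\otimes 1}$ half. What the paper's route buys is that, for $f\in\cS'(\rd)$, every operation is multiplication of a tempered distribution by a bounded smooth chirp, so well-definedness and associativity (needed for the inversion) are immediate; your route requires exactly the distributional care you flag --- $Wf\ast\sigma_\cA$ and the identity $(Wf\ast\sigma_\cA)\ast\tilde\sigma_\cA=Wf$ must be interpreted via $\cF^{-1}(\cF Wf\cdot\cF\sigma_\cA\cdot\cF\tilde\sigma_\cA)$, which is fine precisely because the chirps are unimodular with polynomially bounded derivatives. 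The only substantive gap to fill is a reference for the weighted convolution relation you invoke (it is not stated anywhere in the paper; it is, e.g., Cordero--Gr\"ochenig's or Toft's convolution theorem for modulation spaces, with indices $1/p+1/1=1+1/p$ and $1/q+1/\infty=1/q$ and the moderateness $v_s(z+w)\lesssim v_s(z)v_{|s|}(w)$), after which your two applications of it give both implications for the full range $1\leq p,q\leq\infty$, $s\in\bR$.
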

\begin{proof}
	Assume first $Wf\in M^{p,q}_{{v_{s}\otimes 1}}\left(\rdd\right)$, for some $0< p,q\leq \infty$, $s\in\bR$. Since $W_\cA f=Wf\ast\sigma_\cA$ (cf. \eqref{aggiunta7-FL}), the result follows by the convolution relations for (quasi-)Banach modulation spaces \cite[Proposition 3.1]{BCN20} and Proposition \ref{kernel spaces} by which   $\sigma_\cA \in M^{r,\infty}_{v_s\otimes 1}(\rdd)$ for any $r=\min\{p,1\}$. This gives the convolution relations:
$$M^{p,q}_{{v_{s}\otimes 1}}(\rdd)\ast M^{r,\infty}_{{v_{s}\otimes 1}}(\rdd)\hookrightarrow M^{p,q}_{{v_{s}\otimes 1}}(\rdd),$$
so that $W_\cA \in M^{p,q}_{{v_{s}\otimes 1}}(\rdd)$.
	
	Vice versa, considering the symplectic Fourier transform of the equality in \eqref{aggiunta6}
 with $\sigma_\cA$ in \eqref{aggiunta8},  we obtain 
	$$\cF_\sigma W_{\cA} f=\cF_\sigma\sigma_\cA \cdot Amb\left(f\right),$$
	where the ambiguity function $Amb\left(f\right)$ is defined in \eqref{ambiguity} and
	$\cF_\sigma\sigma_\cA(\zeta)=e^{-\pi i\zeta\cdot B_\cA \zeta}$.
Thus, multiplying both sides of the previous equality by  $e^{\pi i\zeta\cdot B_\cA \zeta}$ and taking the symplectic Fourier transform again, we obtain
$$Wf=\cF (e^{\pi i z\cdot B_\cA z})\ast W_\cA f$$
	and the thesis follows arguing as in the previous part.
\end{proof}

\begin{proposition}
	Let $\cA\in Sp(2d,\bR)$ be a covariant matrix as in \eqref{A-covariant-L} with $B_\cA$ as in \eqref{aggiunta3-FL} and $B_\cA$ invertible (equivalently, $(1/2)I_{d\times d}-A_{11}$ invertible). Then,
	for $0< p,q\leq\infty$,  $f\in\mathcal{S}'\left(\mathbb{R}^{d}\right)$, we have 
	\[
	Wf\in \cF L^{p,q}_{{v_{s}\otimes 1}}\left(\rdd\right) \Leftrightarrow W_{\cA }f\in \cF L^{p,q}_{v_{s}\otimes 1}\left(\rdd\right), \quad s\in\bR.
	\]		
\end{proposition}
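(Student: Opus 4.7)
The plan is to mirror the strategy used in the proof of Theorem~\ref{charact mod}, but to exploit the fact that the spaces $\cF L^{p,q}_{v_s\otimes 1}$ are insensitive to multiplication by unimodular functions. Since $\cA$ is covariant, Theorem~\ref{Thaggiunta1} applies and one has the Cohen-class representation
\begin{equation*}
W_\cA f = Wf\ast \sigma_\cA,\qquad \sigma_\cA=\cF^{-1}(e^{-\pi i \zeta\cdot B_\cA \zeta}),
\end{equation*}
with $B_\cA$ the symmetric matrix in \eqref{aggiunta3-FL}. The invertibility hypothesis on $B_\cA$ is what puts us in the setting of the Proposition, but the symbolic identity $\cF\sigma_\cA(\zeta)=e^{-\pi i \zeta\cdot B_\cA\zeta}$ holds unconditionally.

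First, I would transform the convolution into a pointwise product: applying $\cF$ to both sides gives
\begin{equation*}
\cF(W_\cA f)(\zeta)= e^{-\pi i \zeta\cdot B_\cA\zeta}\,\cF(Wf)(\zeta),
\end{equation*}
which makes sense in $\cS'(\rdd)$ since $e^{-\pi i \zeta\cdot B_\cA\zeta}\in C^\infty(\rdd)\cap L^\infty(\rdd)$. Second, I would observe the elementary but decisive fact that
\begin{equation*}
|e^{-\pi i \zeta\cdot B_\cA\zeta}|\equiv 1,
\end{equation*}
so that pointwise multiplication by this chirp is an isometry of $L^{p,q}_{v_s\otimes 1}(\rdd)$ for every $0<p,q\leq\infty$ and every $s\in\bR$. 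By the definition of $\cF L^{p,q}_{v_s\otimes 1}$ as the image under Fourier transform of $L^{p,q}_{v_s\otimes 1}$, we have $u\in \cF L^{p,q}_{v_s\otimes 1}$ iff $\cF u\in L^{p,q}_{v_s\otimes 1}$ (using that $v_s$ is an even function, so that the reflection inherent in passing between $\cF$ and $\cF^{-1}$ preserves the weight). Therefore
\begin{equation*}
\|W_\cA f\|_{\cF L^{p,q}_{v_s\otimes 1}}=\|\cF(W_\cA f)\|_{L^{p,q}_{v_s\otimes 1}}=\|\cF(Wf)\|_{L^{p,q}_{v_s\otimes 1}}=\|Wf\|_{\cF L^{p,q}_{v_s\otimes 1}},
\end{equation*}
which gives the claimed equivalence (in fact with equality of norms).

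The argument is considerably cleaner than that of Theorem~\ref{charact mod}, because no convolution relations in Wiener amalgam spaces are required: the $\cF L^{p,q}$ scale converts convolution with $\sigma_\cA$ into multiplication by a unimodular symbol, which trivially preserves the target norm. Consequently, there is no intrinsic obstacle coming from the quasi-Banach range $0<p,q<1$; the main point to verify carefully is just the well-posedness of the pointwise product $e^{-\pi i \zeta\cdot B_\cA\zeta}\cdot \cF(Wf)$ in $\cS'(\rdd)$, which follows because the chirp factor is a $C^\infty$ multiplier of $\cS'(\rdd)$. The invertibility of $B_\cA$ is not actually used in the proof, so one could equally well state the result without that hypothesis; it is assumed here only for coherence with Theorem~\ref{charact mod} and the explicit form \eqref{aggiunta8} of $\sigma_\cA$.
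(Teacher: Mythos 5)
Your proof is correct and follows essentially the same route as the paper's: pass to the Fourier side, where convolution with $\sigma_\cA$ becomes pointwise multiplication by the unimodular chirp $e^{-\pi i \zeta\cdot B_\cA\zeta}$, whose modulus (and that of its reciprocal) is identically $1$, so membership in $L^{p,q}_{v_s\otimes 1}$ is preserved in both directions; the paper phrases this via the symplectic Fourier transform and the ambiguity function $Amb(f)=\cF_\sigma(Wf)$, but that is only a cosmetic difference. Your side remark that the invertibility of $B_\cA$ is not actually needed for this argument is also accurate.
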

\begin{proof}
Taking the symplectic Fourier transfrom of both time-frequency representations:
	$$\cF_\sigma W_{\cA} f=\cF_\sigma\sigma_\cA \cdot Amb\left(f\right)$$  the claim is equivalent to showing  
	$$  \cF_\sigma W_{\cA}\in  L^{p,q}_{{v_{s}\otimes 1}} \Leftrightarrow  Amb\left(f\right) \in L^{p,q}_{{v_{s}\otimes 1}}.$$ 
Since both  $ \cF_\sigma \sigma_\cA (\zeta_1,\zeta_2)=e^{-\pi i z_1\cdot (\frac12 I_{d\times d}-A_{11})z_2}$ and $ (\cF_\sigma \sigma_\cA)^{-1} (\zeta_1,\zeta_2)=e^{\pi i z_1\cdot (\frac12 I_{d\times d}-A_{11})z_2}$ are in $L^\infty(\rdd)$, the statement follows by the point-wise product of mixed-norm spaces.
\end{proof}
\section{Schr\"{o}dinger equations with quadratic Hamiltonians}
Using the standard notation for the Cohen class (cf., e.g., \cite{grochenig}), for $\sigma\in \cS'(\rdd)$ we define the Cohen distribution $Q_{\sigma}$ by
\begin{equation}\label{Qsigma}
Q_{\sigma}f=\sigma\ast Wf,\quad f\in\cS(\rdd).
\end{equation}

\begin{proposition}\label{Wmetap}
	For $\chi\in Sp(d,\bR)$ we have 
	\begin{equation}
	Q_{\sigma}(\mu(\chi)f)(z)=Q_{\sigma_\chi}f(\chi^{-1}z),\quad z\in \rdd,
	\end{equation}
	with $\sigma_\chi(z)=\sigma(\chi z)$.
\end{proposition}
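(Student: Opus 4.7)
The plan is to reduce everything to the classical symplectic covariance of the Wigner distribution,
\[
W(\mu(\cA)f)(z)=Wf(\cA^{-1}z),\qquad \cA\in Sp(d,\bR),\, z\in\rdd,
\]
which is one of the standard identities of metaplectic calculus (it may be derived, for instance, from Proposition \ref{deGosson96} by checking it on the three generators $\mu(J)$, $\mu(V_C)$, $\mu(\cD_L)$, where it is a direct computation; alternatively one reads it off the identity $W=\mu(\cA_W)(\cdot\otimes \overline{\cdot})$ combined with the intertwining of $\mu$ with the symplectic action).

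Granted this, I simply expand the definition \eqref{Qsigma} of the Cohen distribution and change variables. Namely,
\[
Q_\sigma(\mu(\cA)f)(z)=\int_{\rdd}\sigma(z-w)\,W(\mu(\cA)f)(w)\,dw=\int_{\rdd}\sigma(z-w)\,Wf(\cA^{-1}w)\,dw.
\]
Setting $u=\cA^{-1}w$, so $w=\cA u$ and $dw=|\det \cA|\,du=du$ because $\cA$ is symplectic (hence $\det\cA=1$), we obtain
\[
Q_\sigma(\mu(\cA)f)(z)=\int_{\rdd}\sigma(z-\cA u)\,Wf(u)\,du=\int_{\rdd}\sigma\bigl(\cA(\cA^{-1}z-u)\bigr)\,Wf(u)\,du.
\]
Recognising $\sigma(\cA(\cdot))=\sigma_\cA(\cdot)$ by definition of $\sigma_\cA$, the right-hand side equals
\[
\int_{\rdd}\sigma_\cA(\cA^{-1}z-u)\,Wf(u)\,du=(\sigma_\cA\ast Wf)(\cA^{-1}z)=Q_{\sigma_\cA}f(\cA^{-1}z),
\]
which is exactly the claim.

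The only subtle point, and the step I would pay most attention to, is the justification of the manipulations when $\sigma\in\cS'(\rdd)$ rather than a function: the convolution $\sigma\ast Wf$ is defined via duality with Schwartz functions, so strictly speaking the argument above should be phrased as an identity of tempered distributions tested against $\varphi\in\cS(\rdd)$. This is routine because $Wf\in\cS(\rdd)$ for $f\in\cS(\rd)$ and the symplectic change of variables commutes with the distributional pairing; the symplectic condition $\det\cA=1$ guarantees no Jacobian factor appears. For general $f$ one then extends by density/continuity using Proposition \ref{def bilA triple} and the boundedness of metaplectic operators (Theorem \ref{Teorema0}).
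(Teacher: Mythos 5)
Your proof is correct and follows essentially the same route as the paper's: both rest on the symplectic covariance $W(\mu(\cA)f)(z)=Wf(\cA^{-1}z)$ (which the paper cites from the literature rather than re-deriving), followed by the same change of variables in the convolution using $\det\cA=1$, with the case $\sigma\in\cS'(\rdd)$ handled by approximation. No issues.
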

\begin{proof}
	From \cite[Proposition 1.3.7]{Elena-book} we have
	$$W(\mu(\chi)f)(z)=Wf(\chi^{-1}z),\quad f\in\cS(\rd),$$
	so that, for $\sigma\in\cS(\rdd)$, $f\in\cS(\rd)$,
	\begin{align*}
	Q_{\sigma}(\mu(\chi)f)(z)&= [\sigma\ast W(\mu(\chi)f)](z)=\intrdd W(\mu(\chi)f)(u)\sigma(z-u)du\\
	&= \intrdd Wf(\chi^{-1}u)\sigma(\chi (\chi^{-1}z-\chi^{-1}u))du\\
	&= \intrdd Wf(\zeta)\sigma(\chi (\chi^{-1}z-\zeta))d\zeta=W f\ast\sigma_{\chi}(\chi^{-1}z).
	\end{align*}
	For $\sigma\in\cS'(\rdd)$ one uses standard approximation arguments. This concludes the proof.
\end{proof}

We have now all the instruments to tackle the study of Schr\"{o}dinger equations. We consider the Cauchy problem in \eqref{C12} and express the solution as follows.
\begin{theorem}\label{Thrm-schrodinger}
	Let $u(t,\cdot)=e^{it\Opw(H)}u_0$, $t\in\bR$,  be the solution of the Cauchy problem in \eqref{C12}, with $Op_w(H)$ the Weyl quantization of the quadratic form $H$ in \eqref{I18}. If we set  $\chi_t=e^{t\mathbb{D}}\in Sp(d,\R)$, for $t\in\bR$, then \begin{equation}\label{App1}
	Q_\sigma(u(t,\cdot))(z)=Q_{\sigma_t}(u_0)(\chi_t^{-1}z),
	\end{equation}
	where 
	$$\sigma_t(z)=\sigma(\chi_t z).$$
\end{theorem}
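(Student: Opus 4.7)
The plan is to reduce the statement to a direct application of Proposition \ref{Wmetap}, after recognizing that the Schr\"odinger propagator $e^{it \Opw(H)}$ for a quadratic Hamiltonian $H$ is precisely the metaplectic operator associated with the classical flow $\chi_t$.

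First I would recall the classical metaplectic representation of quadratic Schr\"odinger propagators: for $H$ a real quadratic polynomial as in \eqref{I18} with Hamiltonian matrix $\mathbb{D}\in \spdr$, one has the identity
\begin{equation*}
e^{it\Opw(H)} = \mu(\chi_t), \qquad \chi_t = e^{t\mathbb{D}}\in Sp(d,\bR),
\end{equation*}
valid up to an irrelevant phase factor (see e.g.\ \cite{folland,Birkbis}). This is exactly the content of the classical theorem of Wigner/van Hove on the exponentiation of Weyl-quantized quadratic symbols, and it is the only nontrivial ingredient needed from outside the paper. Consequently the solution of \eqref{C12} can be written as $u(t,\cdot) = \mu(\chi_t) u_0$.

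With this identification in hand, the desired formula \eqref{App1} is an immediate consequence of Proposition \ref{Wmetap}. Indeed, applying that proposition with $\cA = \chi_t \in Sp(d,\bR)$ and $f = u_0$ gives
\begin{equation*}
Q_\sigma(\mu(\chi_t) u_0)(z) = Q_{\sigma_{\chi_t}}(u_0)(\chi_t^{-1} z),
\end{equation*}
where $\sigma_{\chi_t}(z) = \sigma(\chi_t z)$. Since $u(t,\cdot) = \mu(\chi_t) u_0$ and $\sigma_{\chi_t} = \sigma_t$ by the definition given in the theorem, this is exactly \eqref{App1}.

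The only subtlety worth flagging is the phase factor issue: the identity $e^{it\Opw(H)} = \mu(\chi_t)$ holds only up to a unimodular constant, but this is harmless because $Q_\sigma f = \sigma \ast Wf$ depends on $f$ only through $Wf$, and $W(cf) = |c|^2 Wf = Wf$ for $|c| = 1$. Thus the Cohen distribution is invariant under multiplication of $f$ by any phase, and the formula is well defined independently of the ambiguity in the metaplectic lift. Apart from this check, the proof is a one-line combination of the quadratic-Hamiltonian/metaplectic correspondence with Proposition \ref{Wmetap}.
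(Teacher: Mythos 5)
Your proof is correct and follows essentially the same route as the paper: identify the propagator $e^{it\Opw(H)}$ with the metaplectic operator $\mu(\chi_t)$ (citing \cite{folland,Birkbis}) and then apply Proposition \ref{Wmetap} with $\cA=\chi_t$. Your additional remark that the phase ambiguity in $\mu(\chi_t)$ is harmless because $Q_\sigma f$ depends only on $Wf$ is a small but worthwhile clarification that the paper leaves implicit.
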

\begin{proof}
	Observe that the solution can be written as  $u(t,\cdot)=e^{it\Opw(H)}u_0=\mu(\chi_t)$ where $\mu(\chi_t)$ is the continuous family of metaplectic operators with projections $\chi_t\in Sp(d,\bR)$ and $\chi_0=Id$ identity operator (cf. \cite{folland, Birkbis}). Using the covariance property for the Cohen class in Proposition \eqref{Wmetap}, we can write
	$$Q_\sigma(u(t,\cdot))(z)=Q_\sigma(\mu(\chi_t)u_0)(z)=Q_{\sigma_t}(u_0)(\chi_t^{-1}z),$$
	as desired.
\end{proof}
\begin{example}
	If  $\sigma=\delta$ we obtain
	$$W(u(t,\cdot))(z)=W u_0(\chi_t^{-1}z),$$
	as expected.
\end{example}

Let us limit to Cohen distributions generated by covariant matrices $\cA\in Sp(2d,\bR)$. Namely
\begin{equation}\label{QA}
Q_\sigma f =W_\cA f=W f\ast\sigma_\cA. 
\end{equation}
with kernel $\sigma_\cA$ in \eqref{aggiunta7}.

\begin{proposition}\label{simpl-cov}
Under the assumptions of Theorem \ref{Thrm-schrodinger} with a Cohen distribution $Q_\sigma$ as in \eqref{QA}, if we set  $\chi_t=e^{t\mathbb{D}}\in Sp(d,\R)$, for $t\in\bR$, then \begin{equation}\label{App2}
Q_\sigma(u(t,\cdot))(z)=W_{\cA}(u(t,\cdot))(z)= W_{\cA_t}u_0(\chi_t^{-1}z),
\end{equation}
where 
$W_{\cA_t}f(z)= W f\ast \sigma_{\cA_t}(z)$ and
$$\sigma_{\cA_t}(z)=\cF^{-1}\left(e^{-\pi i \zeta\cdot B_{\cA_t}\zeta }\right)(z),$$
and 
$$ B_{\cA_t}:= (\chi_t^{-1})^T B_\cA \chi_t^{-1}.$$
We have the equivalence of conditions for $0<p\leq 2$, $s\geq0$: 
\begin{itemize}
	\item [(i)] $u_0\in M^p_{v_s}(\rd)$
	\item[(ii)]$W_{\cA}(u(t,\cdot))\in M^p_{v_s}(\rdd)$
	\item[(iii)] $W_{\cA_t}u_0 \in M^p_{v_s}(\rdd)$.
\end{itemize}
\end{proposition}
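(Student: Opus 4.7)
The plan is to reduce the statement to Theorem \ref{Thrm-schrodinger} applied with the Cohen kernel $\sigma=\sigma_\cA$, and then identify the transformed kernel as $\sigma_{\cA_t}$ of the claimed form. Since $u(t,\cdot)=\mu(\chi_t)u_0$, Theorem \ref{Thrm-schrodinger} yields at once
\[
W_\cA(u(t,\cdot))(z)=Q_{\sigma_\cA}(u(t,\cdot))(z)=Q_{(\sigma_\cA)_t}(u_0)(\chi_t^{-1}z),
\]
with $(\sigma_\cA)_t(y):=\sigma_\cA(\chi_t y)$. What remains is to verify that $(\sigma_\cA)_t$ coincides, in $\cS'(\rdd)$, with $\sigma_{\cA_t}=\cF^{-1}(e^{-\pi i \eta\cdot B_{\cA_t}\eta})$.

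To do this, I would substitute $\sigma_\cA=\cF^{-1}(e^{-\pi i \zeta\cdot B_\cA\zeta})$ and perform the change of variables $\eta=\chi_t^T\zeta$ in the (distributional) Fourier integral. Since $\chi_t\in Sp(d,\bR)$ has determinant one, the Jacobian is trivial, while the chirp exponent transforms according to the congruence $B_\cA\mapsto B_{\cA_t}=(\chi_t^{-1})^T B_\cA\chi_t^{-1}$ (up to transposition, which leaves the quadratic form unchanged because $B_\cA$ is symmetric). The manipulation is formal at the level of integrals but is routinely justified distributionally, either by pairing with Schwartz test functions and invoking the standard dilation rule for $\cF^{-1}$, or by replacing the chirp with a Gaussian envelope $e^{-\pi\epsilon|\zeta|^2}e^{-\pi i\zeta\cdot B_\cA\zeta}$ and passing to the limit $\epsilon\to 0^+$. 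This yields $(\sigma_\cA)_t=\sigma_{\cA_t}$ and hence the stated identity $W_\cA(u(t,\cdot))(z)=W_{\cA_t}u_0(\chi_t^{-1}z)$.

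For the three-fold equivalence (i)$\Leftrightarrow$(ii)$\Leftrightarrow$(iii) under $0<p\leq 2$, $s\geq0$, I would combine the metaplectic invariance of modulation spaces with Theorem \ref{Teorema2}. The equivalence (i)$\Leftrightarrow$(ii) follows from Corollary \ref{Cor3.14}, giving $u(t,\cdot)=\mu(\chi_t)u_0\in M^p_{v_s}(\rd)$ iff $u_0\in M^p_{v_s}(\rd)$, together with Theorem \ref{Teorema2} applied to $\cA$, which gives $u(t,\cdot)\in M^p_{v_s}(\rd)$ iff $W_\cA(u(t,\cdot))\in M^p_{v_s}(\rdd)$. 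For (i)$\Leftrightarrow$(iii), the matrix $B_{\cA_t}$ is symmetric (as a congruence of the symmetric $B_\cA$), so by inverting the correspondence $\cA\mapsto B_\cA$ in Proposition \ref{C4covprop} (see \eqref{aggiunta3}) one obtains a covariant matrix $\cA_t\in Sp(2d,\bR)$ whose Cohen kernel, by Theorem \ref{Thaggiunta1}, is exactly $\sigma_{\cA_t}$; then Theorem \ref{Teorema2} applied to $\cA_t$ gives $u_0\in M^p_{v_s}(\rd)$ iff $W_{\cA_t}u_0\in M^p_{v_s}(\rdd)$.

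The main technical obstacle is the distributional change of variables inside $\cF^{-1}$, since the chirp is merely tempered and not integrable: the manipulation must be carried out against Schwartz test functions or by Gaussian regularization and passage to the limit. A secondary bookkeeping point is the existence of a covariant $\cA_t\in Sp(2d,\bR)$ realizing $\sigma_{\cA_t}$, which reduces to observing that every symmetric $2d\times 2d$ matrix arises as $B_{\cA'}$ for some covariant $\cA'$; this is immediate from the block form \eqref{aggiunta3} by solving for $A_{11}$, $A_{13}=A_{13}^T$ and $A_{21}=A_{21}^T$ in terms of the blocks of the prescribed symmetric matrix.
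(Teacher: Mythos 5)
Your proposal is correct and follows essentially the same route as the paper: the identity \eqref{App2} is obtained from the dilation property of the (inverse) Fourier transform applied to the chirp kernel, using $\det\chi_t=1$ and the congruence $B_\cA\mapsto(\chi_t^{-1})^TB_\cA\chi_t^{-1}$, and the equivalence of (i)--(iii) is reduced to Theorem \ref{Teorema2} (with Corollary \ref{Cor3.14} for metaplectic invariance). You actually supply two details the paper leaves implicit — the distributional justification of the change of variables and the observation that every symmetric $B_{\cA_t}$ is realized by a covariant matrix via \eqref{aggiunta3} — both of which are correct and welcome.
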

\begin{proof}
We use the dilation properties of the \ft. In fact, $$\cF^{-1}\left(e^{-\pi i \zeta\cdot B_{\cA_t}\zeta }\right)(\chi_t z)=\cF^{-1}\left(e^{-\pi i \chi_t^{-1}\zeta\cdot B_{\cA_t}\chi_t^{-1}\zeta }\right)( z)=\cF^{-1}\left(e^{-\pi i \zeta\cdot (\chi_t^{-1})^T B_{\cA_t}\chi_t^{-1}\zeta }\right)( z)$$
(recall that  $\det \chi_t=1$). The equivalence of (i), (ii) and (iii) follows from Theorem \ref{Teorema2}.
\end{proof}

\begin{proposition}\label{Prop4.5}
	Under the hypotheses  of Proposition \ref{simpl-cov}, if  we assume  $\cA$ shift-invertible  then $\cA_t$ is shift-invertible. We have the equivalence of conditions for $1\leq p\leq 2$, $s\geq0$: 
	\begin{itemize}
		\item [(i)] $u_0\in M^p_{v_s}(\rd)$
		\item[(ii)]$W_{\cA}(u(t,\cdot))\in L^p_{v_s}(\rdd)$
		\item[(iii)] $W_{\cA_t}u_0 \in L^p_{v_s}(\rdd)$.
	\end{itemize}
\end{proposition}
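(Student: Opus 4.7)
The plan is to split the proof into two halves: first, transfer shift-invertibility from $\cA$ to $\cA_t$; then extract the equivalences (i)--(iii) by stacking Corollary \ref{Cor3.14} and Corollary \ref{mainEcor} applied to each of the two covariant, shift-invertible matrices.

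\textbf{Step 1 (shift-invertibility transfer).} I would start from the identity $E_\cA J + \tfrac12 J = B_\cA$ of Proposition \ref{E-B}, applied both to $\cA$ and $\cA_t$, so that $E_{\cA_t} J = B_{\cA_t} - \tfrac12 J$. Using the formula $B_{\cA_t} = (\chi_t^{-1})^T B_\cA \chi_t^{-1}$ from Proposition \ref{simpl-cov} together with the symplectic invariance $(\chi_t^{-1})^T J \chi_t^{-1} = J$ (valid since $\chi_t \in Sp(d,\bR)$), the constant $\tfrac12 J$ is absorbed into the conjugation, giving
$$E_{\cA_t} J \;=\; (\chi_t^{-1})^T \bigl(B_\cA - \tfrac12 J\bigr) \chi_t^{-1} \;=\; (\chi_t^{-1})^T E_\cA J \chi_t^{-1}.$$
A second symplectic identity, $J\chi_t^{-1}J^{-1} = \chi_t^T$, then yields $E_{\cA_t} = (\chi_t^{-1})^T E_\cA \chi_t^T$. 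Since $\chi_t$ is invertible, $E_{\cA_t} \in GL(2d,\bR)$ iff $E_\cA \in GL(2d,\bR)$, so $\cA_t$ is shift-invertible.

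\textbf{Step 2 (equivalences).} Knowing that both $\cA$ and $\cA_t$ are covariant and shift-invertible, I would apply Corollary \ref{mainEcor} twice. First, to the pair $(u(t,\cdot), \cA)$: this yields $u(t,\cdot)\in M^p_{v_s}(\rd) \Leftrightarrow W_\cA(u(t,\cdot)) \in L^p_{v_s}(\rdd)$. To bridge with the initial datum I would invoke Corollary \ref{Cor3.14} for the metaplectic operator $\mu(\chi_t)$, which gives $u_0 \in M^p_{v_s}(\rd) \Leftrightarrow \mu(\chi_t) u_0 = u(t,\cdot) \in M^p_{v_s}(\rd)$. Chaining these two equivalences produces (i) $\Leftrightarrow$ (ii). Then a direct application of Corollary \ref{mainEcor} to the pair $(u_0, \cA_t)$ gives (i) $\Leftrightarrow$ (iii).

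\textbf{Main obstacle.} Once Step 1 is in place, Step 2 is essentially bookkeeping. The technical heart is the algebraic manipulation of Step 1, which expresses $E_{\cA_t}$ as a twisted conjugation of $E_\cA$ by $\chi_t$. The precise form of this conjugation is dictated by the symplectic identities for $\chi_t$ and by the fact that $B_\cA - \tfrac12 J$ differs from $B_\cA$ by a quantity preserved under the $\chi_t$-action; verifying these identities cleanly is the only delicate point.
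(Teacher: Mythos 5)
Your proposal is correct and follows essentially the same route as the paper: the paper also derives $E_{\cA_t}=(\chi_t^{-1})^T E_\cA\,((\chi_t^{-1})^T)^{-1}$ from the identity $E_\cA J+\tfrac12 J=B_\cA$ of Proposition \ref{E-B} combined with $B_{\cA_t}=(\chi_t^{-1})^TB_\cA\chi_t^{-1}$ and the symplectic relations for $\chi_t$, and then concludes the equivalences from Corollary \ref{mainEcor}. Your Step 2 merely makes explicit (via Corollary \ref{Cor3.14} applied to $\mu(\chi_t)$) the bookkeeping that the paper leaves implicit.
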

\begin{proof}
For every $t\in\bR$, the relation between $B_{\cA_t}$ and $ E_{\cA_t}$ is given by \eqref{E-B-eq}, so that 
$$ E_{\cA_t}=B_{\cA_t} J^{-1} -\frac12 I_{d\times d}.$$
 Since $B_{\cA_t}=(\chi^{-1})^TB_\cA \chi^{-1}_t,$ we can view the matrix $ E_{\cA_t}$ in terms of the matrix $E_\cA$ as follows: 
\begin{align*}
E_{\cA_t}&=B_{\cA_t}J^{-1}-\frac12 I_{d\times d}\\
&=(\chi_t^{-1})^TB_\cA \chi^{-1}_t J^{-1}-\frac12 I_{d\times d}\\
%&=(\chi_t^{-1})^T\left(E_\cA +\frac12 I_{d\times d} \right)((\chi^{-1}_t)^T)^{-1}J J^{-1}-\frac12 I_{d\times d}\\
&=(\chi_t^{-1})^T\left(E_\cA +\frac12 I_{d\times d} \right)J\chi^{-1}_t J^{-1}-\frac12 I_{d\times d}\\
&=(\chi_t^{-1})^T\left(E_\cA +\frac12 I_{d\times d} \right)((\chi^{-1}_t)^T)^{-1}J J^{-1}-\frac12 I_{d\times d}\\
&=(\chi_t^{-1})^T E_\cA ((\chi^{-1}_t)^T)^{-1}+\frac12 I_{d\times d}-\frac12 I_{d\times d}\\
&=(\chi_t^{-1})^T E_\cA ((\chi^{-1}_t)^T)^{-1}.
\end{align*}
Since $(\chi_t^{-1})^T$ is invertible, $E_{\cA_t}$ is invertible if and only if $E_\cA$ is.  The equivalence of (i), (ii) and (iii) follows from Corollary \ref{mainEcor}.
\end{proof}

Observe that the previous result does not require the assumption \eqref{metapf2dil}.

\textbf{Example: The free particle.}
Consider the Cauchy problem for the
Schr\"odinger equation
\begin{equation}\label{cp}
\begin{cases}
i\partial_t u+\Delta u=0\\
u(0,x)=u_0(x),
\end{cases}
\end{equation}
with $x\in\R^d$, $d\geq1$. The  explicit formula for the solution $u(t,x)= e^{i t \Delta}u_0(x)$ is
\begin{equation}\label{sol}
u(t,x)=(K_t\ast u_0)(x),
\end{equation}
where
\begin{equation}\label{chirp0}
K_t(x)=\frac{1}{(4\pi i t)^{d/2}}e^{i|x|^2/(4t)}.
\end{equation}
The canonical transformation $\chi_t$ is given by
\begin{equation}\label{part-lib}
\chi_t(y,\eta)=(y+4\pi t \eta,\eta)=\left(\begin{array}{cc}
I_{d\times d}&(4\pi t ) I_{d\times d}\\
0_{d\times d} &  I_{d\times d}
\end{array}\right)\left(\begin{array}{c}
y\\
\eta
\end{array}\right), 
\end{equation}
so that 
$$\chi_t^{-1}=\left(\begin{array}{cc}
I_{d\times d}& - (4\pi t ) I_{d\times d}\\
0_{d\times d} &  I_{d\times d}
\end{array}\right).
$$
We may apply Proposition \ref{simpl-cov} with $B_{\cA_t}$ and $\cA_t$ defined consequently. Assuming further shift-invertibility, we may apply Proposition \ref{Prop4.5} as well.  It is clear, in this context, that starting  with a symplectic matrix $\cA$ of the type \eqref{metapf2dil} does not guarantee that the new matrix  $\cA_t$ in Proposition \ref{simpl-cov}  satisfies condition \eqref{metapf2dil}.
In fact, applying \eqref{part-lib} to the matrix $B_\cA$ in \eqref{aggiunta3-FL}, we obtain 
$$B_{\cA_t}=(\chi_t^{-1})^TB_\cA \chi_t^{-1}=\left(\begin{array}{cc}
0_{d\times d}& \frac12 I_{d\times d}-A_{11}\\
\frac12 I_{d\times d}-A_{11}^T &  (4\pi t)(A_{11}+A_{11}^T-I_{d\times d})
\end{array}\right).$$
The matrix $B_{\cA_t}$  is  of the type \eqref{aggiunta3-FL} if and only if   
\begin{equation}\label{conserv-A}
A_{11}+A_{11}^T=I_{d\times d},
\end{equation}
hence if the previous condition is not fulfilled  $\cA_t$ is not of the type \eqref{metapf2dil}.  

We test condition \eqref{conserv-A} on the 
the $\tau$-Wigner representations, for any $\tau\in\bR$ and with ${\bf \cA }_\tau$ defined in \eqref{Aw-tau}. In this case  $A_{11}+A_{11}^T=2(1-\tau)I_{d\times d}$ and we obtain condition \eqref{conserv-A} if and only if $\tau=1/2$ (the expected Wigner case).
By a direct computation: 
\begin{equation}\label{FP1}
W_\tau u(t,x,\xi)=W_{\tau,t} u_0(x-4\pi t \xi,\xi),
\end{equation}
where the representation $W_{\tau,t}$ is of Cohen class:
\begin{equation}\label{C28}
W_{\tau,t}f=Wf\ast\sigma_{\tau,t},
\end{equation}
with
\begin{equation}\label{C29}
\sigma_{\tau,t}(y,\eta)=\sigma_\tau(\chi_t(y,\eta))=\sigma_\tau(y+4\pi t \eta, \eta),
\end{equation}
and
\begin{equation*}
\sigma_\tau \phas= 
\begin{cases}
\frac{2^d}{|2\tau -1|^d}e^{2\pi i\frac{2}{2\tau -1}x \xi} &\tau \neq \frac{1}{2}\\
&\\
\delta & \tau =\frac{1}{2},
\end{cases}
\end{equation*}
cf.  Proposition 1.3.27 in \cite{Elena-book}.\par
 We may write $W_{\tau,t}$ in the form of an $\cA_t$-Wigner representation, with 
\begin{equation}\label{C30}
\mu(\cA_t) F(x,\xi)= \intrd e^{-2\pi i (y\xi+2\pi t(1-2\tau)y^2)}F(x+\tau y,x-(1-\tau)y)\,dy.
\end{equation}
\begin{definition}\label{4.6}
	For $\cA\in Sp(2d,\bR)$, $f\in\cS'(\rd)$, $0<p<\infty$, $s\geq0$, we say that $z_0=(x_0,\xi_0)\notin\mathcal{W}\cF^{p,s}_\cA(f) $, $z_0\not=0$, if there exists $\Gamma_0$, conic neighbourhood of $z_0$, such that 
	\begin{equation}\label{26Aripet}
	\int_{\Gamma_{z_0}} \la z\ra^{ps} |W_{\cA}f(z)|^p\,dz<\infty.
	\end{equation}
\end{definition}
The wave front set $\mathcal{W}\cF^{p,s}_\cA(f) $ is a closed cone in $\rdd\setminus\{0\}$.

In our context, it will be convenient to limit the definition to shift-invertible matrices $\cA$ and $1\leq p\leq2$. 
\begin{proposition}\label{4.7}
	In the preceding Definition \ref{4.6} assume $f\in L^p(\rd)$, $1\leq p\leq 2$, $s\geq0$ and let $\cA$ be shift-invertible. Then $\mathcal{W}\cF^{p,s}_\cA(f)=\emptyset$ if and only if $f\in M^p_{v_s}(\rd)$.
\end{proposition}
\begin{proof}
	Under such assumptions, from Corollary \ref{mainEcor} we have that $f\in M^p_{v_s}(\rd)$ if and oly if $W_\cA f \in L^p_{v_s}(\rdd)$. So, if $f\in M^p_{v_s}(\rd)$ then \eqref{26Aripet} is satisfied in every cone $\Gamma_{z_0}$, for all $z_0\not=0$, hence $ \mathcal{W}\cF^{p,s}_\cA(f)=\emptyset$. In the opposite direction, assume  $ \mathcal{W}\cF^{p,s}_\cA(f)=\emptyset$, that is \eqref{26Aripet} is satisfied for a suitable conic neighbourhood $\Gamma_{z_0}$ of any $z_0\not=0$.  From the compactness of the sphere $\mathbb{S}^{2d-1}$ we deduce that the integral \eqref{26Aripet} is convergent over the whole $\rdd$, i.e., $W_\cA f \in L^2_{v_s}(\rdd)$. This completes the proof. 
\end{proof}

Assuming further that $\cA$ is covariant, we consider the Schr\"{o}dinger equation \eqref{C12} and define the covariant matrix $\cA_t$, $t\in\bR$, as in Proposition \ref{simpl-cov}. From Proposition \ref{Prop4.5} we have that, if $\cA$ is shift-invertible, so is $\cA_t$, for all $t\in\bR$.
\begin{theorem}\label{4.8}
Assume $u_0\in\lrd$. Let $u(t,\cdot)\in \lrd$, $t\in\bR$,  be the solution of \eqref{C12}. Let $\cA$ be covariant and shift-invertible. Then, for $1\leq p\leq 2$, $s\geq0$:
\begin{equation}\label{26Brip}
\mathcal{W}\cF^{p,s}_{\cA}(u(t,\cdot))=\chi_t(\mathcal{W}\cF^{p,s}_{\cA_t}(u_0)). 
\end{equation}
\end{theorem} 
\begin{proof}
	Assume $\zeta_0\not=\mathcal{W}\cF^{p,s}_{\cA}(u_0)$, i.e., there exists $\Lambda_{\zeta_0}$, conic neighbourhood of $\zeta_0$, such that
	\begin{equation}\label{(111)}
	\int_{\Lambda_{\zeta_0}} \la \zeta\ra^{ps} |W_{\cA_t} (u_0)(\zeta)|^pd\zeta<\infty.
	\end{equation}
\end{proof}

Observe that $\Gamma_{z_0}=\chi_t^{-1}(\Lambda_{\zeta_0})$ is a conic neighbourhood of $z_0$. We have, by applying \eqref{App2} and setting $z=\chi_t(\zeta)$:
\begin{align*}
\int_{\Gamma_{z_0}} \la z\ra^{ps} |W_\cA(u(t,\cdot))(z)|^p dz &= \int_{\Gamma_{z_0}} \la z\ra^{ps} |W_{\cA_t}(u_0)(\chi_t^{-1}z)|^p dz\\
&=\int_{\Lambda_{\zeta_0}} \la \chi_t \zeta\ra^{ps} |W_{\cA_t}(u_0)(\zeta)|^p  d\zeta<\infty,
\end{align*}
since 
$\la \chi_t \zeta\ra^{ps}\asymp \la \zeta\ra^{ps}$, and we can apply \eqref{(111)}. Hence $z_0=\chi_{t}\zeta_0\notin  \mathcal{W}\cF^{p,s}_{\cA}(u(t,\cdot))$. Arguing similarly in the opposite direction, we obtain \eqref{26Brip}.

\section*{Acknowledgements}
The authors have been supported by the Gruppo Nazionale per l’Analisi Matematica, la Probabilità e le loro Applicazioni (GNAMPA) of the Istituto Nazionale di Alta Matematica (INdAM).

\end{document}